\newtheorem{theorem}{Theorem}[section]
\newtheorem{lemma}[theorem]{Lemma}
\newtheorem{corollary}[theorem]{Corollary}
\newtheorem{problem}[theorem]{Problem}
\theoremstyle{definition}
\newtheorem{definition}[theorem]{Definition}
\newtheorem{example}[theorem]{Example}
\theoremstyle{remark}
\numberwithin{equation}{section}
\newcommand{\mmod}[1]{\,\,({\rm{mod}}\,\,#1)}
\def\bfh{{\mathbf h}}
\def\bfm{{\mathbf m}}
\def\grm{{\mathfrak m}}
\def\bet{{\beta}}  
 \def\Del{{\Delta}}
\def\d{{\partial}}
\def\le{\leqslant} \def\ge{\geqslant}
\def\d{{\,{\rm d}}}
\begin{document}
\title[Subconvex $L^p$-sets]{Subconvex $L^p$-sets, Weyl's inequality,\\ and equidistribution}
\author[Trevor D. Wooley]{Trevor D. Wooley}
\address{Department of Mathematics, Purdue University, 150 N. University Street, West 
Lafayette, IN 47907-2067, USA}
\email{twooley@purdue.edu}
\subjclass[2020]{11L07, 11L15, 11J71, 11P55, 42A32}
\keywords{Weyl sums, equidistribution, Hardy-Littlewood method}
\thanks{Work on this paper was conducted while the author was supported in part by NSF grants DMS-2001549 
and DMS-2502625, and Simons Fellowship in Mathematics SFM-00011955.}
\date{}

\begin{abstract} We examine sets $\mathscr A$ of natural numbers having the property that for some real number 
$p\in (0,2)$, one has the subconvex bound
\[
\int_0^1 \Bigl| \sum_{n\in \mathscr A\cap [1,N]}e(n\alpha)\Bigr|^p\d \alpha \ll N^{-1}|\mathscr A\cap [1,N]|^p.
\]
We show that exponential sums over such sets satisfy inequalities analogous to Weyl's inequality, and in many 
circumstances of the same strength as classical versions of Weyl's bound. We also examine equidistribution of 
polynomials modulo $1$ in which the summands are restricted to these subconvex $L^p$-sets. In addition, we 
describe applications to problems involving character sums and averages of arithmetic functions.
\end{abstract}

\maketitle

\section{Introduction}
The inequality of Weyl provides non-trivial estimates for exponential sums having real polynomial arguments 
summed over all integers of an interval. In this memoir, we show that certain subsets of the natural numbers, that 
we term {\it subconvex $L^p$-sets}, satisfy analogues of Weyl's inequality losing nothing by comparison with the 
strength of these classical bounds. These bounds will facilitate applications of the Hardy-Littlewood (circle) method 
to problems involving subconvex $L^p$-sets. We illustrate our ideas by considering equidistribution results for 
sequences of polynomials modulo $1$, and averages of arithmetic functions, in which the underlying arguments are 
restricted to subconvex $L^p$-sets. In view of our new conclusions, the investigation and characterisation of 
subconvex $L^p$-sets are tasks that seem worthy of future attention.\par

We now introduce the sets central to our discussion. Throughout this memoir, when 
$\mathscr A\subseteq \mathbb N$ and $N\ge 1$, we write $\mathscr A(N)=\mathscr A\cap [1,N]$ and 
$A(N)=\text{card}(\mathscr A(N))$. As usual, we write $e(z)$ for $e^{2\pi {\rm i}z}$. Then, when $p$ is a positive real 
number, we define the mean value
\begin{equation}\label{1.1}
I_p(N;\mathscr A)=\int_0^1 \Bigl| \sum_{n\in \mathscr A(N)}e(n\alpha)\Bigr|^p\d \alpha .
\end{equation}

\begin{definition}\label{definition1.1} Suppose that $\mathscr A$ is a non-empty subset of $\mathbb N$.
\begin{enumerate}
\item[(a)] We say that $\mathscr A$ is a {\it weakly subconvex $L^p$-set} if $0<p<2$ and, for all $\varepsilon >0$ 
and all real numbers $N$ sufficiently large in terms of $p$ and $\varepsilon$, one has
\begin{equation}\label{1.2}
I_p(N;\mathscr A)\ll N^{\varepsilon -1}A(N)^p.
\end{equation}
\item[(b)] We say that $\mathscr A$ is a {\it strongly subconvex $L^p$-set} if $0<p<2$ and, for all real numbers 
$N$ sufficiently large in terms of $p$, one has
\begin{equation}\label{1.3}
I_p(N;\mathscr A)\ll N^{-1}A(N)^p.
\end{equation}
\end{enumerate}
\end{definition}

We make three simple observations concerning this definition. First, we note that any strongly subconvex $L^p$-set 
$\mathscr A$ has positive lower density. For if $0<p<2$, it follows from \eqref{1.1} via orthogonality, a trivial 
estimate, and the upper bound \eqref{1.3} that
\[
A(N)=I_2(N;\mathscr A)\le A(N)^{2-p}I_p(N;\mathscr A)\ll N^{-1}A(N)^2.
\]
Thus, we infer that $A(N)\gg N$. The lower bound $A(N)\gg N^{1-\varepsilon}$ follows in like manner from 
\eqref{1.2} for weakly subconvex $L^p$-sets $\mathscr A$.\par

Second, our use of the adjective {\it subconvex} is designed to remind the reader that the estimate \eqref{1.3} 
constitutes a bound beyond square-root cancellation, a claim justified by the observation that strongly subconvex 
$L^p$-sets have positive lower density. Indeed, were the exponential sum associated with $\mathscr A$ to exhibit 
only square-root cancellation for a set of arguments $\alpha$ inside the integral \eqref{1.1} having positive 
measure, we would have $I_p(N;\mathscr A)\gg A(N)^{p/2}\gg N^{p/2}$. Since $p/2>p-1$ when $0<p<2$, it is 
apparent that the upper bound \eqref{1.3} is in this sense subconvex. Similar observations apply, mutatis mutandis, 
to weakly subconvex $L^p$-sets.\par

Third, it is apparent that when $|\alpha|\le 1/(100N)$, one has
\[
\biggl| \sum_{n\in \mathscr A(N)}e(n\alpha)\biggr|>\tfrac{1}{2}A(N).
\]
Thus, we deduce from \eqref{1.1} that for $0<p<2$, one has $I_p(N;\mathscr A)\gg N^{-1}A(N)^p$. The 
hypothesised upper bound \eqref{1.3} is consequently the best achievable for a $p$-th moment. In this context, we 
note that when $0<p<1$, the upper bound \eqref{1.3} is irritatingly strong, since then one has $I_p(N)=o(1)$ as 
$N\rightarrow \infty$. We stress that this scenario is included within the definition solely for the sake of 
completeness. Indeed, it is a consequence of the resolution of Littlewood's problem (see Konyagin \cite{Kon1981} and 
McGehee, Pigno and Smith \cite[Corollary 1]{MPS1981}) that $I_1(N;\mathscr A)\gg \log A(N)$. Thus, if it were to be the case that 
$\mathscr A$ is a strongly subconvex $L^p$-set with $0<p\le 1$, then one would have
\[
\log A(N)\ll I_1(N;\mathscr A)\le A(N)^{1-p}I_p(N;\mathscr A)\ll N^{-1}A(N),
\]
whence $A(N)\gg N\log A(N)$. Since we may assume that $\mathscr A$ has positive lower density, we obtain a contradiction. Thus, there exist no strongly 
subconvex $L^p$-sets when $0<p\le 1$. Our focus in this memoir lies with those situations in which $1\le p<2$. It is this latter range of $p$ 
for which we describe applications herein, and for which we envision the development of a coherent theory. The 
reader will lose nothing by assuming henceforth that $p$ is constrained to lie in this range.\par

It might seem that subconvex $L^p$-sets, whether weak or strong, are subsets of the integers having such special 
properties that they should be difficult to find, and might not perhaps occur in nature. As we explain in \S\S2 and 3, 
however, examples of subconvex $L^p$-sets are abundant. The set of all natural numbers $\mathbb N$ is a trivial 
example of a weakly subconvex $L^1$-set which is also a strongly subconvex $L^p$-set whenever $p>1$. This 
follows from the well-known bounds
\[
\int_0^1 \Bigl| \sum_{1\le n\le N}e(n\alpha)\Bigr| \d\alpha \asymp \log (2N) \quad \text{and}\quad 
\int_0^1 \Bigl| \sum_{1\le n\le N}e(n\alpha)\Bigr|^p\d\alpha \ll N^{p-1}\quad (p>1).
\]
We remind the reader of these familiar bounds, and their proofs, in Theorem \ref{theorem2.1} and the ensuing 
discussion. Consider next, when $r\ge 2$, the set $\mathscr N_r$ of $r$-free numbers, defined by
\[
\mathscr N_r=\{ n\in \mathbb N: \text{$\pi^r|n$ for no prime $\pi$}\}.
\]
It transpires that $\mathscr N_r$ is a weakly subconvex $L^{1+1/r}$-set, and when $p>1+1/r$ it is a strongly 
subconvex $L^p$-set. Indeed, if we write $\mu_r(n)$ for the characteristic function of $\mathscr  N_r$, then these 
assertions follow from the work of Keil \cite[Theorem 1.2]{Kei2013} showing that
\begin{equation}\label{1.4}
N^{1/r}\log (2N)\ll \int_0^1\Bigl| \sum_{1\le n\le N}\mu_r(n)e(n\alpha)\Bigr|^{1+1/r}\d\alpha \ll N^{1/r}\log^2(2N)
\end{equation}
and
\begin{equation}\label{1.5}
\int_0^1\Bigl| \sum_{1\le n\le N}\mu_r(n)e(n\alpha)\Bigr|^p\d\alpha \asymp N^{p-1}\quad (p>1+1/r).
\end{equation}
In \S2 we exhibit other examples of subconvex $L^p$-sets, and in \S3 we show how new examples of subconvex 
$L^p$-sets may be derived from known examples.\par

We turn in \S\S4 and 5 to the topic of Weyl sums. When $k\ge 2$ and $\alpha_i\in \mathbb R$ $(0\le i\le k)$, 
consider the polynomial
\begin{equation}\label{1.6}
\psi(x;\boldsymbol \alpha)=\alpha_kx^k+\ldots +\alpha_1x+\alpha_0,
\end{equation}
and define the exponential sum
\begin{equation}\label{1.7}
\Psi_k(\boldsymbol \alpha ;N)=\sum_{1\le n\le N}e(\psi(n;\boldsymbol \alpha)).
\end{equation}
Suppose that the leading coefficient $\alpha_k$ of $\psi(x;\boldsymbol \alpha)$ has the kind of Diophantine 
approximation made available via Dirichlet's theorem, so that $a\in \mathbb Z$ and $q\in \mathbb N$ satisfy 
$(a,q)=1$ and $|\alpha_k-a/q|\le 1/q^2$. In these circumstances, it follows from Weyl's inequality (see 
\cite[Lemma 2.4]{Vau1997}, for example) that for each $\varepsilon>0$ and each large real number $N$, one has
\begin{equation}\label{1.8}
\Psi_k(\boldsymbol \alpha;N)\ll N^{1+\varepsilon }(q^{-1}+N^{-1}+qN^{-k})^{2^{1-k}}.
\end{equation}
When the variables $n$ defining the summation in \eqref{1.7} are restricted to lie in arithmetically interesting 
subsequences of the integers, such as the prime numbers or squarefree numbers, the strength of available 
estimates analogous to \eqref{1.8} typically degrades substantially. Our first result on Weyl's inequality shows that 
no significant degradation need be tolerated for such subsequences as those defined by weakly subconvex 
$L^p$-sets, at least when $k\ge 3$ and $1\le p\le 4/3$.

\begin{theorem}\label{theorem1.2}
Suppose that $\mathscr A$ is a weakly subconvex $L^p$-set for some real number $p$ with $1\le p\le 4/3$, and 
$k\ge 3$. Let $(\alpha_0,\alpha_1,\ldots ,\alpha_k)\in \mathbb R^{k+1}$, and suppose that $a\in \mathbb Z$ and 
$q\in \mathbb N$ satisfy $(a,q)=1$ and $|\alpha_k-a/q|\le 1/q^2$. Then, for each $\varepsilon>0$ and each large 
real number $N$, one has
\begin{equation}\label{1.9}
\sum_{n\in \mathscr A(N)}e(\alpha_kn^k+\ldots +\alpha_1n+\alpha_0)\ll 
N^{1+\varepsilon}(q^{-1}+N^{-1}+qN^{-k})^{2^{1-k}}.
\end{equation}
\end{theorem}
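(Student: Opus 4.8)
The plan is to use Fourier inversion to convert the restricted sum into an integral against a complete Weyl sum, and then to exploit the subconvex hypothesis through H\"older's inequality. Writing $f(\beta)=\sum_{m\in\mathscr A(N)}e(m\beta)$, the orthogonality relation shows that $\int_0^1 f(\beta)e(-n\beta)\d\beta$ recovers the indicator function of $\mathscr A(N)$ at the integer $n$. Substituting this for the condition $n\in\mathscr A(N)$ in the sum $S$ on the left of \eqref{1.9} and interchanging summation and integration, I would obtain
\[
S=\int_0^1 f(\beta)g(\beta)\d\beta,\qquad g(\beta)=\sum_{1\le n\le N}e(\psi(n;\boldsymbol\alpha)-n\beta).
\]
The crucial feature is that $g(\beta)$ is itself a complete Weyl sum of degree $k$ whose leading coefficient is still $\alpha_k$, since $\beta$ perturbs only the linear coefficient; thus the classical bound \eqref{1.8} applies uniformly in $\beta$, giving $\|g\|_\infty\ll W$, where I abbreviate $W=N^{1+\varepsilon}(q^{-1}+N^{-1}+qN^{-k})^{2^{1-k}}$.

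Write $p'=p/(p-1)$ for the conjugate exponent, so that the range $1\le p\le 4/3$ corresponds to $4\le p'\le\infty$. H\"older's inequality yields $|S|\le\|f\|_p\|g\|_{p'}$. The weak subconvexity hypothesis \eqref{1.2}, together with the trivial bound $A(N)\le N$, gives $\|f\|_p\ll N^{1-1/p+\varepsilon}$. It therefore remains to prove that $\|g\|_{p'}\ll WN^{-1/p'}$, for then the identity $1/p+1/p'=1$ delivers $|S|\ll N^{\varepsilon}W$, which is \eqref{1.9} after a harmless adjustment of $\varepsilon$.

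The core of the argument is consequently a fourth-moment estimate for the complete Weyl sum $g$. Opening the fourth power and integrating, I would find that
\[
\int_0^1|g(\beta)|^4\d\beta=\sum_{\substack{1\le n_i\le N\\ n_1+n_3=n_2+n_4}}e\bigl(\psi(n_1)+\psi(n_3)-\psi(n_2)-\psi(n_4)\bigr),
\]
and the substitution $n_2=n_1+h$, $n_4=n_3-h$ factorises this as $\sum_{|h|<N}|U_h|^2$, where $U_h=\sum_n e(\psi(n;\boldsymbol\alpha)-\psi(n+h;\boldsymbol\alpha))$ is a Weyl sum of degree $k-1$ with leading coefficient the fixed multiple $-k\alpha_k h$ of $\alpha_k$. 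Running the Weyl differencing process on $\sum_{|h|<N}|U_h|^2$, that is, establishing the mean-square analogue of Weyl's inequality one degree lower, I expect the bound $\int_0^1|g|^4\d\beta\ll N^{3+\varepsilon}(q^{-1}+N^{-1}+qN^{-k})^{2^{3-k}}=W^4N^{-1}$. Granting this, interpolation against the uniform bound gives, for every finite $p'\ge4$,
\[
\int_0^1|g|^{p'}\d\beta\le\|g\|_\infty^{p'-4}\int_0^1|g|^4\d\beta\ll W^{p'-4}\cdot W^4N^{-1}=W^{p'}N^{-1},
\]
which is exactly the required estimate $\|g\|_{p'}\ll WN^{-1/p'}$ (the endpoint $p'=\infty$, corresponding to $p=1$, being the direct case in which one uses $\|g\|_\infty\ll W$ and $\|f\|_1\ll N^{\varepsilon}$).

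The main obstacle is precisely this fourth-moment bound, which is a genuine piece of harmonic analysis rather than a formal consequence of \eqref{1.8}: summing the one-variable Weyl estimates for the individual $U_h$ over $h$ squanders the averaging, so one must retain the full differencing structure and count solutions as in the classical proofs of Weyl's and Hua's inequalities. The hypotheses $k\ge3$ and $p\le4/3$ enter here in tandem. The restriction $p\le4/3$ guarantees $p'\ge4$, so that only the fourth moment is required, this being the lowest moment of $g$ exhibiting genuine cancellation, the second moment satisfying merely $\int_0^1|g|^2\d\beta\ll N$; and the restriction $k\ge3$ ensures that the sums $U_h$ have degree at least two, affording the extra differencing room without which, as the case $k=2$ already illustrates, the exponent $2^{3-k}$ in the fourth-moment bound cannot be reached.
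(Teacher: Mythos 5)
Your proposal is correct and follows essentially the same route as the paper: both write the restricted sum by orthogonality as $\int_0^1 G_k(\boldsymbol \alpha,\beta)\,g(-\beta)\d\beta$ (your $g$ and $f$, respectively), apply H\"older to exploit the subconvex hypothesis, and reduce the whole matter to the fourth-moment bound $\int_0^1|G_k(\boldsymbol \alpha,\beta)|^4\d\beta \ll N^{3+\varepsilon}(q^{-1}+N^{-1}+qN^{-k})^{2^{3-k}}$, established exactly as you indicate by double Weyl differencing (your factorisation $\sum_{|h|<N}|U_h|^2$ is identical to the paper's double-differenced expression) followed by $k-3$ further differencing steps and the standard reciprocal-sum count. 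The only cosmetic difference is the H\"older bookkeeping: the paper fixes exponents $(4/3,4)$ and raises the subconvex $L^p$ estimate to $L^{4/3}$ via the trivial bound on the set's exponential sum, whereas you use conjugate exponents $(p,p')$ and interpolate $\|G_k\|_{p'}$ between the fourth moment and the pointwise Weyl bound---equivalent maneuvers resting on the same two ingredients.
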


A comparison of \eqref{1.8} and \eqref{1.9} will assure the reader that no degradation has occurred here 
accompanying our restriction of the set of summands to the subconvex $L^p$-set $\mathscr A$. Theorem 
\ref{theorem1.2} is in fact a corollary of a more general conclusion applicable also to the situations in which 
$k\ge 2$ and $\mathscr A$ is any subconvex $L^p$-set with $1\le p<2$. In order to describe this result, we 
introduce the exponents
\[
\sigma_p(k)=\begin{cases} \frac{1}{p}-\frac{1}{2},&\text{when $k=2$,}\\
2^{3-k}\bigl( \frac{1}{p}-\frac{1}{2}\bigr),&\text{when $k\ge 3$ and $4/3<p<2$,}\\
2^{1-k},&\text{when $k\ge 3$ and $1\le p\le 4/3$,}
\end{cases}
\]
and
\[
\tau_p(k)=\begin{cases}\bigl( \frac{2}{p}-1\bigr) \frac{1}{k^2-k-2},&\text{when $k\ge 3$ and 
$\frac{k^2-k}{k^2-k-1}<p<2$,}\\
\frac{1}{k(k-1)},&\text{when $k\ge 3$ and $1\le p\le \frac{k^2-k}{k^2-k-1}$,}\\
0,&\text{when $k=2$.}\end{cases}
\]

\begin{theorem}\label{theorem1.3}
Suppose that $\mathscr A$ is a weakly subconvex $L^p$-set for some real number $p$ with $1\le p<2$, and 
$k\ge 2$. Let $(\alpha_0,\alpha_1,\ldots ,\alpha_k)\in \mathbb R^{k+1}$, and suppose that $a\in \mathbb Z$ and 
$q\in \mathbb N$  satisfy $(a,q)=1$ and $|\alpha_k-a/q|\le 1/q^2$. Then, for each $\varepsilon>0$ and each large 
real number $N$, one has
\[
\sum_{n\in \mathscr A(N)}e(\alpha_kn^k+\ldots +\alpha_1n+\alpha_0)\ll 
N^{1+\varepsilon}(q^{-1}+N^{-1}+qN^{-k})^{\omega_p(k)},
\]
where $\omega_p(k)=\max\{ \sigma_p(k),\tau_p(k)\}$.
\end{theorem}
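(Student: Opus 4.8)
The plan is to detect membership in $\mathscr{A}$ through the generating function of the set itself, and thereby transfer the problem to a mean value of an \emph{unrestricted} Weyl sum whose leading coefficient is preserved. Writing $\Delta=q^{-1}+N^{-1}+qN^{-k}$, I would set
\[
g(\beta)=\sum_{n\in\mathscr{A}(N)}e(n\beta),\qquad F(\beta)=\sum_{1\le n\le N}e\bigl(\psi(n;\boldsymbol\alpha)-\beta n\bigr).
\]
Since $\int_0^1 g(\beta)e(-n\beta)\d\beta$ is the indicator of $\mathscr{A}(N)$ on $[1,N]\cap\mathbb{Z}$, orthogonality gives the identity $\sum_{n\in\mathscr{A}(N)}e(\psi(n;\boldsymbol\alpha))=\int_0^1 g(\beta)F(\beta)\d\beta$. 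The decisive structural point is that $F$ is a classical Weyl sum over the \emph{full} interval whose degree-$k$ coefficient is still $\alpha_k$; only the linear coefficient is displaced by $\beta$. Hence the hypothesis $|\alpha_k-a/q|\le q^{-2}$ applies to $F(\beta)$ \emph{uniformly} in $\beta$, and both the classical Weyl bound and its Vinogradov-strength refinement furnish $\sup_\beta|F(\beta)|\ll N^{1+\varepsilon}\Delta^{\kappa}$ with $\kappa=2^{1-k}$ and $\kappa=1/(k(k-1))$ respectively.

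An application of Hölder's inequality yields
\[
\Bigl|\sum_{n\in\mathscr{A}(N)}e(\psi(n;\boldsymbol\alpha))\Bigr|\le I_p(N;\mathscr{A})^{1/p}\Bigl(\int_0^1|F(\beta)|^{p'}\d\beta\Bigr)^{1/p'},\qquad \tfrac1p+\tfrac1{p'}=1,
\]
and \eqref{1.2} with $A(N)\le N$ bounds the first factor by $N^{1+(\varepsilon-1)/p}$. I would estimate the second factor (note $p'\ge2$) by interpolating the $L^{p'}$-norm of $F$ between the exact identity $\int_0^1|F(\beta)|^2\d\beta=N$ and a single higher even-moment anchor $\int_0^1|F(\beta)|^{2s_0}\d\beta\ll N^{2s_0-1+\varepsilon}\Delta^{c_0}$. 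Because each of these obeys the scaling $\|F\|_t\ll N^{1-1/t+\varepsilon}\Delta^{\cdots}$, the $N$-powers conspire so that the bound for the restricted sum is always $N^{1+\varepsilon}\Delta^{c_0(1/p-1/2)/(s_0-1)}$; the two choices $(2s_0,c_0)=(4,2^{3-k})$ and $(2s_0,c_0)=(k(k-1),1)$ return precisely $\sigma_p(k)$ and $\tau_p(k)$ on the ranges where the interpolation parameter lies in $[0,1]$, and I would retain the larger. The complementary low-$p$ ranges, where each exponent is capped at $2^{1-k}$ and $1/(k(k-1))$, follow by first observing that a weakly subconvex $L^p$-set is also a weakly subconvex $L^{p_1}$-set for every $p_1\in[p,2)$ — immediate from \eqref{1.2} on interpolating against $|g(\beta)|\le A(N)$ — and then applying the argument at the anchor value of $p$. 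The quadratic case is cleanest: here $\int_0^1|F|^4\d\beta\ll N^{3+\varepsilon}\Delta$ directly and $\tau_p(2)=0$, so $\sigma_p(2)=1/p-1/2$ is the whole story.

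The two anchor estimates are the substance of the proof. The first, $\int_0^1|F|^4\d\beta\ll N^{3+\varepsilon}\Delta^{2^{3-k}}$, I would prove by a single Weyl differencing: with $|F|^2=\sum_{|h|<N}e(-h\beta)K(h)$ and $K(h)=\sum_n e(\psi(n;\boldsymbol\alpha)-\psi(n-h;\boldsymbol\alpha))$ independent of $\beta$, Parseval collapses $\int_0^1|F|^4\d\beta$ to $\sum_{|h|<N}|K(h)|^2$, after which the classical Weyl inequality for the degree-$(k-1)$ sums $K(h)$ with standard divisor-sum estimates in $h$ delivers the exponent $2^{3-k}$. The essential gain is that this genuine fourth moment beats the crude $\|F\|_\infty^2\|F\|_2^2$ by exactly a factor of two in the exponent of $\Delta$, which is what prevents any loss relative to the classical bound.

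The second anchor, $\int_0^1|F|^{k(k-1)}\d\beta\ll N^{k(k-1)-1+\varepsilon}\Delta$, is where I expect the main difficulty, and here I would deploy the sharp form of Vinogradov's mean value theorem. The obstacle is structural: the integral averages the linear coefficient only, whereas Vinogradov's system constrains all $k$ power sums, and the naive inflation of this one-dimensional slice to the full $k$-dimensional Vinogradov integral recovers the correct power of $N$ but discards the Diophantine information about $\alpha_k$, yielding only $\Delta^{(k^2-k-2)/(k^2-k)}$ in place of the required $\Delta^1$. Closing this gap — extracting the full saving $\Delta^1$ from $|\alpha_k-a/q|\le q^{-2}$ while keeping the sharp mean-value count — is the crux. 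I would approach it through the differencing identity $\int_0^1|F|^{k(k-1)}\d\beta=\sum_{h_1+\cdots+h_{s_0}=0}\prod_i K(h_i)$, with $s_0=k(k-1)/2$, estimating the degree-$(k-1)$ sums $K(h_i)$ collectively via the mean value theorem and harvesting the leading-coefficient saving from the frequencies $k\alpha_k h_i$.
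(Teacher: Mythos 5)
Your reduction is exactly the paper's: the orthogonality identity $\sum_{n\in\mathscr A(N)}e(\psi(n;\boldsymbol\alpha))=\int_0^1g(\beta)F(\beta)\d\beta$ is \eqref{4.4}, your H\"older step is \eqref{4.9}, and your interpolation of the $L^{p'}$-norm of $F$ between the exact second moment and the two even-moment anchors $(2s_0,c_0)=(4,2^{3-k})$ and $(k(k-1),1)$ is equivalent to the paper's three-factor H\"older applications \eqref{4.12} and \eqref{5.8}; together with the monotonicity of weak subconvexity in $p$ (which the paper also uses), this reproduces the case analysis of Lemmata \ref{lemma4.1} and \ref{lemma5.1} and yields the correct exponents $\sigma_p(k)$ and $\tau_p(k)$ on the correct ranges. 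The gaps lie in the two anchor moment estimates, which you rightly call the substance of the proof, and neither of your sketches closes them. For the first anchor, applying the \emph{finished} Weyl inequality to each degree-$(k-1)$ sum $K(h)$ and then summing over $h$ fails for $k\ge 4$, for two reasons. First, the hypothesis $|\alpha_k-a/q|\le q^{-2}$ does not furnish Dirichlet-quality approximations to the leading coefficients $kh\alpha_k$: the fraction $kha/q$ reduces to denominator $q/\gcd(kh,q)$ but carries error only $O(|h|q^{-2})$, generically far larger than the square of the reciprocal of that denominator, so Weyl's inequality cannot be invoked per $h$ with data tied to $q$. Second, and more structurally, even the optimal per-$h$ information of this type, namely $|K(h)|^{2^{k-2}}\ll N^{2^{k-2}-(k-2)}\Sigma_h$ with $\sum_{h}\Sigma_h\ll N^{k+\varepsilon}(q^{-1}+N^{-1}+qN^{-k})$, is consistent with $\Sigma_h$ being of uniform size $N^{k-1+\varepsilon}\Delta$ for every $h$, in which case the best available bound is $\sum_h|K(h)|^2\ll N^{3+2^{3-k}+\varepsilon}\Delta^{2^{3-k}}$ --- a loss of $N^{2^{3-k}}$ over what you need, which is fatal away from the regime $\Delta\asymp N^{-1}$. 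The correct continuation after your Parseval identity is not per-$h$ Weyl but further differencing of the full sum over $(h_1,h_2,n)$, with Cauchy--Schwarz applied to the whole expression and a single reciprocal-sum estimate \cite[Lemma 2.2]{Vau1997} at the very end; this is how the paper proceeds from \eqref{4.7}, following \cite[Lemma 2.3]{Vau1997}.

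The second anchor is a genuinely missing idea, not a deferred detail: you correctly identify the crux (extracting the full saving $\Delta^{1}$ while retaining the sharp Vinogradov count) and then offer only a direction. Your identity $\int_0^1|F|^{k(k-1)}\d\beta=\sum_{h_1+\cdots+h_{s_0}=0}\prod_iK(h_i)$ constrains only the \emph{first} power sum of the $h_i$, so it does not produce a Vinogradov system, and it contains no mechanism generating the weights $\min\{N,\|km\alpha_k\|^{-1}\}$ from which $\Delta$ must be harvested. The device the paper uses in Lemma \ref{lemma5.1} is absent from your proposal: write $G_k(\boldsymbol\alpha,\beta)=N^{-1}\sum_n\sum_he(\psi(n+h;\boldsymbol\alpha)+(n+h)\beta)$, apply H\"older in the shift origin $n$, and introduce $k-2$ auxiliary variables $\beta_1,\ldots,\beta_{k-2}$ by orthogonality, so that the resulting count is a Vinogradov system in $h_1,\ldots,h_{2r}$ whose $(k-1)$-st power sum is left \emph{free}; the binomial theorem collapses the phase to $\alpha_k\sum_i(h_i^k-h_{r+i}^k)+(kn\alpha_k+\alpha_{k-1})\sum_i(h_i^{k-1}-h_{r+i}^{k-1})$, and it is the subsequent summation over $n$ that produces the factors $\min\{N,\|km\alpha_k\|^{-1}\}$ in \eqref{5.5}--\eqref{5.6}; Vinogradov's mean value theorem then bounds the counting function $\rho(m)$, and the reciprocal-sum lemma delivers \eqref{5.7}. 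Without this (or an equivalent) mechanism your exponent $\tau_p(k)$ is unproven; and since the two anchors rest on different mechanisms (Weyl differencing for $\sigma_p(k)$, the shifted Vinogradov system for $\tau_p(k)$), both proofs need to be supplied, not merely the bookkeeping that surrounds them.
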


We observe that when $k\ge 3$ and $1\le p\le 4/3$, then the conclusion of Theorem \ref{theorem1.3} matches 
Weyl's inequality \eqref{1.8} in strength. Moreover, when $k\ge 3$ and
\[
1\le p\le1+\frac{1}{k^2-k-1},
\]
this theorem matches in strength the analogue of Weyl's inequality deriving from modern versions of Vinogradov's 
mean value theorem (obtained, for example, by substituting the main conclusions of \cite{BDG2016} or 
\cite{Woo2019} into \cite[Theorem 5.2]{Vau1997}). Meanwhile, even for $k=2$, the strength of Theorem 
\ref{theorem1.3} approaches that of Weyl's inequality in the limit as $p\rightarrow 1+$.\par

The focus of Theorems \ref{theorem1.2} and \ref{theorem1.3} is the derivation of exponential sum estimates with 
variables restricted to special subsequences of the natural numbers. If, instead, one is concerned solely with 
applications of the exponential sums, then one may attempt to incorporate the restriction to special subsequences 
into the methods employed following the application of conventional estimates for exponential sums. Thus, for 
example, the reader will find alternative approaches to the one presented in this memoir that extract conclusions in 
applications, avoiding impairment, when one restricts the underlying variables to be square-free, or more generally 
$r$-free. We refer the reader to \cite[Theorem 2]{BBH1991}, for example, for a conclusion concerning small values 
of $\|\alpha n^k\|$, when $n$ is restricted to be square-free. In this latter work, one avoids direct use of the exponential 
sum
\[
\sum_{1\le n\le N}\mu_2(n)e(\alpha n^k)
\]
by applying the convolution identity
\[
\mu_2(n)=\sum_{d^2|n}\mu(d),
\]
and removing the contribution of the large square factors directly. This approach achieves a conclusion for the 
application at hand matching its unrestricted analogue at the cost of various complications in the associated 
argument. The ideas underlying the proof of Theorems \ref{theorem1.2} and \ref{theorem1.3} permit such 
conclusions to be obtained in wider generality than hitherto, and oftentimes with greater economy of effort.\par

As the next application of the ideas in this memoir, in \S6, we explore some consequences for the equidistribution 
modulo $1$ of polynomial sequences. Consider a real sequence $(s_n)_{n=1}^\infty$ and the associated fractional 
parts $\{s_n\}=s_n-\lfloor s_n\rfloor$. This sequence is said to be {\it equidistributed modulo $1$} when, for each 
pair of real numbers $a$ and $b$ with $0\le a<b\le 1$, one has
\[
\lim_{N\rightarrow \infty} N^{-1}\text{card}\{ 1\le n\le N: a\le \{s_n\}\le b\}=b-a.
\]

\begin{theorem}\label{theorem1.4}
Suppose that $\mathscr A=\{a_1,a_2,\ldots \}$, with $a_1<a_2<\ldots $, is a strongly subconvex $L^p$-set with 
$1\le p<2$. Let $k\ge 2$, suppose that $(\alpha_0,\alpha_1,\ldots ,\alpha_k)\in \mathbb R^{k+1}$, and define the 
polynomial $\psi(x;\boldsymbol \alpha)$ as in \eqref{1.6}. Then, provided that one at least of the coefficients 
$\alpha_2,\ldots ,\alpha_k$ is irrational, the sequence $\left( \psi(a_n;\boldsymbol \alpha)\right)_{n=1}^\infty$ is 
equidistributed modulo $1$.
\end{theorem}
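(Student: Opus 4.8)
The plan is to verify Weyl's criterion: the sequence $\bigl(\psi(a_n;\boldsymbol\alpha)\bigr)_{n\ge1}$ is equidistributed modulo $1$ if and only if, for every nonzero integer $h$, one has $M^{-1}\sum_{n=1}^M e(h\psi(a_n;\boldsymbol\alpha))\to0$ as $M\to\infty$. First I would reindex this sum. Writing $N=a_M$, the sum over the first $M$ elements of $\mathscr A$ is precisely $\sum_{n\in\mathscr A(N)}e(h\psi(n;\boldsymbol\alpha))$, and $M=A(N)$. Since $\mathscr A$ is strongly subconvex, the discussion following Definition \ref{definition1.1} gives $A(N)\gg N$, whence $A(N)\asymp N$. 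It therefore suffices to prove that, for each fixed nonzero integer $h$,
\[
S(N)=\sum_{n\in\mathscr A(N)}e(h\psi(n;\boldsymbol\alpha))=o(N)\qquad(N\to\infty).
\]

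One's first instinct is to invoke Theorem \ref{theorem1.3} directly, after reducing to the situation in which the leading coefficient is irrational. This succeeds when the relevant coefficient is Diophantine, but it is insufficient in general: the bound of Theorem \ref{theorem1.3} is governed by a Dirichlet denominator $q$ that, for a Liouville coefficient, can stagnate at small values throughout long ranges of $N$, rendering the estimate trivial there. This is already the classical obstruction for $\mathscr A=\mathbb N$, where equidistribution is secured not by Weyl's inequality but by van der Corput's method. I would therefore base the argument on differencing, which is insensitive to the Diophantine quality of $\boldsymbol\alpha$.

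Accordingly, I would apply Weyl (van der Corput) differencing to $S(N)=\sum_{n\le N}\mathbf 1_{\mathscr A}(n)e(h\psi(n;\boldsymbol\alpha))$ with a parameter $H$, reducing $|S(N)|^2$ to a main term $\ll NA(N)/H$ together with an average over $1\le d<H$ of the correlation sums
\[
U_d=\sum_{n:\ n,\,n+d\in\mathscr A(N)}e\bigl(h(\psi(n+d;\boldsymbol\alpha)-\psi(n;\boldsymbol\alpha))\bigr).
\]
Here $\psi(n+d;\boldsymbol\alpha)-\psi(n;\boldsymbol\alpha)$ has degree $k-1$, and because the highest irrational coefficient among $\alpha_2,\ldots,\alpha_k$ lies in degree at least $2$, the differenced polynomial still carries an irrational coefficient in degree at least $1$. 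This is exactly the role of the hypothesis that the irrationality occurs among $\alpha_2,\ldots,\alpha_k$ rather than at $\alpha_1$: were $\alpha_1$ the sole irrational coefficient, its contribution would, after a single differencing, be transferred to the constant term, destroying the oscillation. Iterating the differencing and letting $N\to\infty$ before $H\to\infty$ keeps the argument free of any Diophantine hypothesis on the coefficients.

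The main obstacle is the control of the correlation sums $U_d$, which run over the shifted intersections $\mathscr A\cap(\mathscr A-d)$. These are not themselves subconvex $L^p$-sets, and a linear exponential sum over such a set need not exhibit any cancellation; so the oscillation that the differencing has exposed must be converted into genuine savings by transferring the equidistribution available over $\mathbb Z$ to the thin set $\mathscr A$. This is where the strong subconvex property \eqref{1.3} is essential, entering through the Fourier identity $\mathbf 1_{\mathscr A}(n)=\int_0^1 T(\beta)e(-n\beta)\,\d\beta$, with $T(\beta)=\sum_{a\in\mathscr A(N)}e(a\beta)$, and the clean mean value $I_p(N;\mathscr A)\ll N^{-1}A(N)^p$ in which no factor $N^\varepsilon$ appears---this is precisely why strong, rather than merely weak, subconvexity is assumed. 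The delicate point, which I expect to be the technical heart of the proof, is that neither ingredient suffices alone: a pointwise Weyl estimate fails for Liouville coefficients, while a crude global H\"older transference is too lossy, already returning only the trivial bound $S(N)\ll N$ when $\mathscr A=\mathbb N$. The differencing and the subconvex mean value must instead be interleaved, so that the cancellation furnished by the irrational coefficient through the averaging over the difference parameters survives the passage from $\mathbb Z$ to $\mathscr A$ uniformly in $N$.
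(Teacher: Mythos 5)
Your opening reduction via Weyl's criterion is sound, and two of your structural observations match the paper exactly: a direct appeal to Theorem \ref{theorem1.3} does fail when the relevant coefficient is Liouville (no rate of decay is possible, so the proof must be rate-free), and strong rather than weak subconvexity is needed precisely so that no $N^\varepsilon$ factor swallows a rate-free saving. The gap is that your actual argument---van der Corput differencing applied directly to the restricted sum $\sum_{n\le N}\mathbf{1}_{\mathscr A}(n)e(h\psi(n;\boldsymbol\alpha))$---is abandoned at the decisive step. Differencing the indicator produces correlation sums over $\mathscr A\cap(\mathscr A-d)$, and, as you yourself concede, such sets inherit no subconvexity, and linear exponential sums over them need not be small: the Beatty-set example given in the paper after the proof of Theorem \ref{theorem1.4} exhibits a strongly subconvex $L^p$-set on which a linear sum with irrational frequency fails to be $o(N)$ (indeed equidistribution fails there). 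Your final paragraph asserts that differencing and subconvexity ``must be interleaved'' but supplies no mechanism; that mechanism is the whole content of the proof. Nor is a completion along the difference-first route evident: expanding one indicator inside $U_d$ by Fourier inversion sets up an induction on the degree of the phase whose base case---linear sums over $\mathscr A$---is exactly the false statement above.

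The missing idea is an order of operations: never difference the restricted sum. The paper first strips off the indicator by orthogonality, as in \eqref{4.4},
\[
\sum_{n\in\mathscr A(N)}e(m\psi(n;\boldsymbol\alpha))=\int_0^1G_k(m\boldsymbol\alpha,\beta)g(-\beta)\d\beta ,
\]
where $G_k(m\boldsymbol\alpha,\beta)=\sum_{1\le n\le N}e(m\psi(n;\boldsymbol\alpha)+\beta n)$ is unrestricted and $g(\beta)=\sum_{n\in\mathscr A(N)}e(n\beta)$. A three-factor H\"older inequality, together with $\int_0^1|G_k|^2\d\beta\le N$ (orthogonality) and $\int_0^1|g|^p\d\beta\ll N^{p-1}$ (strong subconvexity), gives
\[
\Bigl|\sum_{n\in\mathscr A(N)}e(m\psi(n;\boldsymbol\alpha))\Bigr|\ll N^{2-2/p}\Bigl(\sup_{\beta\in[0,1)}|G_k(m\boldsymbol\alpha,\beta)|\Bigr)^{\frac{2}{p}-1},
\]
so, since $2/p-1>0$, the theorem reduces to showing that the \emph{unrestricted} sum satisfies $\sup_\beta|G_k(m\boldsymbol\alpha,\beta)|=o(N)$. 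This is where the hypothesis on $\alpha_2,\ldots,\alpha_k$ is genuinely used: the twist $\beta$ introduced by the transference perturbs only the linear coefficient, so an irrational coefficient in some degree $h\ge 2$ survives uniformly in $\beta$. The paper then proves this uniform $o(N)$ bound with no Diophantine hypothesis by contradiction: passing to progressions modulo a suitable $r$, the relevant polynomial in the new variable has irrational leading coefficient $\beta_h=mr^h\alpha_h$, and if the sup were $\ge N/\eta$ for all large $N$, Baker's version of Weyl's inequality would produce, for every large $Q$, a denominator $q<Q$ with $\|q\beta_h\|<Q^{-3/2}$, forcing $\beta_h\in\mathbb Q$. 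This furnishes a single good block length $N_0=N_0(\eta)$ for each $\eta>1$, and splitting $[1,N]$ into progressions modulo $r$ and blocks of length $N_0$ completes the proof. (Classical Weyl differencing of the unrestricted sum $G_k$ would also serve at this last stage; the essential point is that any differencing happens after, not before, the indicator of $\mathscr A$ has been removed.)
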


In Theorem \ref{theorem6.1} below, we provide a similar conclusion applicable to weakly subconvex $L^p$-sets, 
subject to the condition that one at least of the coefficients $\alpha_2,\ldots ,\alpha_k$ is of finite Diophantine type. 
We direct the reader to the discussion following Example \ref{example2.2} for an explanation of what it means for a 
real number $\theta$ to be of finite Diophantine type. For now, it suffices to remark that the set of such numbers 
has full measure in the real numbers, and includes all irrational real algebraic numbers. We remark also that the 
conclusion of Theorem \ref{theorem1.4} does not remain valid if one insists only that one at least of the coefficients 
$\alpha_1,\ldots ,\alpha_k$ is irrational. An example of a strongly subconvex $L^p$-set is given in the discussion  
following the proof of Theorem \ref{theorem1.4} in \S6 illustrating that equidistribution modulo $1$ may fail for the 
sequence $(\psi(a_n;\boldsymbol \alpha))_{n=1}^\infty$ when $\alpha_1$ is irrational.

In \S7, we offer some remarks concerning further applications of subconvex $L^p$-sets $\mathscr A$ to problems 
involving the estimation of averages of arithmetic functions restricted to $\mathscr A$. We direct the reader to \S7 
for a more comprehensive account of such matters. Here, the reader will find a general theorem concerning such 
averages, as well as illustrative examples involving the von Mangoldt function, character sums, and averages of cusp 
form coefficients, all restricted to subconvex $L^p$-sets. For now, we present but one of many examples, involving 
the M\"obius function $\mu (n)$.

\begin{theorem}\label{theorem1.5}
Suppose that $\mathscr A$ is a strongly subconvex $L^p$-set for some real number $p$ with $1\le p<2$. Then, 
whenever $A>0$ and $N$ is sufficiently large in terms of $A$, one has
\[
\sum_{n\in \mathscr A(N)}\mu(n)\ll N(\log N)^{-A}.
\]
\end{theorem}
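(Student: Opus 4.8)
The plan is to obtain the Möbius cancellation over $\mathscr A$ by combining the classical estimate for $\sum_{n\le N}\mu(n)e(n\alpha)$ with the strongly subconvex $L^p$-hypothesis via H\"older's inequality on the circle. The starting point is the orthogonality relation, which allows us to extract the sum over $\mathscr A$ from the full Möbius sum. Writing $\chi_{\mathscr A}$ for the characteristic function of $\mathscr A(N)$ and
\[
F(\alpha)=\sum_{1\le n\le N}\mu(n)e(n\alpha),\qquad G(\alpha)=\sum_{n\in\mathscr A(N)}e(n\alpha),
\]
orthogonality gives
\[
\sum_{n\in\mathscr A(N)}\mu(n)=\int_0^1 F(\alpha)\overline{G(\alpha)}\,\d\alpha.
\]
Thus the quantity we wish to bound is expressed as an inner product of the classical Möbius exponential sum against the subconvex generating function $G$.

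The key step is to apply H\"older's inequality to this integral. With $p$ in the admissible range $1\le p<2$, I would write $1/p+1/p'=1$ and bound
\[
\Bigl|\int_0^1 F\overline{G}\,\d\alpha\Bigr|
\le\Bigl(\int_0^1|F(\alpha)|^{p'}\d\alpha\Bigr)^{1/p'}\Bigl(\int_0^1|G(\alpha)|^{p}\d\alpha\Bigr)^{1/p}.
\]
The second factor is exactly $I_p(N;\mathscr A)^{1/p}$, which the strongly subconvex hypothesis \eqref{1.3} controls by $\ll(N^{-1}A(N)^p)^{1/p}=N^{-1/p}A(N)$. For the first factor, since $p'>2$, I would use the pointwise bound $|F(\alpha)|\le N$ together with the mean-square estimate $\int_0^1|F(\alpha)|^2\d\alpha=\sum_{n\le N}\mu(n)^2\ll N$ to interpolate, giving $\int_0^1|F|^{p'}\d\alpha\le N^{p'-2}\int_0^1|F|^2\d\alpha\ll N^{p'-1}$, whence the first factor is $\ll N^{1-1/p'}=N^{1/p}$. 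Multiplying the two contributions yields $\ll N^{1/p}\cdot N^{-1/p}A(N)=A(N)\ll N$, which recovers only the trivial bound and is therefore insufficient.

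The difficulty, and hence the crux of the argument, is that the crude pointwise bound on $F$ throws away all the Möbius cancellation. To extract the logarithmic savings $(\log N)^{-A}$ promised in the statement, I would instead invoke a strong quantitative form of the classical fact that the Möbius function does not correlate with exponential phases — precisely the kind of estimate furnished by the Vinogradov–Korobov zero-free region together with the Davenport–Vinogradov bound
\[
\sup_{\alpha\in[0,1)}\Bigl|\sum_{1\le n\le N}\mu(n)e(n\alpha)\Bigr|\ll_A N(\log N)^{-A},
\]
valid for every $A>0$. The natural route is then to replace the H\"older split above by a more careful argument: on the minor arcs the uniform Davenport bound on $F$ provides the $(\log N)^{-A}$ saving directly, while on the major arcs one exploits the fact that $G$ is large only near rationals with small denominator, where the mean value $I_p(N;\mathscr A)$ localises the mass of $|G|^p$. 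I expect the main obstacle to lie in organising this major/minor arc decomposition so that the subconvexity of $\mathscr A$ is deployed on precisely the set where $F$ fails to be uniformly small, balancing the two contributions to retain the full power of $A$. A clean way to accomplish this is to feed the uniform estimate on $F$ back into the H\"older inequality: replacing $\|F\|_\infty\le N$ by $\|F\|_\infty\ll_A N(\log N)^{-A}$ in the interpolation of $\int|F|^{p'}$ improves the first factor to $\ll_A N^{1/p}(\log N)^{-A(1-2/p')}$, and since $1-2/p'=2/p-1>0$ for $p<2$, this converts the previously trivial bound into $\ll_{A}N(\log N)^{-A'}$ with $A'=A(2/p-1)$; as $A$ is arbitrary, so is $A'$, completing the proof.
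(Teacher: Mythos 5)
Your final argument is correct and is essentially the paper's own proof: the paper routes it through a general inequality (Theorem \ref{theorem7.1}) obtained by exactly your combination of orthogonality, a H\"older split of the M\"obius sum into $L^\infty$ and $L^2$ pieces against the $L^p$ norm of $G$, and Davenport's uniform bound $\sup_\alpha|F(\alpha)|\ll_A N(\log N)^{-A}$, yielding the same saving $(\log N)^{-A(2/p-1)}$ with $A$ arbitrary. Your two-factor H\"older followed by interpolation of $\|F\|_{p'}$ between $\|F\|_\infty$ and $\|F\|_2$ is algebraically identical to the paper's three-factor H\"older, so the only difference is presentational (the paper abstracts the argument to general arithmetic functions before specialising to $\mu$).
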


By applying the results of \S3, for example, one finds that the hypotheses of this theorem are applicable when
\[
\mathscr A=\{ m+1:\text{$m$ is squarefree}\},
\]
or even
\[
\mathscr A=\{ m\in \mathbb N:\text{$m+1$ is not $4$-free and $m+2$ is $4$-free}\}.
\]

\par Finally, in \S8, we record some connections between subconvex $L^p$-sets and mean values associated with 
congruences modulo $q$. Our principal result in this setting is an analogue of the mean value \eqref{1.3} for 
strongly subconvex $L^p$-sets $\mathscr A$.

\begin{theorem}\label{theorem1.6}
Suppose that $\mathscr A$ is a strongly subconvex $L^p$-set for some real number $p$ with $1<p<2$. Then, 
whenever $q$ is a natural number, one has
\[
\frac{1}{q}\sum_{a=1}^q\biggl| \sum_{n\in \mathscr A(N)}e(na/q)\biggr|^p\ll N^{p-1}+N^p/q.
\]
\end{theorem}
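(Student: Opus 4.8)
The plan is to deduce the discrete mean value from the continuous bound \eqref{1.3} by comparing the sampled values of the exponential sum
\[
f(\alpha)=\sum_{n\in\mathscr A(N)}e(n\alpha)
\]
at the points $a/q$ with local $L^p$-averages of $f$, exploiting the fact that $f$ is a trigonometric polynomial of degree at most $N$. First I would record the only input needed from the hypothesis: since $A(N)\le N$, the strong subconvexity estimate \eqref{1.3} gives $I_p(N;\mathscr A)\ll N^{-1}A(N)^p\le N^{p-1}$. The target $N^{p-1}+N^p/q$ should then emerge as a constant multiple of $(1+N/q)I_p(N;\mathscr A)$, so the whole game is to bound the normalised discrete sum $\tfrac1q\sum_{a=1}^q|f(a/q)|^p$ by $O\bigl((1+N/q)I_p(N;\mathscr A)\bigr)$.

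The key step is a discretisation estimate at a carefully chosen scale. Set $\delta=1/\max\{q,N\}$, so that $\delta\le 1/q$, and let $K_a$ denote the interval of length $\delta$ centred at $a/q$. Because the points $a/q$ $(1\le a\le q)$ are spaced $1/q\ge\delta$ apart, the intervals $K_a$ are pairwise disjoint in $\mathbb R/\mathbb Z$. For $\alpha\in K_a$ the triangle inequality together with the elementary bound $(u+v)^p\le 2^{p-1}(u^p+v^p)$, valid since $p>1$, gives $|f(a/q)|^p\le 2^{p-1}|f(\alpha)|^p+2^{p-1}|f(a/q)-f(\alpha)|^p$, while the fundamental theorem of calculus and Hölder's inequality yield
\[
|f(a/q)-f(\alpha)|^p\le\Bigl(\int_{K_a}|f'(t)|\d t\Bigr)^p\le \delta^{p-1}\int_{K_a}|f'(t)|^p\d t .
\]
Averaging the first inequality over $\alpha\in K_a$, on which $|f(a/q)|^p$ is constant, and summing over $a$, the disjointness of the $K_a$ gives
\[
\sum_{a=1}^q|f(a/q)|^p\le \frac{2^{p-1}}{\delta}\int_0^1|f(\alpha)|^p\d\alpha+2^{p-1}\delta^{p-1}\int_0^1|f'(t)|^p\d t .
\]
I would then invoke Bernstein's inequality for trigonometric polynomials, in the form $\int_0^1|f'(t)|^p\d t\le (2\pi N)^p\int_0^1|f(t)|^p\d t=(2\pi N)^pI_p(N;\mathscr A)$, to dispose of the second integral.

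To finish, substituting $\delta=1/\max\{q,N\}$ and using $\max\{q,N\}/q\le 1+N/q$ together with $\delta^{p-1}\le N^{1-p}$, both terms on the right collapse into a multiple of $(1+N/q)I_p(N;\mathscr A)$; dividing by $q$ yields
\[
\frac1q\sum_{a=1}^q|f(a/q)|^p\ll (1+N/q)I_p(N;\mathscr A)\ll (1+N/q)N^{p-1}=N^{p-1}+N^p/q ,
\]
as required. The point demanding care—indeed the crux of the argument—is the range $q\le N$: comparing $f(a/q)$ with its average over the full interval of length $1/q$ is too lossy there, since the resulting Bernstein error carries a factor $(N/q)^p$ rather than $N/q$, which would overshoot the target. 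Sampling instead on the shorter intervals of length $\delta=1/N$, which stay disjoint precisely because $q\le N$, is what keeps the discretisation error proportional to $N/q$ and produces the correct secondary term $N^p/q$. For $q>N$ the identical computation with $\delta=1/q$ simply reproduces the continuous bound $N^{p-1}$, so once $\delta$ is taken to be $1/\max\{q,N\}$ no separate treatment of the two ranges is needed.
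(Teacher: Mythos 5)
Your proof is correct, and it takes a genuinely different --- and lighter --- route than the paper. The paper also converts the discrete mean into a continuous one, but does so via the Sobolev-Gallagher inequality at scale $1/q$, producing the mixed integral $\frac{p}{2q}\int_0^1|f_N(\alpha)|^{p-1}|f_N'(\alpha)|\d\alpha$, which is then handled by H\"older's inequality; everything thus reduces to bounding $\int_0^1|f_N'(\alpha)|^p\d\alpha$. It is here that the two arguments diverge sharply: the paper writes $f_N'$ via partial summation as a telescoping combination of the partial sums $f_n$ $(1\le n\le N)$, bounds it pointwise by $2N\max_{1\le n\le N}|f_n(\alpha)|$, and then invokes the Carleson-Hunt theorem (in Bombieri's formulation) to control the $L^p$-norm of this maximal function by that of $f_N$ itself, arriving at $\int_0^1|f_N'|^p\d\alpha \ll N^p\int_0^1|f_N|^p\d\alpha \ll N^{2p-1}$. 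You obtain exactly the same derivative bound in one stroke from Bernstein's $L^p$ inequality for trigonometric polynomials of degree at most $N$ --- a classical and far more elementary tool (valid for all $p\ge 1$, e.g.\ via the M.~Riesz interpolation formula), which applies since $f_N$ has frequencies in $\{1,\ldots,N\}$. Your discretisation step is also different in detail: in place of Sobolev-Gallagher you use the convexity bound $(u+v)^p\le 2^{p-1}(u^p+v^p)$ together with the fundamental theorem of calculus and H\"older on disjoint intervals of length $\delta=1/\max\{q,N\}$, and, as you correctly emphasise, the choice of the finer scale $1/N$ when $q\le N$ is precisely what keeps the discretisation error at $N/q$ rather than $(N/q)^p$. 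The net effect is a proof of the theorem that dispenses with the Carleson-Hunt theorem entirely; what the paper's partial-summation-plus-maximal-function device buys in exchange is robustness, since it would continue to function for weighted sums $\sum_{n\in\mathscr A(N)}c_ne(n\alpha)$ with monotone coefficients $c_n$ that are not realisable as the derivative weights $2\pi{\rm i}n$ of a single trigonometric polynomial, a situation where Bernstein's inequality has no direct purchase.
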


When $q$ is smaller than $N$, the estimate provided by this theorem saves roughly a factor $q$ over the trivial 
estimate $N^p$, which is consistent with heuristics associated with associated congruences modulo $q$. Such 
heuristics would be easy to justify were we to be considering $p$-th moments with $p\ge 2$, since then 
orthogonality may be brought into play. The situation in Theorem \ref{theorem1.6} with $1<p<2$ is, unfortunately, 
rather more challenging. Thus, in order to establish this theorem, we are forced to bring into play purely analytic 
tools based on the use of the Sobolev-Gallagher inequality and the Carleson-Hunt theorem. We offer an application 
of Theorem \ref{theorem1.6} to character sum estimates restricted to subconvex $L^p$-sets in \S8.\par

We defer a detailed account of our use of subconvex $L^p$-sets in our methods deriving analogues of Weyl's 
inequality to \S4 below. For now, it suffices to make some abstract remarks that may offer some guidance to 
the reader, motivating our definition of subconvex $L^p$-sets. Suppose then that $\mathscr A$ is a strongly 
subconvex $L^p$-set for some real number $p$ with $1\le p\le 4/3$, and write
\[
g(\alpha)=\sum_{n\in \mathscr A(N)}e(n\alpha).
\]
Consider a unimodular arithmetic function $c:\mathbb N\rightarrow \mathbb C$, and write
\[
h(\alpha)=\sum_{1\le n\le N}c(n)e(n\alpha).
\]
Then it follows by orthogonality that
\[
\sum_{n\in \mathcal A(N)}c(n)=\int_0^1g(\alpha)h(-\alpha)\d\alpha .
\]
Thus, by applying H\"older's inequality and the Definition \ref{definition1.1}(b) of a strongly subconvex $L^p$-set, 
we find that
\begin{align*}
\Bigl| \sum_{n\in \mathscr A(N)}c(n)\Bigr|&\le \biggl( \int_0^1 |g(\alpha)|^{4/3}\d\alpha \biggr)^{3/4}
\biggl( \int_0^1 |h(\alpha)|^4\d\alpha \biggr)^{1/4}\\
&\ll (N^{1/3})^{3/4}\biggl( \sum_{\substack{1\le n_1,\ldots ,n_4\le N\\ n_1+n_2=n_3+n_4}}c(n_1)c(n_2)
{\overline{c(n_3)}}{\overline{c(n_4)}}\biggr)^{1/4}.
\end{align*}
The $4$-fold sum in the final parenthetic term may often be interpreted via Weyl differencing, or perhaps by other 
means. Thus, it may frequently be shown that this sum is of smaller order than the trivial estimate $N^3$. In such 
circumstances, we conclude that one has the non-trivial estimate
\[
\biggl| \sum_{n\in \mathscr A(N)}c(n)\biggr| =o(N).
\]
We emphasise here that the arithmetic structure of the set $\mathscr A$ makes no appearance in this estimate 
beyond the estimate \eqref{1.3} that is the fundamental property of a strongly subconvex $L^p$-set.\par

We finish by highlighting some connections between the subconvex $L^p$-sets of this memoir, and the almost 
periodic sets and sets defined by convergent sieves satisfying certain arithmetic equidistribution properties 
investigated by Br\"udern \cite{Bru2009} and subsequent authors\footnote{We note that, as is made clear in the 
cited papers here, the work of Br\"udern preceded and inspired the work of Schlage-Puchta and Keil, despite the 
apparent publication chronology.} (see \cite{Kei2011, Puc2002}). Given a set of natural numbers $\mathscr A$ 
having positive density and satisfying appropriate distribution properties in arithmetic progressions, Br\"udern 
makes use of the orthogonality relation
\[
\int_0^1\biggl| \sum_{n\in \mathscr A(N)}e(n\alpha)\biggr|^2\d\alpha =A(N),
\]
together with a major arc estimation
\[
\int_{\mathfrak M}\biggl| \sum_{n\in \mathscr A(N)}e(n\alpha)\biggr|^2\d\alpha \sim A(N),
\]
to deliver a minor arc estimate
\[
\int_\grm \biggl| \sum_{n\in \mathscr A(N)}e(n\alpha)\biggr|^2\d\alpha =o(N).
\]
Such a bound would follow from the estimate \eqref{1.3} for strongly subconvex $L^p$-sets $\mathscr A$ 
whenever one has in addition the non-trivial Weyl-type bound
\begin{equation}\label{1.10}
\sup_{\alpha\in \grm}\biggl| \sum_{n\in \mathscr A(N)}e(n\alpha)\biggr|=o(A(N)).
\end{equation}
The bound \eqref{1.3} is not accessible to Br\"udern's almost-periodic theory without highly stringent additional 
conditions on the set $\mathscr A$. Likewise, the estimate \eqref{1.10} does not seem to be easily accessible from 
the upper bound \eqref{1.3} without additional hypotheses. Thus, while there are certainly connections between the 
concepts of subconvex $L^p$-sets and Br\"udern's almost periodic sets, the associated theories seem for now to be 
largely disjoint. It would be very interesting to determine if strongly subconvex $L^p$-sets are characterised in 
terms of limit-periodic structures.\par 

Throughout, the letter $\varepsilon$ will denote a positive number. We adopt the convention that whenever 
$\varepsilon$ appears in a statement, either implicitly or explicitly, we assert that the statement holds for each 
$\varepsilon>0$. Our basic parameter will be $N$, a sufficiently large positive number. In addition, we use $\ll $ and 
$\gg$ to denote Vinogradov's well-known notation, implicit constants depending at most on $\varepsilon$, as well 
as other ambient parameters apparent from the context. We write $f\asymp g$ when $f\ll g$ and $g\ll f$. Also, we 
define $\|\theta\|$ and $\{ \theta\}$ for $\theta\in \mathbb R$ by putting 
$\|\theta\|=\min \{ |\theta -t|:t\in \mathbb Z\}$  and $\{ \theta\}=\theta -\lfloor \theta\rfloor$.

\section{Examples of subconvex $L^p$-sets}
We present a number of naturally occurring examples of subconvex $L^p$-sets in this section in order to motivate 
our subsequent discussions. Our first result concerns the classical and well-known example of the natural numbers 
$\mathbb N$. Here, we justify some of the remarks made in the introduction following Definition \ref{definition1.1}. 
The proof of this theorem follows as a familiar exercise. We provide details for the sake of completeness.

\begin{theorem}\label{theorem2.1}
One has
\[
\int_0^1 \biggl| \sum_{1\le n\le N}e(n\alpha)\biggr|\d\alpha \asymp \log (2N),
\]
and when $p>1$, one has
\[
\int_0^1\biggl| \sum_{1\le n\le N}e(n\alpha)\biggr|^p\d\alpha \ll_p N^{p-1}, 
\]
Thus, the set $\mathbb N$ is a weakly subconvex $L^1$-set, and when $p>1$ it is a strongly subconvex $L^p$-set.
\end{theorem}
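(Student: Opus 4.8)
The plan is to reduce the whole statement to the explicit evaluation of the geometric series together with the standard two-sided comparison between $|\sin(\pi\alpha)|$ and the distance $\|\alpha\|$ to the nearest integer. Writing $M=\lfloor N\rfloor$, summation of the geometric progression gives
\[
\biggl|\sum_{1\le n\le N}e(n\alpha)\biggr| = \biggl|\frac{\sin(\pi M\alpha)}{\sin(\pi\alpha)}\biggr|.
\]
Since $\sin(\pi\alpha)$ lies above the chord joining $(0,0)$ to $(\tfrac12,1)$ on $[0,\tfrac12]$, one has $|\sin(\pi\alpha)|\ge 2\|\alpha\|$ throughout $[0,1]$, and combining this with the trivial bound by the number of terms yields the pointwise estimate
\[
\biggl|\sum_{1\le n\le N}e(n\alpha)\biggr|\le \min\bigl(N,(2\|\alpha\|)^{-1}\bigr).
\]
This single inequality drives both upper bounds.

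For the upper bounds I would exploit the symmetry $\alpha\mapsto 1-\alpha$ to restrict to $\alpha\in[0,\tfrac12]$, where $\|\alpha\|=\alpha$, and split the integral at the threshold $\alpha=1/N$. On $[0,1/N]$ I bound the sum by $N$, which contributes $O(N^{p-1})$ to the $p$-th moment (and $O(1)$ when $p=1$); on $[1/N,\tfrac12]$ I use $(2\alpha)^{-1}$, so the contribution is $\ll_p\int_{1/N}^{1/2}\alpha^{-p}\,\d\alpha$, which evaluates to $O_p(N^{p-1})$ for $p>1$ and to $O(\log(2N))$ for $p=1$. This delivers at once the claimed $O_p(N^{p-1})$ for $p>1$ together with the upper half of the $L^1$ relation.

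The one step with genuine content is the \emph{lower} bound $\int_0^1|\sum_{1\le n\le N}e(n\alpha)|\,\d\alpha\gg\log(2N)$, since here the pointwise estimate points the wrong way and the oscillation of $\sin(\pi M\alpha)$ must be retained. My plan is to discard all but the half-period $[0,\tfrac12]$ and use $|\sin(\pi\alpha)|\le\pi\alpha$ to write, after the substitution $\beta=M\alpha$,
\[
\int_0^{1/2}\frac{|\sin(\pi M\alpha)|}{|\sin(\pi\alpha)|}\,\d\alpha \ge \frac{1}{\pi}\int_0^{1/2}\frac{|\sin(\pi M\alpha)|}{\alpha}\,\d\alpha = \frac{1}{\pi}\int_0^{M/2}\frac{|\sin(\pi\beta)|}{\beta}\,\d\beta.
\]
Partitioning $[0,M/2]$ into unit intervals $[j,j+1]$, on each of which $\int_j^{j+1}|\sin(\pi\beta)|\,\d\beta=2/\pi$ while $\beta\le j+1$, bounds the $j$-th contribution below by a constant multiple of $(j+1)^{-1}$; summing the resulting harmonic series over $0\le j\le M/2$ produces the desired $\gg\log M\asymp\log(2N)$. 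The main obstacle is precisely this harmonic-sum argument: one must keep the oscillatory numerator rather than replacing it by its pointwise (and here useless) envelope.

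Finally, the two set-theoretic conclusions are immediate bookkeeping once one records $A(N)=\lfloor N\rfloor\asymp N$. For $p=1$ the relation $I_1(N;\mathbb N)\asymp\log(2N)\ll N^\varepsilon\asymp N^{\varepsilon-1}A(N)$ verifies Definition~\ref{definition1.1}(a), so $\mathbb N$ is a weakly subconvex $L^1$-set; and for $1<p<2$ the bound $I_p(N;\mathbb N)\ll_p N^{p-1}\asymp N^{-1}A(N)^p$ verifies Definition~\ref{definition1.1}(b), so $\mathbb N$ is a strongly subconvex $L^p$-set.
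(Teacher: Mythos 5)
Your proposal is correct, and for the upper bounds it is exactly the paper's argument: sum the geometric progression, bound the summed exponential pointwise via $|\sin(\pi\alpha)|\ge 2\|\alpha\|$ together with the trivial bound (the paper phrases this as $\ll\|\alpha\|^{-1}$), and split the integral at $\alpha=1/N$. Where you go beyond the paper is the lower bound $\int_0^1\bigl|\sum_{1\le n\le N}e(n\alpha)\bigr|\d\alpha\gg\log(2N)$, which the paper dismisses as ``a straightforward exercise using the formula \eqref{2.1}'': your execution of that exercise --- the substitution $\beta=M\alpha$ after bounding $|\sin(\pi\alpha)|\le\pi\alpha$, then the harmonic-sum estimate over unit intervals $[j,j+1]$ using $\int_j^{j+1}|\sin(\pi\beta)|\d\beta=2/\pi$ --- is the standard argument and is carried out correctly (the interval $j=0$ causes no trouble since $|\sin(\pi\beta)|/\beta$ is bounded there). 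The concluding bookkeeping verifying Definition \ref{definition1.1}(a) and (b) with $A(N)=\lfloor N\rfloor\asymp N$ is also fine, so the proposal is complete and in fact slightly more self-contained than the paper's own proof.
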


\begin{proof} Summing over the implicit 
geometric progression, we find that when $\alpha\notin \mathbb Z$, then  for integral values of $N$ one has
\begin{equation}\label{2.1}
\biggl| \sum_{1\le n\le N}e(n\alpha)\biggr| =\Bigl| \frac{\sin(N\pi\alpha)}{\sin(\pi \alpha)}\Bigr| \ll \|\alpha\|^{-1}.
\end{equation}
Thus
\[
\int_0^1\biggl| \sum_{1\le n\le N}e(n\alpha)\biggr|\d\alpha \ll 1+\int_{1/N}^{1/2}\alpha^{-1}\d\alpha \ll \log (2N),
\]
and when $p>1$, meanwhile,
\[
\int_0^1\biggl| \sum_{1\le n\le N}e(n\alpha)\biggr|^p\d\alpha \ll N^{p-1}+\int_{1/N}^{1/2}\alpha^{-p}\d\alpha \ll 
N^{p-1}.
\]
The lower bound implicit in the first estimate of the theorem follows via a straightforward exercise using the 
formula \eqref{2.1}, and so the conclusion of the theorem is complete.
\end{proof}

Our second example is based on the relatively recent result of Keil \cite[Theorem 1.2]{Kei2013} mentioned in 
connection with the estimates \eqref{1.4} and \eqref{1.5}.

\begin{example}\label{example2.2} The set $\mathscr N_r$ of $r$-free numbers is a weakly subconvex 
$L^{1+1/r}$-set, and when $p>1+1/r$ it is a strongly subconvex $L^p$-set.
\end{example}

The final entry on our list of basic examples of subconvex $L^p$-sets makes use of certain Beatty sequences. Given 
real numbers $\alpha$ and $\beta$, we define the {\it Beatty set}
\[
\mathscr B(\alpha,\beta)=\{ n\in \mathbb N:\text{$n=\lfloor \alpha m+\beta\rfloor$ for some $m\in \mathbb N$}\}.
\]
Writing $\mathscr B(\alpha ,\beta)=\{n_1,n_2,\ldots \}$ with $1\le n_1<n_2<\ldots $, this set may be interpreted as a 
sequence $(n_k)_{k=1}^\infty$. In order to introduce the Beatty sequences of interest to us, we recall that a real 
number $\theta$ is said to be of finite Diophantine type if there is a natural number $k$ for which
\[
\liminf_{q\rightarrow \infty}q^k\|q\theta\|>0.
\]
Thus, the set of real numbers not of finite Diophantine type has measure $0$. Moreover, it follows from Liouville's 
theorem (see \cite[Theorem 1.1]{Bak2022}) that every real irrational algebraic number is of finite Diophantine type.

\par Our estimates for moments of exponential sums over Beatty sequences are essentially as strong as those given 
in Theorem \ref{theorem2.1} for the corresponding exponential sums over all natural numbers in an interval. 

\begin{theorem}\label{theorem2.3}
Suppose that $\alpha$ and $\beta$ are real numbers with $\alpha>0$ having the property that $1/\alpha$ is of 
finite Diophantine type. Then one has
\[
\int_0^1 \biggl| \sum_{\substack{1\le n\le N\\ n\in \mathscr B(\alpha,\beta)}}e(n\theta)\biggr|\d\theta \ll 
\log^2(2N),
\]
and when $p>1$, one has
\[
\int_0^1 \biggl| \sum_{\substack{1\le n\le N\\ n\in \mathscr B(\alpha,\beta)}}e(n\theta)\biggr|^p\d\theta \ll N^{p-1}.
\]
Thus, the set $\mathscr B(\alpha,\beta)$ is a weakly subconvex $L^1$-set, and when $p>1$ it is a strongly 
subconvex $L^p$-set.
\end{theorem}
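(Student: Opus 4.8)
The plan is to reduce to the case $\alpha>1$ and then express the indicator function of $\mathscr B(\alpha,\beta)$ through the sawtooth function, thereby writing the Beatty exponential sum as a principal term of shape $\gamma V(\theta)$, where $\gamma=1/\alpha$ and $V(\theta)=\sum_{1\le n\le N}e(n\theta)$, plus a dispersed sum of translates of $V$. First I would dispose of the ranges $0<\alpha\le 1$: here consecutive values $\lfloor\alpha m+\beta\rfloor$ differ by $0$ or $1$, so $\mathscr B(\alpha,\beta)$ is cofinite and the desired bounds reduce at once to Theorem \ref{theorem2.1}. Thus I may assume $\alpha>1$, so that $\gamma=1/\alpha\in(0,1)$ is irrational of finite Diophantine type, say with $\|h\gamma\|\gg|h|^{-\kappa}$ for all $h\ne 0$. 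For such $\gamma$ the interval $[\gamma(n-\beta),\gamma(n+1-\beta))$ has length $\gamma<1$ and so contains at most one integer; writing $\varrho(x)=\{x\}-\tfrac12$ for the sawtooth, a routine computation gives the exact identity
\[
\mathbf 1[n\in\mathscr B(\alpha,\beta)]=\gamma+\varrho(-\gamma(n+1-\beta))-\varrho(-\gamma(n-\beta)).
\]
Substituting the Fourier expansion of $\varrho$ then yields
\[
\sum_{\substack{1\le n\le N\\ n\in\mathscr B(\alpha,\beta)}}e(n\theta)=\gamma V(\theta)+\sum_{h\ne 0}b_h\,V(\theta-h\gamma),\qquad |b_h|\ll\min\Bigl(\tfrac1{|h|},\tfrac{\|h\gamma\|}{|h|}\Bigr).
\]

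The principal term $\gamma V(\theta)$ contributes exactly the bounds claimed in the theorem, by Theorem \ref{theorem2.1}, and also accounts for the spike near $\theta=0$ that makes these bounds essentially best possible. The work therefore lies in the moments of the error term $D(\theta)=\sum_{h\ne0}b_hV(\theta-h\gamma)$. I would truncate this Fourier series at a height $H$ equal to a fixed power of $N$. For the resulting trigonometric polynomial $\sum_{1\le|h|\le H}b_hV(\theta-h\gamma)$, Minkowski's inequality bounds its first moment by $\bigl(\sum_{1\le|h|\le H}|h|^{-1}\bigr)\int_0^1|V(\theta)|\d\theta\ll(\log H)\log(2N)\ll\log^2(2N)$, which already yields the asserted $L^1$-bound. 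For $p>1$ the same crude estimate loses a spurious factor $\log N$ coming from $\sum_{|h|\le H}|h|^{-1}$, and removing it is where the finite-type hypothesis is essential: since $\|(h-h')\gamma\|\gg|h-h'|^{-\kappa}$, the centres $\{h\gamma\}$ of the spikes of the translates $V(\cdot-h\gamma)$ are well separated, so these spikes become essentially disjoint once $H$ is a small enough power of $N$. Exploiting this separation decouples the translates, and the $p$-th moment becomes comparable to $\bigl(\sum_{1\le|h|\le H}|h|^{-p}\bigr)N^{p-1}$; the convergence of $\sum_h|h|^{-p}$ for $p>1$ then absorbs the offending logarithm and delivers the clean bound $\ll N^{p-1}$.

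The main obstacle is the high-frequency tail $\sum_{|h|>H}b_hV(\theta-h\gamma)$, equivalently the error in truncating the conditionally convergent Fourier series of $\varrho$. This cannot be treated by Minkowski's inequality, since $\sum_{|h|>H}|h|^{-1}$ diverges, nor by the pointwise Vaaler bound, whose size is a positive power of $N$ and hence far too large for the first moment. To control it I would instead invoke the Erd\H{o}s--Tur\'an--Vaaler approximation, replacing $\varrho$ by a trigonometric polynomial of degree $H$ whose approximation error $R_H$ admits a nonnegative Fej\'er-kernel majorant $K_H$. The moments of the remainder can then be estimated by combining the orthogonality (diagonal) identity
\[
\int_0^1\Bigl|\sum_{1\le n\le N}R_H(-\gamma(n+c))e(n\theta)\Bigr|^2\d\theta=\sum_{1\le n\le N}|R_H(-\gamma(n+c))|^2\ll \frac{N}{H},
\]
in which the evaluation $\sum_{n\le N}K_H(-\gamma(n+c))\ll N+H^{\kappa+1}$ again rests on the finite-type spacing, with the concentration of $K_H$ near the integers, and then balancing $H$ against $N$. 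Arranging this balance so as to recover the entire range $1\le p<2$ is the delicate point; the assumption that $1/\alpha$ is of finite Diophantine type is precisely what keeps the distribution of the frequencies $h\gamma$, and hence this remainder, under control.
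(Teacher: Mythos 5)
Your reduction to $\alpha>1$, the sawtooth identity for the indicator of $\mathscr B(\alpha,\beta)$, the resulting decomposition into a main term $\gamma V(\theta)$ plus translates $\sum_h b_hV(\theta-h\gamma)$ with $|b_h|\ll\min(|h|^{-1},\|h\gamma\|/|h|)$, and the triangle-inequality treatment of the truncated part in $L^1$ all coincide with the opening of the paper's proof, which implements the same expansion via Montgomery--Vaughan \cite[Lemma D.1]{MV2007} to reach \eqref{2.5} and \eqref{2.6}. The difficulty lies in the two steps you leave as sketches, and the first of these is not merely ``delicate'': as you have set it up, it cannot be completed.

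The fatal issue is a clash between the two constraints you impose on the truncation height $H$. Your decoupling heuristic and your majorant estimate $\sum_{n\le N}K_H(-\gamma(n+c))\ll N+H^{\kappa+1}$ both require $H$ to be a small power of $N$, at most $N^{1/(\kappa+1)}$; but then your $L^2$ bound for the tail is $(N/H)^{1/2}\ge N^{\kappa/(2\kappa+2)}\ge N^{1/4}$, a positive power of $N$. This can never be $\ll\log^2(2N)$, so the first-moment bound is already out of reach; and for $1<p<2$ the requirement $(N/H)^{p/2}\ll N^{p-1}$ forces $H\ge N^{2/p-1}$, which is incompatible with $H\le N^{1/(\kappa+1)}$ whenever $p<2(\kappa+1)/(\kappa+2)$. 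Since the type exponent always satisfies $\kappa\ge 1$ (Dirichlet), your window is empty for every $p<4/3$, and for larger $\kappa$ it is worse. The paper's resolution is the opposite of yours: it truncates \emph{high}, at $H=N^k$ with $k$ the type exponent. At that scale, Dirichlet's theorem combined with \eqref{2.2} forces any good denominator $q$ of $1/\alpha$ to satisfy $q\gg N$, whence Baker's lemma \cite[Lemma 3.2]{Bak1986} bounds the truncation error \emph{pointwise} by $O(\log 2N)$ --- there is then no power-of-$N$ tail to fight at all; your dismissal of the pointwise Vaaler-type bound as ``a positive power of $N$'' is an artefact of insisting that $H$ be small. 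Of course, once $H$ is large the spikes are no longer separated, so your decoupling step must be abandoned; and in any case that step was asserted rather than proved --- the claim that the $p$-th moment is comparable to $(\sum_{|h|\le H}|h|^{-p})N^{p-1}$ requires summing the overlapping tails $\|\theta-h\gamma\|^{-1}$ of all translates, not just their spikes, and the worst-case spacing $\|m\gamma\|\gg|m|^{-\kappa}$ inserted crudely makes those cross terms blow up. The paper needs no decoupling: starting from the pointwise bound \eqref{2.6}, it splits the $h$-sum dyadically, bounds each dyadic block pointwise by $U(H)\ll NH^{1-1/(2k)}$ as in \eqref{2.9} (again via Baker's lemma and finite type), and then interpolates with two applications of H\"older's inequality, via \eqref{2.10}, to obtain $\int_0^1|S_2(\theta)|^p\,{\rm d}\theta\ll N^{p-1}$. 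To repair your argument you would need to adopt this high truncation and the dyadic/H\"older scheme, or else supply a genuine proof of a decoupling statement valid for large $H$.
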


\begin{proof} There is no loss of generality in restricting attention to the scenario in which $\alpha>1$. For if 
$0<\alpha \le 1$, then the set $\mathscr B(\alpha,\beta)$ contains all large natural numbers, and the desired 
conclusion is essentially immediate from Theorem \ref{theorem2.1}. Next, with our subsequent deliberations in 
mind, we equip ourselves with a conclusion concerning Diophantine approximations to the real number $1/\alpha$. 
Since $1/\alpha$ has finite Diophantine type, we may suppose that there is a natural number $k$ with the property 
that
\[
\liminf_{q\rightarrow \infty}q^k\| q/\alpha \|>0.
\]
In particular, there exists a positive number $c=c(\alpha)$ having the property that, for all large enough natural 
numbers $q$, and for all integers $a$, one has
\begin{equation}\label{2.2}
\Bigl| \frac{1}{\alpha}-\frac{a}{q}\Bigr|\ge \frac{c}{q^{k+1}}.
\end{equation}

\par Our plan is to adopt an argument relating exponential sums over Beatty sequences to corresponding 
unrestricted exponential sums, following a path close to that pursued in the proof of \cite[Lemma 4.3]{BGV2014}. 
Along the way, we incorporate adjustments and minor corrections relative to the latter source. We begin by 
observing that $n=\lfloor \alpha m+\beta\rfloor$ for some $m\in \mathbb Z$ if and only if
\begin{equation}\label{2.3}
1-\alpha^{-1}<\{ \alpha^{-1}(n-\beta)\}<1.
\end{equation}
For if $\alpha^{-1}(n-\beta)=m-1+\delta$, for some $\delta\in [0,1]$ and $m\in \mathbb Z$, then
\[
n=\alpha m+\beta -\alpha (1-\delta),
\]
whence
\[
\lfloor \alpha m+\beta\rfloor =\lfloor n+\alpha (1-\delta)\rfloor .
\]
This last integer is equal to $n$ if and only if $0\le \alpha (1-\delta)<1$. Thus, we have $\lfloor \alpha m+\beta
\rfloor =n$ if and only if $1-1/\alpha <\delta \le 1$, or equivalently, one has
\[
1-1/\alpha <\{ m-1+\delta \}\le 1.
\]
The situation with $\delta=1-1/\alpha$ corresponds to $n=\alpha m+\beta-1$, whilst that with $\delta=1$ 
corresponds to $n=\alpha m+\beta$. Since $\alpha $ is irrational, these situations can happen in their respective 
situations at most once.\par

Next, define the function $\psi:\mathbb R\rightarrow [-1/2,1/2)$ by putting
\[
\psi(x)=x-\lfloor x\rfloor -1/2,
\]
and observe that $\psi(x)$ is periodic with period $1$. When $x\in [0,1)$, moreover, one has
\[
\alpha^{-1}+\psi(x)-\psi(x+1/\alpha)=\begin{cases}1,&\text{when $1-1/\alpha\le x<1$},\\
0,&\text{when $0\le x<1-1/\alpha$}.\end{cases}
\]
Here, we have corrected the corresponding statement in the proof of \cite[Lemma 4.3]{BGV2014}. It therefore 
follows from the criterion \eqref{2.3} that
\begin{equation}\label{2.4}
\sum_{\substack{1\le n\le N\\ n\in \mathscr B(\alpha, \beta)}}e(n\theta)=S_1(\theta)+S_2(\theta),
\end{equation}
where
\[
S_1(\theta)=\frac{1}{\alpha}\sum_{1\le n\le N}e(n\theta)
\]
and
\[
S_2(\theta)=\sum_{1\le n\le N}\Bigl( \psi \Bigl( \frac{n-\beta}{\alpha}\Bigr) -\psi\Bigl( \frac{n+1-\beta}{\alpha}\Bigr) 
\Bigr) e(\theta n)+O(1).
\]

\par The most difficult part of the decomposition \eqref{2.4} concerns the expression $S_2(\theta)$. Again following 
\cite{BGV2014}, we write
\[
W(\varphi)=\sum_{1\le n\le N}\min\{ 1,N^{-k}\|n/\alpha -\varphi\|^{-1}\}.
\]
Then, by applying Montgomery and Vaughan \cite[Lemma D.1]{MV2007}, we see that
\begin{align}
S_2(\theta)=\,\sum_{0<|h|\le N^k}&\frac{e(-\beta h/\alpha)-e((1-\beta )h/\alpha)}{2\pi{\rm i}h}\sum_{1\le n\le N}
e(n(\theta +h/\alpha))\notag \\
&\, +O(1+W((\beta-1)/\alpha)+W(\beta/\alpha)).\label{2.5}
\end{align}
By Dirichlet's approximation theorem, there exist $a\in \mathbb Z$ and $q\in \mathbb N$ with $(a,q)=1$ and 
$1\le q\le N^k$ satisfying the bound $|q/\alpha -a|\le N^{-k}$. Since $\alpha$ has the property \eqref{2.2}, we find 
that
\[
cq^{-k-1}\le \Bigl| \frac{1}{\alpha}-\frac{a}{q}\Bigr|\le q^{-1}N^{-k},
\]
whence $q^k\ge cN^k$, and in particular $q\gg N$. We therefore deduce from Baker \cite[Lemma 3.2]{Bak1986} that
\begin{align*}
W(\varphi)&=N^{-k}\sum_{1\le n\le N}\min\{ N^k,\|n/\alpha -\varphi\|^{-1}\}\\
&\ll N^{-k}(N^k+q\log q)(N/q+1)\\
&\ll N\left( q^{-1}+N^{-1}+(q\log q)N^{-k-1}+(\log q)N^{-k}\right) .
\end{align*}
Since we may suppose that $N\ll q\le N^k$, we therefore see that $W(\varphi)\ll \log (2N)$, and hence we deduce 
from \eqref{2.5} that
\begin{equation}\label{2.6}
S_2(\theta)\ll \log (2N)+\sum_{0<|h|\le N^k}\frac{1}{|h|}\min \{ N, \|\theta +h/\alpha\|^{-1}\} .
\end{equation}

\par By a change of variable, we see that
\begin{align*}
\int_0^1|S_2(\theta)|\d\theta &\ll \log (2N)+\sum_{0<|h|\le N^k}\frac{1}{|h|}\int_0^1\min\{N,\|\theta\|^{-1}\} 
\d\theta \\
&\ll \log (2N)+\sum_{0<|h|\le N^k}\frac{1}{|h|}\log (2N),
\end{align*}
whence
\[
\int_0^1|S_2(\theta)|\d\theta \ll \log^2(2N).
\]
We therefore deduce from \eqref{2.4} and the argument associated with Theorem \ref{theorem2.1} that
\begin{align*}
\int_0^1\biggl| \sum_{\substack{1\le n\le N\\ n\in \mathscr B(\alpha,\beta)}}e(n\theta)\biggr|\d\theta 
&\le \frac{1}{\alpha}\int_0^1\biggl| \sum_{1\le n\le N}e(n\theta)\biggr|\d\theta +\int_0^1|S_2(\theta)|\d\theta \\
&\ll \frac{1}{\alpha}\log (2N)+\log^2(2N).
\end{align*}
The first conclusion of the theorem is now immediate.\par

The second bound of the theorem requires additional effort. We suppose throughout that $p$ is a real number with 
$p>1$. We observe first that from \eqref{2.6}, there is a choice of $\eta\in \{ +1,-1\}$ for which
\[
S_2(\theta)\ll\log (2N)+\sum_{1\le h\le N^k}\frac{1}{h}\min\{ N,\|h/\alpha +\eta \theta\|^{-1}\}.
\]
By dividing the summation over $h$ into dyadic intervals, we find that
\begin{equation}\label{2.7}
S_2(\theta)\ll \log (2N)+\sum_{j=0}^J 2^{-j}U(2^j),
\end{equation}
where $J=\lfloor k(\log N)/(\log 2)\rfloor$, and
\begin{equation}\label{2.8}
U(H)=\sum_{H\le h\le 2H}\min\{ N,\|h/\alpha +\eta \theta\|^{-1}\} .
\end{equation}

\par Consider a typical value of $H=2^j$ with $0\le j\le J$. By Dirichlet's approximation theorem, there exist 
$a\in \mathbb Z$ and $q\in \mathbb N$ with $(a,q)=1$ and $1\le q\le H$ satisfying the bound
\[
|q/\alpha -a|\le H^{-1}.
\]
Since $\alpha $ has the property \eqref{2.2}, we find that
\[
cq^{-k-1}\le \Bigl| \frac{1}{\alpha}-\frac{a}{q}\Bigr|\le q^{-1}H^{-1},
\]
whence $q\gg H^{1/k}$. We therefore deduce from Baker \cite[Lemma 3.2]{Bak1986} that
\begin{align*}
\sum_{H\le h\le 2H}\min\{ N,\|h/\alpha +\eta \theta\|^{-1}\}&\ll (N+q\log q)(H/q+1)\\
&\ll NH\Bigl( \frac{1}{q}+\frac{1}{H}+\frac{q\log q}{NH}+\frac{\log q}{N}\Bigr) .
\end{align*}
Since we may suppose that $H^{1/k}\ll q\le H\ll N^k$, we discern that
\begin{equation}\label{2.9}
U(H)=\sum_{H\le h\le 2H}\min\{ N,\| h/\alpha +\eta \theta\|^{-1}\} \ll NH^{1-1/k}\log (2H)\ll NH^{1-1/(2k)}.
\end{equation}

\par By applying H\"older's inequality and substituting the estimate \eqref{2.9}, we find from \eqref{2.8} that
\begin{align*}
U(H)^p&\ll U(H)^{(p-1)/2}H^{(p-1)/2}\sum_{H\le h\le 2H}\left( \min\{ N,\|h/\alpha +\eta \theta\|^{-1}\}
\right)^{(p+1)/2}\\
&\ll (NH^{1-1/(2k)})^{(p-1)/2}H^{(p-1)/2}\sum_{H\le h\le 2H}\left( \min\{ N,\|h/\alpha +\eta \theta\|^{-1}\}
\right)^{(p+1)/2}.
\end{align*}
Since we are assuming throughout that $p>1$, it follows via a change of variable that
\begin{align}
\int_0^1 U(H)^p\d\theta &\ll N^{(p-1)/2}H^{p-1-(p-1)/(4k)}\sum_{H\le h\le 2H}\int_0^1 
\left( \min\{ N,\|\beta\|^{-1}\}
\right)^{(p+1)/2}\d\beta \notag \\
&\ll N^{(p-1)/2}H^{p-(p-1)/(4k)}N^{(p+1)/2-1}\notag \\
&\ll N^{p-1}H^{p-2\delta (p-1)},\label{2.10}
\end{align}
where we write $\delta=1/(8k)$.\par

We now return to \eqref{2.7}, observing that an additional application of H\"older's inequality reveals that
\[
|S_2(\theta)|^p\ll \log^p(2N)+\biggl( \sum_{j=0}^J2^{-\delta j}\biggr)^{p-1}\sum_{j=0}^J 2^{(\delta (p-1)-p)j}
U(2^j)^p.
\]
By applying \eqref{2.10}, we therefore infer that
\begin{align*}
\int_0^1 |S_2(\theta )|^p\d\theta &\ll \log^p(2N)+\sum_{j=0}^J 2^{(\delta (p-1)-p)j}\int_0^1 U(2^j)^p\d\theta \\
&\ll \log^p(2N)+\sum_{j=0}^J 2^{(\delta(p-1)-p)j}N^{p-1}(2^j)^{p-2\delta (p-1)}\\
&\ll \log^p(2N)+N^{p-1}\sum_{j=0}^J2^{-\delta (p-1)j}\ll N^{p-1}.
\end{align*}
Finally, on substituting this bound into a consequence of the decomposition \eqref{2.4}, we arrive at the upper 
bound
\begin{align*}
\int_0^1 \biggl| \sum_{\substack{1\le n\le N\\ n\in \mathscr B(\alpha,\beta)}}e(n\theta )\biggr|^p\d\theta &\ll 
\int_0^1 |S_1(\theta)|^p\d\theta +\int_0^1|S_2(\theta)|^p\d\theta \\
&\ll \int_0^1 \biggl| \sum_{1\le n\le N}e(n\theta )\biggr|^p\d\theta +N^{p-1}.
\end{align*}
The second conclusion of the theorem is now immediate from the second estimate of Theorem \ref{theorem2.1}. 
This completes the proof of the theorem.
\end{proof}

It is possible that a more sophisticated argument might yield the bound
\[
\int_0^1\biggl|\sum_{\substack{1\le n\le N\\ n\in \mathscr B(\alpha,\beta)}}e(n\theta)\biggr|\d\theta \ll \log (2N).
\]
The reader should regard this as a challenge for the enthusiast.

\section{Engineering new subconvex $L^p$-sets}
Given one or more subconvex $L^p$-sets, it is possible to modify these sets in various ways so as to create new 
examples of subconvex $L^q$-sets, for values of $q$ depending on the original parameter $p$. In this section we 
survey the methods available for engineering these new subconvex sets. This survey of methods is far from 
exhaustive, but at least serves to illustrate that there is available an extraordinary diversity of examples of subconvex 
$L^p$-sets. Throughout this section, we concentrate on conclusions for {\it strongly} subconvex $L^p$-sets. 
However, all of our results apply equally well to {\it weakly} subconvex $L^p$-sets, with the corresponding proofs 
proceeding mutatis mutandis.\par

We begin by examining complements of sets.

\begin{theorem}\label{theorem3.1}
Suppose that $\mathscr A$ and $\mathscr B$ are strongly subconvex $L^p$-sets with $p>1$ and 
$\mathscr A\subseteq \mathscr B$, and define $\mathscr C=\mathscr B\setminus \mathscr A$. Then provided that 
$\mathscr C$ has positive lower density, the set $\mathscr C$ is a strongly subconvex $L^p$-set.
\end{theorem}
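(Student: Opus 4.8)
The plan is to exploit the fact that the exponential sum depends linearly on the characteristic function of the underlying set. Since $\mathscr A\subseteq \mathscr B$, on the interval $[1,N]$ one has $\mathbf 1_{\mathscr C}=\mathbf 1_{\mathscr B}-\mathbf 1_{\mathscr A}$, and hence, writing
\[
g_{\mathscr D}(\alpha)=\sum_{n\in \mathscr D(N)}e(n\alpha)\qquad (\mathscr D\in \{\mathscr A,\mathscr B,\mathscr C\}),
\]
one has the exact pointwise identity $g_{\mathscr C}(\alpha)=g_{\mathscr B}(\alpha)-g_{\mathscr A}(\alpha)$. This reduces the problem to comparing the $L^p$-norm of $g_{\mathscr C}$ with those of $g_{\mathscr A}$ and $g_{\mathscr B}$, both of which we control by hypothesis.

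First I would invoke Minkowski's inequality in $L^p[0,1]$, which is available precisely because $p>1$, to obtain
\[
I_p(N;\mathscr C)^{1/p}\le I_p(N;\mathscr B)^{1/p}+I_p(N;\mathscr A)^{1/p}.
\]
Applying Definition \ref{definition1.1}(b) to the strongly subconvex $L^p$-sets $\mathscr A$ and $\mathscr B$ gives $I_p(N;\mathscr A)\ll N^{-1}A(N)^p$ and $I_p(N;\mathscr B)\ll N^{-1}B(N)^p$; and since $\mathscr A\subseteq \mathscr B$ we have $A(N)\le B(N)$, so both moments are $\ll N^{-1}B(N)^p$. Raising the preceding display to the $p$-th power therefore yields $I_p(N;\mathscr C)\ll N^{-1}B(N)^p$.

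The remaining step — and the only place where the positive lower density of $\mathscr C$ enters — is to replace $B(N)^p$ by $C(N)^p$. Here I would note the trivial bound $B(N)\le N$, while the hypothesised positive lower density of $\mathscr C$ supplies a constant $\delta>0$ with $C(N)\ge \delta N$ for all large $N$. Combining these gives $B(N)\le N\le \delta^{-1}C(N)$, whence $B(N)^p\ll C(N)^p$ and consequently
\[
I_p(N;\mathscr C)\ll N^{-1}C(N)^p,
\]
which is exactly the bound \eqref{1.3} required of a strongly subconvex $L^p$-set.

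The argument is essentially mechanical, and there is no serious obstacle to overcome; the one point genuinely worth stressing is that the positive lower density hypothesis is not a mere technical convenience. Indeed, by the first observation following Definition \ref{definition1.1}, every strongly subconvex $L^p$-set has positive lower density, so the hypothesis is in fact necessary for the conclusion to have any chance of holding. It is precisely this hypothesis that converts the comparison of $I_p(N;\mathscr C)$ against $B(N)$ into the self-referential bound against $C(N)$. The parallel statement for weakly subconvex $L^p$-sets follows by the same steps, inserting a factor $N^\varepsilon$ throughout, in accordance with the remark opening this section.
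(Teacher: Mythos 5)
Your proof is correct and follows essentially the same route as the paper: the same decomposition $g_{\mathscr C}=g_{\mathscr B}-g_{\mathscr A}$, a triangle-inequality step in $L^p$ (the paper does this implicitly where you invoke Minkowski explicitly), and the same use of the positive lower density of $\mathscr C$ to convert the bound $\ll N^{p-1}$ into $\ll N^{-1}C(N)^p$. No gaps; your closing remark that the density hypothesis is necessary is also accurate.
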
 

\begin{proof} One has
\[
\sum_{n\in \mathscr C(N)}e(n\alpha )=\sum_{n\in \mathscr B(N)}e(n\alpha) -\sum_{n\in \mathscr A(N)}e(n\alpha).
\]
It therefore follows from \eqref{1.1} and \eqref{1.3} that
\[
I_p(N;\mathscr C)\ll I_p(N;\mathscr B)+I_p(N;\mathscr A)\ll N^{p-1}.
\]
Hence, provided that $\mathscr C$ has positive lower density, we see that it is a strongly subconvex $L^p$-set.
\end{proof}

\begin{corollary}\label{corollary3.2} Suppose that $\mathscr A$ is a strongly subconvex $L^p$-set with $p>1$ for 
which $\mathscr A^c=\mathbb N\setminus \mathscr A$ has positive lower density. Then $\mathscr A^c$ is a 
strongly subconvex $L^p$-set.
\end{corollary}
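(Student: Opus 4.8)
The plan is to recognise this corollary as simply the special case $\mathscr B=\mathbb N$ of Theorem \ref{theorem3.1}, so that all the work reduces to verifying that the hypotheses of that theorem hold for this choice of ambient set. There is no new estimate to prove; one only assembles already-established facts.

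First I would record that $\mathbb N$ is itself a strongly subconvex $L^p$-set whenever $p>1$. This is precisely the content of Theorem \ref{theorem2.1}, which supplies the bound $\int_0^1\bigl|\sum_{1\le n\le N}e(n\alpha)\bigr|^p\d\alpha\ll N^{p-1}$, and since the counting function of $\mathbb N$ satisfies $\lfloor N\rfloor\asymp N$, this is exactly the defining inequality \eqref{1.3} with $\mathscr A$ replaced by $\mathbb N$. Thus $\mathbb N$ may legitimately play the role of the set $\mathscr B$ in Theorem \ref{theorem3.1}.

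Next I would observe that $\mathscr A\subseteq\mathbb N$ holds trivially, and that in the notation of Theorem \ref{theorem3.1} the associated difference set is $\mathscr C=\mathbb N\setminus\mathscr A=\mathscr A^c$. By hypothesis $\mathscr A^c$ has positive lower density, which is precisely the remaining condition demanded by Theorem \ref{theorem3.1}. Applying that theorem with $\mathscr B=\mathbb N$ then yields at once that $\mathscr C=\mathscr A^c$ is a strongly subconvex $L^p$-set, completing the argument.

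Since every ingredient is either an instance of an already-proved result (Theorems \ref{theorem2.1} and \ref{theorem3.1}) or an immediate set-theoretic identity, there is no substantive obstacle to overcome. The only point meriting a moment's care is confirming that the two-set complement theorem applies verbatim with the ambient set taken to be all of $\mathbb N$, namely that the positive-lower-density requirement imposed on $\mathscr C$ in Theorem \ref{theorem3.1} is matched exactly by the standing hypothesis on $\mathscr A^c$ here; it is, and so the corollary follows directly.
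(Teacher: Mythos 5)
Your proposal is correct and follows exactly the paper's own argument: the paper likewise observes that $\mathbb N$ is a strongly subconvex $L^p$-set for $p>1$ (by Theorem \ref{theorem2.1}) and then invokes Theorem \ref{theorem3.1} with $\mathscr B=\mathbb N$ and $\mathscr C=\mathscr A^c$. Nothing is missing; the verification of the positive-lower-density hypothesis and the role of Theorem \ref{theorem2.1} are handled just as in the paper.
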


\begin{proof} The set $\mathbb N$ is a strongly subconvex $L^q$-set whenever $q>1$, and hence also a strongly 
subconvex $L^p$-set. Thus, the conclusion is immediate from Theorem \ref{theorem3.1}.
\end{proof}

\begin{corollary}\label{corollary3.3} When $r>s\ge 2$, the set of $r$-free numbers that are not $s$-free, namely 
$\mathscr N_r\setminus \mathscr N_s$, is a strongly subconvex $L^p$-set whenever $p>1+1/s$.
\end{corollary}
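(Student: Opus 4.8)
The plan is to read this off from Theorem~\ref{theorem3.1} by setting $\mathscr B=\mathscr N_r$ and $\mathscr A=\mathscr N_s$. First I would verify the containment $\mathscr N_s\subseteq \mathscr N_r$ required by that theorem: if $r>s$ then divisibility by $\pi^r$ forces divisibility by $\pi^s$, so $s$-freeness is the more restrictive condition and every $s$-free number is automatically $r$-free. Consequently $\mathscr C=\mathscr B\setminus\mathscr A$ is exactly the set $\mathscr N_r\setminus\mathscr N_s$ appearing in the statement.

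Next I would confirm that both $\mathscr A$ and $\mathscr B$ are strongly subconvex $L^p$-sets for the range of $p$ in question. Since $r>s$ we have $1+1/r<1+1/s$, so the hypothesis $p>1+1/s$ implies in particular $p>1+1/r$. By Example~\ref{example2.2}, the set $\mathscr N_r$ is therefore a strongly subconvex $L^p$-set, and $\mathscr N_s$ is one as well. Thus the arithmetic hypotheses of Theorem~\ref{theorem3.1} are met, and the conclusion will follow once we know that $\mathscr C$ has positive lower density.

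Establishing that positive lower density is the only substantive step, and I expect it to be routine. Here I would invoke the classical count $\text{card}(\mathscr N_r\cap[1,N])=\zeta(r)^{-1}N+O(N^{1/r})$, together with the analogous asymptotic for $\mathscr N_s$. Subtracting these, the counting function of $\mathscr C$ satisfies $C(N)=\bigl(\zeta(r)^{-1}-\zeta(s)^{-1}\bigr)N+o(N)$. Because $\zeta$ is strictly decreasing on $(1,\infty)$ and $r>s$, one has $\zeta(r)<\zeta(s)$, whence $\zeta(r)^{-1}-\zeta(s)^{-1}>0$ and $\mathscr C$ indeed has positive density, hence positive lower density. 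With this in hand, Theorem~\ref{theorem3.1} applies directly and yields that $\mathscr N_r\setminus\mathscr N_s$ is a strongly subconvex $L^p$-set whenever $p>1+1/s$. No genuine obstacle arises: the argument is essentially a verification of the hypotheses of Theorem~\ref{theorem3.1}, the only nonformal ingredient being the standard density of the $r$-free integers.
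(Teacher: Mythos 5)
Your proposal is correct and takes essentially the same route as the paper: identify $\mathscr N_s\subseteq \mathscr N_r$, note via Example \ref{example2.2} that both are strongly subconvex $L^p$-sets once $p>1+1/s$ (hence also $p>1+1/r$), and apply Theorem \ref{theorem3.1}. The only difference is that you spell out the positive lower density of $\mathscr N_r\setminus\mathscr N_s$ using the classical asymptotics $\zeta(r)^{-1}N+O(N^{1/r})$ and the monotonicity of $\zeta$, a fact the paper simply asserts without proof.
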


\begin{proof} As noted in Example \ref{example2.2}, both $\mathscr N_r$ and $\mathscr N_s$ are strongly 
subconvex $L^p$-sets whenever $p>1+1/s$. Since $\mathscr N_s\subset \mathscr N_r$ and 
$\mathscr N_r\setminus \mathscr N_s$ has positive density, the desired conclusion follows from Theorem 
\ref{theorem3.1}.
\end{proof}

\par Next, we turn our attention to translations and dilations of sets.

\begin{theorem}\label{theorem3.4}
Suppose that $\mathscr A$ is a strongly subconvex $L^p$-set with $p>1$. Then, whenever $q\in \mathbb N$ and 
$a\in \mathbb Z$, the set $\mathscr B=(q\mathscr A+a)\cap \mathbb N$ is also a strongly subconvex $L^p$-set.
\end{theorem}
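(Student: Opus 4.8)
The plan is to reduce the exponential sum over $\mathscr{B}$ to a dilated exponential sum over $\mathscr{A}$, and then to exploit the dilation-invariance of the relevant $L^p$ integral. Throughout, implicit constants are permitted to depend on the fixed parameters $q$ and $a$.

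First I would record the basic identity. Writing $N$ for the parameter attached to $\mathscr{B}$ and setting $M=(N-a)/q$, the condition $m=qn+a\in[1,N]$ with $n\in\mathscr{A}$ is equivalent to $n\in\mathscr{A}(M)$, save for at most $O(1)$ values of $n$ near the lower end (these occur only when $a<0$, and there are at most $\lceil(1-a)/q\rceil=O(1)$ of them). Extracting the phase $e(a\alpha)$ from the summand $e((qn+a)\alpha)=e(a\alpha)e(nq\alpha)$, this gives
\[
\sum_{m\in\mathscr{B}(N)}e(m\alpha)=e(a\alpha)\sum_{n\in\mathscr{A}(M)}e(nq\alpha)+O(1).
\]
Since $|e(a\alpha)|=1$, Minkowski's inequality (this is where $p\ge1$ enters) yields
\[
I_p(N;\mathscr{B})\ll \int_0^1\Bigl|\sum_{n\in\mathscr{A}(M)}e(nq\alpha)\Bigr|^p\d\alpha+O(1).
\]
The key observation is that the integrand here is periodic in $\alpha$ with period $1/q$, since $e(nq(\alpha+1/q))=e(nq\alpha)$; hence the substitution $\beta=q\alpha$ shows that this integral equals $I_p(M;\mathscr{A})$ exactly, so the dilation by $q$ costs nothing.

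It then remains to invoke the hypothesis. By the bound \eqref{1.3} for the strongly subconvex $L^p$-set $\mathscr{A}$, one has $I_p(M;\mathscr{A})\ll M^{-1}A(M)^p$, and by the positive-lower-density remark following Definition \ref{definition1.1} one has $A(M)\gg M$. Counting the elements of $\mathscr{B}(N)$ by the same identity gives $B(N)=A(M)+O(1)\asymp A(M)$, where $B(N)=\mathrm{card}(\mathscr{B}(N))$; and since $q$ and $a$ are fixed we have $M\asymp N$. Consequently $M^{-1}A(M)^p\asymp N^{-1}B(N)^p\asymp N^{p-1}$. Because $p>1$, this main term tends to infinity and so absorbs the additive $O(1)$ once $N$ is large, giving $I_p(N;\mathscr{B})\ll N^{-1}B(N)^p$. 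Finally, $B(N)\asymp N$ records that $\mathscr{B}$ has positive lower density, which completes the verification that $\mathscr{B}$ is a strongly subconvex $L^p$-set.

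There is no serious obstacle here; the argument is essentially bookkeeping built on the dilation-invariance of the integral. The only points demanding care are the handling of the $O(1)$ boundary terms, which is precisely what the hypothesis $p>1$ is for, since it is $p>1$ that makes the main term $N^{p-1}$ grow and swallow them, and the verification that passing from $\alpha$ to $q\alpha$ leaves the $L^p$ integral unchanged via its $1/q$-periodicity.
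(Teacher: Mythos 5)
Your proposal is correct and follows essentially the same route as the paper: both reduce the sum over $\mathscr B(N)$ to a dilated sum over $\mathscr A(N/q)$ up to an $O(1)$ boundary term, use the $1/q$-periodicity of $\alpha\mapsto\sum_{n\in\mathscr A(M)}e(nq\alpha)$ to see that the change of variable $\beta=q\alpha$ leaves the $L^p$ integral unchanged, and then invoke the hypothesis \eqref{1.3} together with $p>1$ to absorb the error terms. Your write-up merely spells out details the paper leaves implicit (the boundary count, $B(N)\asymp A(M)$, and the absorption of the $O(1)$).
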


\begin{proof} We have
\[
\sum_{n\in \mathscr B(N)}e(n\alpha)=\sum_{n\in \mathscr A(N/q)}e((qn+a)\alpha)+O(1),
\]
whence, by a change of variable, it follows from \eqref{1.1} that
\begin{align*}
I_p(N;\mathscr B)&\ll 1+\int_0^1\biggl| \sum_{n\in \mathscr A(N/q)}e(qn\alpha)\biggr|^p\d\alpha \\
&\ll 1+I_p(N/q;\mathscr A)\ll N^{p-1}.
\end{align*}
The desired conclusion follows at once.
\end{proof}

Perturbations of subconvex $L^p$-sets, in which sufficiently few elements are removed or added, also yield 
subconvex $L^p$-sets.

\begin{theorem}\label{theorem3.5}
Suppose that $\mathscr A$ is a strongly subconvex $L^p$-set with $p>1$. Let $\mathscr B$ and $\mathscr C$ be 
subsets of $\mathbb N$ with $\mathscr B\subset \mathscr A$ and 
$\mathscr C\subset \mathbb N\setminus \mathscr A$ satisfying the property that
\begin{equation}\label{3.1}
B(N)+C(N)\ll N^{2-2/p}.
\end{equation}
Then the perturbed set $\mathscr A'=(\mathscr A\setminus \mathscr B)\cup \mathscr C$ is also a strongly 
subconvex $L^p$-set.
\end{theorem}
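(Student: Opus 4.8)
The plan is to bound $I_p(N;\mathscr A')$ directly, by decomposing the exponential sum over $\mathscr A'$ and estimating the resulting pieces separately. Writing $F_{\mathscr S}(\alpha)=\sum_{n\in \mathscr S(N)}e(n\alpha)$, the decomposition $\mathscr A'=(\mathscr A\setminus \mathscr B)\cup \mathscr C$ together with the inclusions $\mathscr B\subset \mathscr A$ and $\mathscr C\subset \mathbb N\setminus \mathscr A$ yields
\[
F_{\mathscr A'}(\alpha)=F_{\mathscr A}(\alpha)-F_{\mathscr B}(\alpha)+F_{\mathscr C}(\alpha).
\]
Since $1<p<2$, an application of the elementary inequality $|x+y+z|^p\ll |x|^p+|y|^p+|z|^p$ followed by integration over $[0,1]$ gives
\[
I_p(N;\mathscr A')\ll I_p(N;\mathscr A)+I_p(N;\mathscr B)+I_p(N;\mathscr C),
\]
so that it suffices to establish that each term on the right is $\ll N^{p-1}$.

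The first term is immediate from the hypothesis that $\mathscr A$ is a strongly subconvex $L^p$-set: by \eqref{1.3} and $A(N)\ll N$ one has $I_p(N;\mathscr A)\ll N^{-1}A(N)^p\ll N^{p-1}$. The perturbation terms are where the size condition \eqref{3.1} must be brought into play, and for these I would invoke only the crudest cancellation available. Orthogonality supplies the $L^2$ identity $I_2(N;\mathscr B)=B(N)$, and since $1<p<2$ an application of H\"older's inequality with conjugate exponents $2/p$ and $2/(2-p)$ yields
\[
I_p(N;\mathscr B)=\int_0^1 |F_{\mathscr B}(\alpha)|^p\d\alpha \le \Bigl( \int_0^1 |F_{\mathscr B}(\alpha)|^2\d\alpha \Bigr)^{p/2}=B(N)^{p/2}.
\]
The exponent in \eqref{3.1} is now seen to be exactly calibrated: the bound $B(N)\ll N^{2-2/p}$ gives $B(N)^{p/2}\ll N^{(2-2/p)(p/2)}=N^{p-1}$, and the identical argument applied to $\mathscr C$ furnishes $I_p(N;\mathscr C)\ll C(N)^{p/2}\ll N^{p-1}$.

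Substituting these three estimates into the displayed bound for $I_p(N;\mathscr A')$ delivers $I_p(N;\mathscr A')\ll N^{p-1}$. It then remains only to confirm that $\mathscr A'$ has positive lower density, so that $A'(N)\asymp N$ and the bound just obtained assumes the required shape $I_p(N;\mathscr A')\ll N^{-1}A'(N)^p$ of \eqref{1.3}. This is routine: the inequality $1<p<2$ forces $2-2/p<1$, so \eqref{3.1} ensures $B(N)+C(N)=o(N)$, whence $A'(N)\ge A(N)-B(N)\gg N$ by the positive lower density of $\mathscr A$ recorded after Definition \ref{definition1.1}. I anticipate no serious obstacle in this argument; the only substantive point is the recognition that the threshold $2-2/p$ in \eqref{3.1} is precisely the exponent for which the trivial $L^2$-to-$L^p$ interpolation of the perturbation sums matches the target $N^{p-1}$, so that any weaker size hypothesis would upset this balance.
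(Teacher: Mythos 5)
Your proof is correct and follows essentially the same route as the paper's: the same three-term decomposition of the exponential sum, the same H\"older-plus-orthogonality bound $I_p(N;\mathscr B)\le I_2(N;\mathscr B)^{p/2}=B(N)^{p/2}$, and the same calibration of the exponent in \eqref{3.1} against the target $N^{p-1}$. Your closing verification that $\mathscr A'$ retains positive lower density (which is needed to recast the bound $I_p(N;\mathscr A')\ll N^{p-1}$ in the form $N^{-1}A'(N)^p$ required by \eqref{1.3}) is a point the paper leaves implicit, and making it explicit is a sound addition rather than a deviation.
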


\begin{proof} One has
\[
\sum_{n\in \mathscr A'(N)}e(n\alpha)=\sum_{n\in \mathscr A(N)}e(n\alpha)-\sum_{n\in \mathscr B(N)}e(n\alpha)+
\sum_{n\in \mathscr C(N)}e(n\alpha),
\]
whence, it follows from \eqref{1.1} that
\[
I_p(N;\mathscr A')\ll I_p(N;\mathscr A)+I_p(N;\mathscr B)+I_p(N;\mathscr C).
\]
Since we may assume that $p<2$, it follows from H\"older's inequality and orthogonality that one has
\[
I_p(N;\mathscr B)\le I_2(N;\mathscr B)^{p/2}\le B(N)^{p/2},
\]
with a similar upper bound available for the mean value over the exponential sum associated with the set 
$\mathscr C(N)$. Since $\mathscr A$ is a strongly subconvex $L^p$-set, it therefore follows from the hypothesis 
\eqref{3.1} that
\begin{align*}
I_p(N;\mathscr A')&\ll N^{p-1}+B(N)^{p/2}+C(N)^{p/2}\\
&\ll N^{p-1}+(N^{2-2/p})^{p/2}\ll N^{p-1}.
\end{align*}
This confirms that $\mathscr A'$ is a strongly subconvex $L^p$-set.  
\end{proof}

A particularly attractive feature of subconvex $L^p$-sets is that intersections of such sets, provided that these 
intersections remain of positive lower density, are likewise subconvex $L^q$-sets, for an appropriate choice of $q$. 
The condition here that the intersections have positive lower density is natural. For example, the set $\mathscr N_2$ 
consisting of square-free numbers plainly has the property that $\mathscr N_2\cap \mathscr N_2^c$ is empty. So 
although $\mathscr N_2$ and $\mathscr N_2^c$ are both weakly subconvex $L^{3/2}$-sets, their intersection is 
not. The problem of determining whether or not two subconvex $L^p$-sets $\mathscr A$ and $\mathscr B$ have 
an intersection with positive lower density may have a number-theoretic flavour. In particular, in many examples, 
the circle method will provide a viable means of establishing whether or not $\mathscr A\cap \mathscr B$ has 
positive lower density.\par

It is convenient in the proof of the next theorem, and elsewhere, to define an exponential sum 
$f(\alpha;\mathscr D)=f_N(\alpha;\mathscr D)$ associated with each subset $\mathscr D$ of the natural numbers. 
Thus, we write
\begin{equation}\label{3.2}
f_N(\alpha;\mathscr D)=\sum_{n\in \mathscr D(N)}e(n\alpha).
\end{equation}

\begin{theorem}\label{theorem3.6}
Let $\mathscr A$ be a strongly subconvex $L^q$-set with $q>1$, and let $\mathscr B$ be a strongly subconvex 
$L^r$-set with $r>1$. Define the real number $p$ via the relation
\begin{equation}\label{3.3}
\frac{1}{p}=\frac{1}{q}+\frac{1}{r}-1.
\end{equation}
Suppose that $\mathscr A\cap \mathscr B$ has positive lower density, and in addition one has
\begin{equation}\label{3.4}
\frac{3}{2}<\frac{1}{q}+\frac{1}{r}<2.
\end{equation}
Then the set $\mathscr A\cap \mathscr B$ is a strongly subconvex $L^p$-set.
\end{theorem}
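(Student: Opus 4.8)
The plan is to exploit the fact that the characteristic function of $\mathscr A\cap \mathscr B$ is the pointwise product of those of $\mathscr A$ and $\mathscr B$, which on the Fourier side turns into a convolution of the associated exponential sums. With $f_N$ as in \eqref{3.2}, I would first record the identity
\[
f_N(\alpha;\mathscr A\cap \mathscr B)=\int_0^1 f_N(\alpha-\beta;\mathscr A)f_N(\beta;\mathscr B)\d\beta .
\]
This follows immediately on expanding both exponential sums on the right-hand side and integrating over $\beta$: the integral $\int_0^1 e((n-m)\beta)\d\beta$ vanishes unless $m=n$, leaving precisely the diagonal contribution summed over $m\in \mathscr A(N)\cap \mathscr B(N)=(\mathscr A\cap \mathscr B)(N)$.

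With this convolution structure in hand, the decisive step is an appeal to Young's convolution inequality on the circle. The relation \eqref{3.3} is exactly $\tfrac1p=\tfrac1q+\tfrac1r-1$; moreover $q,r>1$, while the condition \eqref{3.4} ensures $1<p<2$ (the lower bound $1/q+1/r>3/2$ forcing $p<2$, and the upper bound $1/q+1/r<2$ forcing $p>1$). Hence all three exponents lie in $[1,\infty)$ and Young's inequality applies, giving
\[
\Bigl( \int_0^1 |f_N(\alpha;\mathscr A\cap \mathscr B)|^p\d\alpha\Bigr)^{1/p}\le \Bigl( \int_0^1 |f_N(\alpha;\mathscr A)|^q\d\alpha\Bigr)^{1/q}\Bigl( \int_0^1 |f_N(\beta;\mathscr B)|^r\d\beta\Bigr)^{1/r}.
\]
In the notation of \eqref{1.1} this reads $I_p(N;\mathscr A\cap \mathscr B)^{1/p}\le I_q(N;\mathscr A)^{1/q}I_r(N;\mathscr B)^{1/r}$.

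I would then insert the strongly subconvex hypotheses. Since $\mathscr A$ and $\mathscr B$ have positive lower density, their counting functions satisfy $A(N),B(N)\asymp N$, so \eqref{1.3} gives $I_q(N;\mathscr A)\ll N^{q-1}$ and $I_r(N;\mathscr B)\ll N^{r-1}$, whence
\[
I_p(N;\mathscr A\cap \mathscr B)^{1/p}\ll N^{1-1/q}N^{1-1/r}=N^{2-1/q-1/r}.
\]
The agreeable feature of the argument is that the exponents now match exactly. Writing $s=1/q+1/r$, the relation \eqref{3.3} gives $p=1/(s-1)$, so that $p(2-s)=(2-s)/(s-1)=p-1$, and therefore $I_p(N;\mathscr A\cap \mathscr B)\ll N^{p(2-s)}=N^{p-1}$. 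Invoking once more the positive lower density of $\mathscr A\cap \mathscr B$, so that $(\mathscr A\cap \mathscr B)(N)\gg N$, this is precisely the bound $I_p(N;\mathscr A\cap \mathscr B)\ll N^{-1}((\mathscr A\cap \mathscr B)(N))^p$ demanded by Definition \ref{definition1.1}(b), confirming that $\mathscr A\cap \mathscr B$ is a strongly subconvex $L^p$-set.

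There is no serious analytic obstacle here: the whole content is the convolution identity, together with the recognition that \eqref{3.3} is exactly the Young exponent relation and that \eqref{3.4} places $p$ in the admissible range $(1,2)$. The only point needing a little care is the exponent bookkeeping verifying that $p(2-s)$ collapses to $p-1$, which is what renders the conclusion sharp rather than merely non-trivial.
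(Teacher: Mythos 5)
Your proposal is correct and is essentially the paper's own approach: the paper opens its proof by remarking that the result ``is essentially a consequence of Young's convolution inequality,'' and its detailed argument is just the standard trilinear H\"older proof of that inequality applied to the very same convolution identity $f_N(\alpha;\mathscr A\cap\mathscr B)=\int_0^1 f_N(\alpha-\beta;\mathscr A)f_N(\beta;\mathscr B)\d\beta$, followed by the same exponent bookkeeping and the same use of positive lower density to match Definition \ref{definition1.1}(b). The only cosmetic difference is that you invoke Young's inequality as a black box, whereas the paper reproves it inline, arriving at the self-referential bound $I_p(N;\mathscr A\cap\mathscr B)\ll N^{1-1/p}I_p(N;\mathscr A\cap\mathscr B)^{1-1/p}$ and solving it; both routes are sound and yield the identical conclusion.
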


\begin{proof} The desired conclusion is essentially a consequence of Young's convolution inequality (see 
\cite{BL1976}, for example), though the argument is sufficiently simple and instructive that we take the opportunity 
to describe our proof in detail. We recall that as a consequence of Definition \ref{definition1.1}, it is implicit that the 
strong subconvexity of $\mathscr A$ and $\mathscr B$ implies that $q<2$ and $r<2$. Define the real number $p$ 
via the relation \eqref{3.3}, and note that the condition \eqref{3.4} then ensures that $1<p<2$. Also, put 
$\mathscr C=\mathscr A\cap \mathscr B$. Then on making use of the notation \eqref{3.2}, it follows from 
orthogonality that one has
\[
f(\alpha;\mathscr C)=\int_0^1 f(\alpha-\beta;\mathscr A)f(\beta ;\mathscr B)\d\beta .
\]

\par Next, observe that as a consequence of the last relation, it follows from \eqref{1.1} that
\begin{align*}
I_p(N;\mathscr C)&=\int_0^1 |f(\alpha;\mathscr C)|^{p-1}\Bigl| \int_0^1 f(\alpha-\beta;\mathscr A)
f(\beta;\mathscr B)\d\beta \Bigr| \d\alpha \\
&\le \int_0^1 \int_0^1 |f(\alpha;\mathscr C)|^{p-1}|f(\alpha-\beta;\mathscr A)f(\beta;\mathscr B)|\d\beta \d\alpha .
\end{align*}
An application of H\"older's inequality therefore reveals that
\begin{equation}\label{3.5}
I_p(N;\mathscr C)\le T_1^{\frac{1}{q}-\frac{1}{p}}T_2^{\frac{1}{r}-\frac{1}{p}}T_3^{\frac{1}{p}},
\end{equation}
where
\begin{align*}
T_1&=\int_0^1\int_0^1 |f(\alpha;\mathscr C)|^p|f(\alpha-\beta;\mathscr A)|^q\d\beta \d\alpha ,\\
T_2&=\int_0^1\int_0^1 |f(\alpha;\mathscr C)|^p|f(\beta;\mathscr B)|^r\d\beta \d\alpha ,\\
T_3&=\int_0^1\int_0^1 |f(\alpha-\beta;\mathscr A)|^q|f(\beta;\mathscr B)|^r\d\beta \d\alpha .
\end{align*}
Here, we have made use of the observation that
\[
\frac{1}{q}-\frac{1}{p}=1-\frac{1}{r},
\]
so that $0<1/q-1/p<1$, and similarly $0<1/r-1/p<1$.  Moroever,
\[
\Bigl( \frac{1}{q}-\frac{1}{p}\Bigr) +\Bigl( \frac{1}{r}-\frac{1}{p}\Bigr) +\frac{1}{p}=\frac{1}{q}+\frac{1}{r}-\frac{1}{p}=1.
\]
Hence, our application of H\"older's inequality is legitimate.\par

By applying a change of variables, and utilising the definition \eqref{1.1} and the fact that $\mathscr A$ is a strongly 
subconvex $L^q$-set, we find that
\[
T_1=I_p(N;\mathscr C)I_q(N;\mathscr A)\ll N^{q-1}I_p(N;\mathscr C).
\]
Similarly, one sees that $T_2\ll N^{r-1}I_p(N;\mathscr C)$, and
\[
T_3=I_q(N;\mathscr A)I_r(N;\mathscr B)\ll N^{q+r-2}.
\]
We therefore deduce from \eqref{3.5} that
\begin{align}
I_p(N;\mathscr C)&\ll \left( N^{q-1}I_p(N;\mathscr C)\right)^{\frac{1}{q}-\frac{1}{p}} 
\left( N^{r-1}I_p(N;\mathscr C)\right)^{\frac{1}{r}-\frac{1}{p}} \left( N^{q+r-2}\right)^{1/p}\notag \\
&\ll N^{1-1/p}I_p(N;\mathscr C)^{1-1/p}.\label{3.6}
\end{align}

\par Our hypothesis that the set $\mathscr C=\mathscr A\cap \mathscr B$ has positive lower density ensures that 
when $0\le \alpha \le (100N)^{-1}$, one has
\[
|f(\alpha;\mathscr C)|=\biggl| \sum_{n\in \mathscr C(N)}e(n\alpha)\biggr|\ge \frac{1}{2}|C(N)|\gg N,
\]
whence
\[
I_p(N;\mathscr C)\ge \int_0^{1/(100N)}|f(\alpha ;\mathscr C)|^p\d\alpha \gg N^{p-1}.
\]
Then we conclude from \eqref{3.6} that $I_p(N;\mathscr C)^{1/p}\ll N^{1-1/p}$, whence 
$I_p(N;\mathscr C)\ll N^{p-1}$. This confirms that $\mathscr A\cap \mathscr B$ is a strongly subconvex $L^p$-set.
\end{proof}

This conclusion yields an immediate corollary for intersections of subconvex $L^p$-sets with Beatty sequences.

\begin{corollary}\label{corollary3.7}
Let $\mathscr A$ be a strongly subconvex $L^q$-set for some real number $q\in (1,2)$. Suppose that $\alpha$ 
and $\beta$ are real numbers with $\alpha>0$ such that $1/\alpha$ is of finite Diophantine type. Then the set 
$\mathscr A\cap \mathscr B(\alpha,\beta)$ is a strongly subconvex $L^p$-set whenever $p$ is a real number with 
$p\in (q,2)$ and $\mathscr A\cap \mathscr B(\alpha,\beta)$ has positive lower density.
\end{corollary}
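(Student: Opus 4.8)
The plan is to apply Theorem \ref{theorem3.6}, taking the second set there to be the Beatty set $\mathscr B(\alpha,\beta)$. The first thing I would record is that, since $1/\alpha$ is of finite Diophantine type, Theorem \ref{theorem2.3} tells us that $\mathscr B(\alpha,\beta)$ is a strongly subconvex $L^r$-set for \emph{every} $r\in(1,2)$. This gives complete freedom in choosing the exponent $r$, and the whole game is to pick $r$ so that the output exponent produced by \eqref{3.3} is exactly the prescribed $p$.

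Concretely, given $p\in(q,2)$, I would define $r$ through the relation $1/r=1+1/p-1/q$, which is just the reciprocal form $1/p=1/q+1/r-1$ demanded by \eqref{3.3}. The remaining work is the routine verification that this $r$ lies in $(1,2)$ and that the side condition \eqref{3.4} is satisfied. Since $p>q$ we have $1/p<1/q$, whence $1/r<1$ and so $r>1$; since $q>1$ and $p<2$ we have $1/q-1/p<1-\tfrac12=\tfrac12$, whence $1/r>\tfrac12$ and so $r<2$. Moreover a direct substitution gives $1/q+1/r=1+1/p$, and because $p\in(1,2)$ this quantity lies strictly between $3/2$ and $2$, which is precisely \eqref{3.4}. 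With these checks in place I would invoke Theorem \ref{theorem3.6} directly: $\mathscr A$ is a strongly subconvex $L^q$-set, $\mathscr B(\alpha,\beta)$ is a strongly subconvex $L^r$-set, the exponents obey \eqref{3.3} and \eqref{3.4}, and by hypothesis $\mathscr A\cap\mathscr B(\alpha,\beta)$ has positive lower density. The theorem then delivers that $\mathscr A\cap\mathscr B(\alpha,\beta)$ is a strongly subconvex $L^p$-set.

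There is essentially no analytic content to overcome here, since all of the hard work is already absorbed into Theorems \ref{theorem2.3} and \ref{theorem3.6}; the proof is pure bookkeeping of Lebesgue exponents. The one point I would handle with a little care is the matching of ranges: one wants to be sure that the stated interval $p\in(q,2)$ is exactly the range for which a legitimate $r\in(1,2)$ meeting \eqref{3.4} exists. This is transparent from the identity $1/q+1/r=1+1/p$, since the lower bound $1/q+1/r>3/2$ is equivalent to $p<2$ while $r>1$ is equivalent to $p>q$; thus the constraint \eqref{3.4} recovers precisely $p\in(q,2)$, with the endpoint behaviour $r\to 1^+$ corresponding to $p\to q$ and $1/q+1/r\to 3/2$ corresponding to $p\to 2$. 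Hence no admissible value of $p$ is lost, and the corollary follows.
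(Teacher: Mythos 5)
Your proposal is correct and follows essentially the same route as the paper: invoke Theorem \ref{theorem2.3} to make $\mathscr B(\alpha,\beta)$ a strongly subconvex $L^r$-set for any $r\in(1,2)$, then feed this into Theorem \ref{theorem3.6} via the exponent relation \eqref{3.3}, checking \eqref{3.4}. If anything your write-up is slightly more explicit than the paper's, which phrases the choice as ``take $r$ sufficiently close to $1$'' rather than solving $1/r=1+1/p-1/q$ for the given $p$ and verifying directly that $r\in(1,2)$ and $1/q+1/r=1+1/p\in(3/2,2)$, as you do.
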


\begin{proof} The Beatty sequence $\mathscr B(\alpha,\beta)$ is a strongly subconvex $L^r$-set whenever 
$1<r<2$, as a consequence of Theorem \ref{theorem2.3}. Choose any such value of $r$ sufficiently close to $1$, 
and define the real number $p$ by means of the relation \eqref{3.3}. Then we may take $p$ as close as we like to 
$q$ by choosing $r$ suitably close to $1$. The conclusion of Theorem \ref{theorem3.6} then shows that 
$\mathscr A\cap \mathscr B(\alpha,\beta)$ is a strongly subconvex $L^p$-set provided that this set has positive 
density and condition \eqref{3.4} is satisfied. Since $q>1$ and $r>1$, the upper bound on $1/q+1/r$ in the latter 
condition is automatically satisfied. Moreover, one has $q<2$, so the lower bound $1/q+1/r>3/2$ is satisfied on 
taking $r$ sufficiently close to $1$. The desired conclusion therefore follows.
\end{proof}

The discussion of this section exhibits numerous means of generating new examples of subconvex $L^p$-sets from 
any existing examples. One can also splice together different sets by considering (in the obvious sense) such hyprid 
examples as
\[
\mathscr D(N)=\mathscr A(N/3)\cup (\mathscr B(2N/3)\setminus \mathscr B(N/3))\cup 
(\mathscr C(N)\setminus \mathscr C(2N/3)),
\]
when given subconvex $L^p$-sets $\mathscr A$, $\mathscr B$ and $\mathscr C$. A basic problem worthy of 
attention would be to characterise subconvex $L^p$-sets.

\begin{problem}\label{problem3.8} Suppose that $p$ is a real number with $1<p<2$. Characterise, to the extent 
possible, strongly subconvex $L^p$-sets.
\end{problem}

\section{Weyl sums over weakly subconvex $L^p$-sets, I}
We begin with the most striking of the applications of subconvex $L^p$-sets to exponential sums over polynomials 
having summands restricted to these $L^p$-sets, demonstrating how to establish the upper bound \eqref{1.9} of 
Theorem \ref{theorem1.2}.

\begin{proof}[Proof of Theorem \ref{theorem1.2}]
 Let $\mathscr A$ be a weakly subconvex $L^p$-set for some real number $p$ with $1\le p\le 4/3$. We may 
suppose that $k\ge 3$, and that $a\in \mathbb Z$ and $q\in \mathbb N$ satisfy $(a,q)=1$ and 
$|\alpha_k-a/q|\le q^{-2}$. We recall the definition \eqref{1.6}, and introduce the exponential sum 
$F_k(\boldsymbol \alpha)=F_k(\boldsymbol \alpha;N)$, defined by 
\begin{equation}\label{4.1}
F_k(\boldsymbol \alpha;N)=\sum_{n\in \mathscr A(N)}e(\psi(n;\boldsymbol \alpha)).
\end{equation}
We also make use of the exponential sum $g(\alpha)=f_N(\alpha;\mathscr A)$ defined via \eqref{3.2}, so that
\begin{equation}\label{4.2}
g(\alpha)=\sum_{n\in \mathcal A(N)}e(n\alpha),
\end{equation}
and introduce the exponential sum
\begin{equation}\label{4.3}
G_k(\boldsymbol \alpha ,\beta)=\sum_{1\le n\le N}e(\psi(n;\boldsymbol \alpha )+\beta n).
\end{equation}
We note that $G_k(\boldsymbol \alpha,\beta)$ is merely a more convenient form of an exponential sum that may 
be written in terms of the sum $\Psi_k(\boldsymbol \alpha;N)$ defined in \eqref{1.7}. It now follows from 
orthogonality that one has the fundamental relation
\begin{equation}\label{4.4}
F_k(\boldsymbol \alpha)=\int_0^1G_k(\boldsymbol \alpha,\beta)g(-\beta)\d\beta .
\end{equation}

\par An application of H\"older's inequality reveals that
\begin{equation}\label{4.5}
|F_k(\boldsymbol \alpha)|\le \Bigl( \int_0^1|G_k(\boldsymbol \alpha,\beta)|^4\d\beta \Bigr)^{1/4}
\Bigl( \int_0^1|g(\beta )|^{4/3}\d\beta \Bigr)^{3/4}.
\end{equation}
Here, our hypothesis that $\mathscr A$ is a weakly subconvex $L^p$-set for some real number $p$ with $p\le 4/3$ 
ensures that
\[
\int_0^1|g(\beta)|^{4/3}\d\beta \le N^{4/3-p}\int_0^1|g(\beta )|^p\d\beta \ll N^{1/3+\varepsilon}.
\]
Hence, we deduce from \eqref{4.5} that
\begin{equation}\label{4.6}
|F_k(\boldsymbol \alpha)|^4\ll N^{1+\varepsilon}\int_0^1|G_k(\boldsymbol \alpha,\beta)|^4\d\beta .
\end{equation}

\par By orthogonality, the integral on the right hand side of \eqref{4.6} may be interpreted in the form
\[
\int_0^1|G_k(\boldsymbol \alpha ,\beta)|^4\d\beta =\sum_{\substack{1\le n_1,n_2,n_3,n_4\le N\\ n_1+n_2=n_3+n_4
}}e(\psi(n_1;\boldsymbol \alpha)+\psi(n_2;\boldsymbol \alpha)-
\psi(n_3;\boldsymbol \alpha)-\psi(n_4;\boldsymbol \alpha)).
\]
For suitable integers $h_1$ and $h_2$ with $|h_i|<N$ $(i=1,2)$, one can write the summands $n_3$ and $n_4$ in
the shape
\[
n_3=n_1+h_1\quad \text{and}\quad n_4=n_1+h_2.
\]
The relation $n_1+n_2=n_3+n_4$ then implies that $n_2=n_1+h_1+h_2$. Thus, making use of the forward 
difference operator
\[
\Del_1(\varphi(x);h)=\varphi(x+h)-\varphi(x),
\]
and the second order operator
\[
\Del_2(\varphi(x);h_1,h_2)=\Del_1(\Del_1(\varphi(x);h_1);h_2),
\]
we see that
\[
\psi(n_1;\boldsymbol \alpha)+\psi(n_2;\boldsymbol \alpha)-\psi(n_3;\boldsymbol \alpha)-
\psi(n_4;\boldsymbol \alpha)=\Del_2(\psi(n_1;\boldsymbol \alpha);h_1,h_2).
\]
In this way, we conclude that
\begin{equation}\label{4.7}
\int_0^1|G_k(\boldsymbol \alpha ,\beta)|^4\d\beta =\sum_{|h_1|<N}\sum_{|h_2|<N}\sum_{n\in \mathscr I(N;\bfh)}
e(\Delta_2(\psi(n;\boldsymbol \alpha );h_1,h_2)),
\end{equation}
in which $\mathscr I(N;\bfh)$ denotes the interval of integers defined by
\[
\mathscr I(N;\bfh)=[1,N]\cap [1-h_1,N-h_1]\cap [1-h_2,N-h_2]\cap [1-h_1-h_2,N-h_1-h_2].
\]
The right hand side of \eqref{4.7} will be recognised as the principal output of a second order Weyl differencing 
process applied to the exponential sum $G_k(\boldsymbol \alpha ,0)=\Psi_k(\boldsymbol \alpha;N)$. The interested reader 
should direct their attention to \cite[Lemma 2.3]{Vau1997} and compare with the situation therein with $j=2$.\par

On substituting \eqref{4.7} into \eqref{4.6}, we see that
\[
|F_k(\boldsymbol \alpha)|^{2^2}\ll N^{1+\varepsilon}\sum_{|h_1|<N}\sum_{|h_2|<N}T_2,
\]
where
\[
T_2=\sum_{x\in \mathscr I_2}e(\Delta_2(\psi(n;\boldsymbol \alpha);h_1,h_2)),
\]
in which $\mathscr I_2=\mathscr I(N;\bfh)$. A comparison of this upper bound with the statement of 
\cite[Lemma 2.3]{Vau1997} reveals, just as in the proof of \cite[Lemma 2.4]{Vau1997}, that the application of $k-3$ 
further Weyl differencing steps delivers the bound
\[
|F_k(\boldsymbol \alpha)|^{2^{k-1}}\ll N^{2^{k-1}-k+\varepsilon}\sum_{|h_1|<N}\cdots \sum_{|h_{k-1}|<N}
\sum_{x\in \mathscr I_{k-1}}e(h_1\cdots h_{k-1}p_{k-1}(x;h_1,\ldots ,h_{k-1})),
\]
where $\mathscr I_{k-1}$ is a subinterval of integers lying within $[1,N]$, and
\[
p_{k-1}(x;h_1,\ldots ,h_{k-1})=k!\alpha_k(x+\tfrac{1}{2}h_1+\ldots+\tfrac{1}{2}h_{k-1})+(k-1)!\alpha_{k-1}.
\]
Hence, just as in the completion of the argument of the proof of \cite[Lemma 2.4]{Vau1997}, one concludes that
\[
|F_k(\boldsymbol \alpha)|^{2^{k-1}}\ll N^{2^{k-1}+\varepsilon}(q^{-1}+N^{-1}+qN^{-k}).
\]
Thus
\[
|F_k(\boldsymbol \alpha )|\ll N^{1+\varepsilon}(q^{-1}+N^{-1}+qN^{-k})^{2^{1-k}},
\]
confirming the desired conclusion \eqref{1.9}.
\end{proof}

We next address the proof of Theorem \ref{theorem1.3}. This we achieve in two stages, the second of which we 
defer to the next section. We recall the definition of the exponent $\sigma_p(k)$ recorded in the preamble to the 
statement of Theorem \ref{theorem1.3}.

\begin{lemma}\label{lemma4.1}
Suppose that $\mathscr A$ is a weakly subconvex $L^p$-set for some real number $p$ with $1\le p<2$, and 
further that $k\ge 2$. Let $(\alpha_0,\alpha_1,\ldots ,\alpha_k)\in \mathbb R^{k+1}$, and suppose that 
$a\in \mathbb Z$ and $q\in \mathbb N$ satisfy $(a,q)=1$ and $|\alpha_k-a/q|\le q^{-2}$. Then, for each 
$\varepsilon>0$ and each large real number $N$, one has
\begin{equation}\label{4.8}
\sum_{n\in \mathscr A(N)}e(\alpha_kn^k+\ldots +\alpha_1n+\alpha_0)\ll 
N^{1+\varepsilon}(q^{-1}+N^{-1}+qN^{-k})^{\sigma_p(k)}.
\end{equation}
\end{lemma}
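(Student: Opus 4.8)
The plan is to run the duality argument of the proof of Theorem~\ref{theorem1.2}, but to replace the fixed H\"older pairing $(4,4/3)$ by the conjugate pair $(p',p)$ with $p'=p/(p-1)$, and then to bound the resulting $L^{p'}$-moment of $G_k(\boldsymbol\alpha,\beta)$ by interpolating among three classical estimates. Throughout write $\Delta=q^{-1}+N^{-1}+qN^{-k}$, retain $F_k$, $g$ and $G_k$ from \eqref{4.1}--\eqref{4.3}, and recall the fundamental relation \eqref{4.4}. Adopt the convention $p'=\infty$ when $p=1$. H\"older's inequality applied to \eqref{4.4} gives
\[
|F_k(\boldsymbol\alpha)|\le\Bigl(\int_0^1|G_k(\boldsymbol\alpha,\beta)|^{p'}\d\beta\Bigr)^{1/p'}\Bigl(\int_0^1|g(\beta)|^p\d\beta\Bigr)^{1/p},
\]
where the first factor is read as $\sup_\beta|G_k(\boldsymbol\alpha,\beta)|$ when $p=1$. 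Since $\mathscr A$ is a weakly subconvex $L^p$-set, the bound \eqref{1.2} together with the trivial inequality $A(N)\le N$ gives $\int_0^1|g|^p\d\beta\ll N^{p-1+\eps}$, so the second factor is $\ll N^{1-1/p+\eps}$.

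Next I would record three moment bounds for $G_k$ to serve as interpolation anchors. Orthogonality supplies $\int_0^1|G_k|^2\d\beta=N$. Since $G_k(\boldsymbol\alpha,\beta)$ is a polynomial sum with the same leading coefficient $\alpha_k$ as $\Psi_k(\boldsymbol\alpha;N)$, the classical Weyl inequality \eqref{1.8} holds uniformly in $\beta$, giving $\sup_\beta|G_k|\ll N^{1+\eps}\Delta^{\nu_k}$ with $\nu_k=2^{1-k}$. The essential input is the fourth-moment bound
\[
\int_0^1|G_k(\boldsymbol\alpha,\beta)|^4\d\beta\ll N^{3+\eps}\Delta^{\mu_k},\qquad \mu_k=2^{3-k}\ (k\ge3),\quad\mu_2=1.
\]
For $k\ge3$ this is precisely the estimate for the right-hand side of \eqref{4.7} produced by the iterated Weyl differencing in the proof of Theorem~\ref{theorem1.2}, so no new differencing is needed. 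For $k=2$ it follows by evaluating \eqref{4.7} directly: here $\Delta_2(\alpha_2x^2;h_1,h_2)=2\alpha_2h_1h_2$ is constant in $x$, reducing matters to the familiar linear-sum estimate $\sum_{|h|<N}\min\{N,\|2\alpha_2h\|^{-1}\}\ll N^{2+\eps}\Delta$.

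Then I would interpolate and match exponents. For $2\le p'\le4$ (that is, $4/3\le p<2$), H\"older against the $L^2$ and $L^4$ anchors gives $\int_0^1|G_k|^{p'}\d\beta\ll N^{p'-1+\eps}\Delta^{\mu_k(p'-2)/2}$, while for $p'\ge4$ (that is, $1\le p\le4/3$) the $L^4$ and $L^\infty$ anchors give $\int_0^1|G_k|^{p'}\d\beta\ll N^{p'-1+\eps}\Delta^{\nu_k(p'-4)+\mu_k}$. Substituting into the H\"older bound and using $1/p+1/p'=1$, the powers of $N$ collapse to $N^{1+\eps}$ in both regimes. In the first regime the exponent of $\Delta$ becomes $\mu_k(p'-2)/(2p')=\mu_k(1/p-1/2)$, which equals $2^{3-k}(1/p-1/2)$ for $k\ge3$ and $1/p-1/2$ for $k=2$, matching $\sigma_p(k)$ in the range $4/3<p<2$. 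In the second regime, the identity $\mu_k=4\nu_k$ valid for $k\ge3$ forces the exponent $(\nu_k(p'-4)+\mu_k)/p'$ to degenerate to $\nu_k=2^{1-k}$, reproducing both Theorem~\ref{theorem1.2} and the value $\sigma_p(k)=2^{1-k}$; for $k=2$ one inserts $\mu_2=1$ and $\nu_2=\tfrac12$ and finds the exponent simplifies to $1/p-1/2=\sigma_p(2)$. This yields \eqref{4.8} in every case.

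The one genuine obstacle is securing the fourth-moment bound with the sharp exponent $\mu_k$; everything else is orthogonality, the classical Weyl inequality, and interpolation. For $k\ge3$ this bound is already implicit in the proof of Theorem~\ref{theorem1.2}, and the only residual care lies in the exponent bookkeeping: checking that $\mu_k=4\nu_k$ makes the $L^4$--$L^\infty$ interpolation collapse to the pure Weyl exponent when $p\le4/3$, and that the $L^2$--$L^4$ interpolation delivers precisely the subconvex gain $\mu_k(1/p-1/2)$ required by $\sigma_p(k)$ when $4/3<p<2$.
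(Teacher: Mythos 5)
Your argument is correct and is essentially the paper's own proof reorganised: the $(p',p)$ H\"older pairing followed by $L^2$--$L^4$ interpolation of $\int_0^1|G_k(\boldsymbol\alpha,\beta)|^{p'}\d\beta$ is computation-for-computation the paper's three-factor H\"older bound $|F_k(\boldsymbol\alpha)|\le U_1^{\frac{3}{2}-\frac{2}{p}}U_2^{\frac{1}{p}-\frac{1}{2}}\bigl(\int_0^1|g(\beta)|^p\d\beta\bigr)^{1/p}$ used for $4/3<p<2$, the $L^4$--$L^\infty$ collapse via $\mu_k=4\nu_k$ reproduces the paper's appeal to Theorem \ref{theorem1.2} when $1\le p\le 4/3$, and your $k=2$ route is equivalent to the paper's direct $L^2$--$L^\infty$ (sup-based) argument, since $\mu_2=2\nu_2$ places the fourth moment exactly on that interpolation line. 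The only step needing tidying is your one-line derivation of $\mu_2=1$ from \eqref{4.7}, where the weight $\mathrm{card}\,\mathscr I(N;\mathbf{h})$ depends on $h_1$ and so requires partial summation before the linear-sum estimate can be invoked; this is avoided entirely by noting $\int_0^1|G_2(\boldsymbol\alpha,\beta)|^4\d\beta\le N\bigl(\sup_\beta|G_2(\boldsymbol\alpha,\beta)|\bigr)^2\ll N^{3+\varepsilon}\Delta$.
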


\begin{proof} We begin by noting that when $\mathscr A$ is a weakly subconvex $L^1$-set, then it is also a weakly 
subconvex $L^p$-set for any $p>1$. The conclusion of Theorem \ref{theorem1.3} for $p=1$ is obtained from that 
for $1<p<2$ by taking $p$ sufficiently close to $1$, on adjusting the value of $\varepsilon$ in the desired 
conclusion. There is consequently no loss of generality in supposing throughout that $1<p<2$ and $k\ge 2$. Next, 
we observe that in the case $k=2$ it makes little sense to perform two Weyl differencing steps, so that the proof of 
Theorem \ref{theorem1.2} is of limited use to us. In this case we nonetheless have the orthogonality relation 
\eqref{4.4}, but in present circumstances we apply H\"older's inequality to obtain the upper bound
\begin{equation}\label{4.9}
|F_k(\boldsymbol \alpha)|\le \Bigl( \int_0^1|G_k(\boldsymbol \alpha, \beta)|^{p/(p-1)}\d\beta \Bigr)^{1-1/p}
\Bigl( \int_0^1|g(\beta)|^p\d\beta \Bigr)^{1/p}.
\end{equation}
Our hypothesis that $\mathcal A$ is a weakly subconvex $L^p$-set ensures via \eqref{1.2} that
\begin{equation}\label{4.10}
\int_0^1|g(\beta)|^p\d\beta \ll N^{p-1+\varepsilon}.
\end{equation}
Moreover, since $1<p<2$, we may suppose that $p/(p-1)>2$. Hence, by orthogonality, we see that
\begin{align}
\int_0^1|G_k(\boldsymbol \alpha,\beta)|^{\frac{p}{p-1}}\d\beta &\le 
\Bigl( \sup_{\beta\in [0,1)}|G_k(\boldsymbol \alpha,\beta)|\Bigr)^{\frac{2-p}{p-1}}
\int_0^1 |G_k(\boldsymbol \alpha,\beta)|^2\d\beta \notag \\
& \le N\Bigl( \sup_{\beta\in [0,1)}|G_k(\boldsymbol \alpha ,\beta)|\Bigr)^{\frac{2-p}{p-1}}.\label{4.11}
\end{align}

\par Since $a\in \mathbb Z$ and $q\in \mathbb N$ satisfy $(a,q)=1$ and $|\alpha_k-a/q|\le q^{-2}$, it follows from 
Weyl's inequality (see \cite[Lemma 2.4]{Vau1997}) that we have the bound
\[
G_k(\boldsymbol \alpha ,\beta)\ll N^{1+\varepsilon}(q^{-1}+N^{-1}+qN^{-k})^{2^{1-k}}.
\]
Thus we see from \eqref{4.11} that
\[
\int_0^1|G_k(\boldsymbol \alpha,\beta)|^{\frac{p}{p-1}}\d\beta \ll N\cdot N^{\frac{2-p}{p-1}+\varepsilon}
(q^{-1}+N^{-1}+qN^{-k})^{\left( \frac{2-p}{p-1}\right)2^{1-k}}.
\]
On substituting this estimate together with \eqref{4.10} into \eqref{4.9}, we conclude that
\begin{align*}
F_k(\boldsymbol \alpha)&\ll N^{\frac{1}{p}+\varepsilon}(q^{-1}+N^{-1}+qN^{-k})^{\left( \frac{2}{p}-1\right)2^{1-k}}
(N^{p-1+\varepsilon})^{\frac{1}{p}}\\
&\ll N^{1+2\varepsilon}(q^{-1}+N^{-1}+qN^{-k})^{\left( \frac{2}{p}-1\right) 2^{1-k}}.
\end{align*}
Thus far, our discussion has been independent of the value of $k\ge 2$. Specialising to the situation with $k=2$, 
however, we obtain the estimate \eqref{4.8} with $\sigma_p(k)=\frac{1}{p}-\frac{1}{2}$, confirming the conclusion of 
the lemma in this case.\par

The situation with $k\ge 3$ and $1\le p\le 4/3$ is immediate from Theorem \ref{theorem1.2}, since, in such 
circumstances, one has $\sigma_p(k)=2^{1-k}$. We turn now to address the situation in which $k\ge 3$ and 
$4/3<p<2$. Here, our starting point is the relation \eqref{4.4}, though we apply H\"older's inequality in a manner 
that incorporates the $4$-th moment of $G_k(\boldsymbol \alpha,\beta)$. Thus, we obtain
\begin{equation}\label{4.12}
|F_k(\boldsymbol \alpha)|\le U_1^{\frac{3}{2}-\frac{2}{p}}U_2^{\frac{1}{p}-\frac{1}{2}}\Bigl( 
\int_0^1 |g(\beta)|^p\d\beta \Bigr)^{1/p},
\end{equation}
where
\begin{equation}\label{4.13}
U_r=\int_0^1|G_k(\boldsymbol \alpha ,\beta)|^{2r}\d\beta \quad (r\ge 1).
\end{equation}
By orthogonality, one finds from \eqref{4.3} that $U_1\le N$. Meanwhile, the argument leading from \eqref{4.7} to 
the conclusion of the proof of Theorem \ref{theorem1.2} shows that
\[
U_2\ll N^{3+\varepsilon}(q^{-1}+N^{-1}+qN^{-k})^{2^{3-k}}.
\]
Thus, on recalling the upper bound \eqref{4.10}, we deduce from \eqref{4.12} that
\begin{align*}
F_k(\boldsymbol \alpha)&\ll N^{\frac{3}{2}-\frac{2}{p}+\varepsilon}\left( N^3(q^{-1}+N^{-1}+qN^{-k})^{2^{3-k}}
\right)^{\frac{1}{p}-\frac{1}{2}}(N^{p-1})^{\frac{1}{p}}\\
&\ll N^{1+\varepsilon}(q^{-1}+N^{-1}+qN^{-k})^{\left( \frac{1}{p}-\frac{1}{2}\right)2^{3-k}}.
\end{align*}
Since $\sigma_p(k)=\bigl( \frac{1}{p}-\frac{1}{2}\bigr)2^{3-k}$ when $k\ge 3$ and $4/3<p<2$, this confirms the 
conclusion of the lemma in this final case. 
\end{proof}

\section{Weyl sums over weakly subconvex $L^p$-sets, II}
We next shift attention to the application of Vinogradov's methods for bounding exponential sums, though now 
with the variables restricted to subconvex $L^p$-sets. Here, we adopt an approach that differs from that more 
familiarly presented in work, for example, of Vaughan \cite[\S5.2]{Vau1997}.

\begin{lemma}\label{lemma5.1}
Suppose that $\mathscr A$ is a weakly subconvex $L^p$-set for some real number $p$ with $1\le p<2$, and 
further that $k\ge 3$. Let $(\alpha_0,\alpha_1,\ldots ,\alpha_k)\in \mathbb R^{k+1}$, and suppose that 
$a\in \mathbb Z$ and $q\in \mathbb N$ satisfy $(a,q)=1$ and $|\alpha_k-a/q|\le q^{-2}$. Then, for each 
$\varepsilon>0$ and each large real number $N$, one has
\begin{equation}\label{5.1}
\sum_{n\in \mathscr A(N)}e(\alpha_kn^k+\ldots +\alpha_1n+\alpha_0)\ll 
N^{1+\varepsilon}(q^{-1}+N^{-1}+qN^{-k})^{\tau_p(k)}.
\end{equation}
\end{lemma}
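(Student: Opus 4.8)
The plan is to mirror the transference argument of Lemma \ref{lemma4.1}, but to feed in bounds coming from the resolution of the main conjecture in Vinogradov's mean value theorem rather than from iterated Weyl differencing. As in Lemma \ref{lemma4.1} I may assume $1<p<2$, recovering the case $p=1$ by letting $p\to 1+$ and adjusting $\varepsilon$ (a weakly subconvex $L^1$-set being weakly subconvex $L^p$ for every $p>1$, and $\tau_p(k)$ being constant on $1\le p\le\frac{k^2-k}{k^2-k-1}$). Writing $m=p/(p-1)>2$ for the conjugate exponent, I start from the fundamental relation \eqref{4.4} and apply H\"older's inequality in the shape
\[
|F_k(\boldsymbol\alpha)|\le \Bigl(\int_0^1|G_k(\boldsymbol\alpha,\beta)|^{m}\d\beta\Bigr)^{1/m}\Bigl(\int_0^1|g(\beta)|^{p}\d\beta\Bigr)^{1/p}.
\]
The hypothesis that $\mathscr A$ is a weakly subconvex $L^p$-set gives, via \eqref{1.2}, the bound $\int_0^1|g(\beta)|^p\d\beta\ll N^{p-1+\varepsilon}$, so the second factor is $\ll N^{1-1/p+\varepsilon}$. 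Everything therefore reduces to estimating the $m$-th moment of $G_k(\boldsymbol\alpha,\beta)$ over the linear variable $\beta$.

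The crux is the mean value estimate
\[
\int_0^1|G_k(\boldsymbol\alpha,\beta)|^{k(k-1)}\d\beta\ll N^{k^2-k-1+\varepsilon}\bigl(q^{-1}+N^{-1}+qN^{-k}\bigr),
\]
at the sub-critical exponent $2s=k(k-1)$, and I expect this to be the main obstacle. It cannot be obtained merely by inserting the pointwise Weyl bound $\sup_\beta|G_k(\boldsymbol\alpha,\beta)|\ll N^{1+\varepsilon}(q^{-1}+N^{-1}+qN^{-k})^{1/(k(k-1))}$ into the second moment $\int_0^1|G_k|^2\d\beta=N$: that route yields only the exponent $(k^2-k-2)/(k^2-k)$ on $q^{-1}+N^{-1}+qN^{-k}$, which falls short of the exponent $1$ needed at the endpoint. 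Instead one must exploit genuine cancellation in the mean value. The plan here is to carry out a translation--dilation (Weyl shift) argument: replacing $n$ by $n+h$ and averaging over $h$ in a suitable range reinstates an averaging over the intermediate coefficients $\alpha_2,\ldots,\alpha_{k-1}$ while leaving the leading coefficient $\alpha_k$ untouched, thereby reducing matters to the critical mean value $J_{\binom{k}{2},k}(N)\ll N^{\binom{k}{2}+\varepsilon}$ supplied by the main conjecture (as in \cite{BDG2016, Woo2019}), with the saving $q^{-1}$ emerging from the Diophantine approximation $|\alpha_k-a/q|\le q^{-2}$. The diagonal contribution accounts for the favourable exponent $k^2-k-1$ of $N$, obtained simultaneously with the $q$-saving.

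Granted this estimate, I would interpolate against the trivial second moment $\int_0^1|G_k|^2\d\beta=N$ by H\"older's inequality to deduce, for $2\le m\le k(k-1)$,
\[
\int_0^1|G_k(\boldsymbol\alpha,\beta)|^{m}\d\beta\ll N^{m-1+\varepsilon}\bigl(q^{-1}+N^{-1}+qN^{-k}\bigr)^{(m-2)/(k^2-k-2)}.
\]
When $\frac{k^2-k}{k^2-k-1}<p<2$ one has $2<m=p/(p-1)<k(k-1)$, and substituting this bound together with the estimate for the $g$-factor into the displayed H\"older inequality produces $N^{1+\varepsilon}$ times $(q^{-1}+N^{-1}+qN^{-k})$ raised to the power $(m-2)/(m(k^2-k-2))=(\tfrac2p-1)/(k^2-k-2)=\tau_p(k)$, as required. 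When $1\le p\le \frac{k^2-k}{k^2-k-1}$ one instead uses the endpoint exponent $m=k(k-1)$ directly; here the conjugate exponent $\frac{k^2-k}{k^2-k-1}$ is at least $p$, so a trivial interpolation bounds the corresponding moment of $g$ by $N^{1/(k(k-1))+\varepsilon}$, and one obtains the constant exponent $\tau_p(k)=\frac{1}{k(k-1)}$. In both ranges the powers of $N$ combine to give exactly $N^{1+\varepsilon}$, which completes the argument.
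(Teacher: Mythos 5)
Your transference framework is exactly the paper's: the H\"older step from \eqref{4.4} with conjugate exponent $m=p/(p-1)$, the bound \eqref{4.10} for the $g$-factor, and your interpolation of the $m$-th moment of $G_k$ between the second moment and the $k(k-1)$-th moment are arithmetically identical to the paper's single three-factor application of H\"older's inequality in \eqref{5.8} (your exponents $\theta/m$ and $(1-\theta)/m$ are the paper's $\omega_1$ and $\omega_r$, and both yield $\tau_p(k)$ after the same computation). Your handling of the range $1\le p\le (k^2-k)/(k^2-k-1)$ via the endpoint exponent, and your observation that inserting the pointwise Vinogradov--Weyl bound into the second moment loses the endpoint saving, are also correct and match the paper's reasoning.

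The genuine gap is that the estimate you yourself flag as the main obstacle, namely the paper's \eqref{5.7},
\[
\int_0^1\bigl|G_k(\boldsymbol\alpha,\beta)\bigr|^{k(k-1)}\d\beta\ll N^{k^2-k-1+\varepsilon}\bigl(q^{-1}+N^{-1}+qN^{-k}\bigr),
\]
is never proved, and the plan you sketch for it breaks down at the decisive point. You propose to average over the coefficients of degrees $2,\ldots,k-1$, ``leaving the leading coefficient $\alpha_k$ untouched.'' If one kills all degrees $1,\ldots,k-1$ in the shifted variable, the surviving phase $\alpha_k\sum_i(h_i^k-h_{r+i}^k)$ is independent of the shift origin $n$, so the $n$-sum contributes only a trivial factor $N$ and one is left needing cancellation in $\sum_m\rho'(m)e(\alpha_k m)$, where $\rho'(m)$ counts solutions of the degree-$(k-1)$ Vinogradov system with prescribed $k$-th power sum; Vinogradov's mean value theorem supplies no such cancellation, and without a free summation variable there is no $\min\{N,\|\cdot\|^{-1}\}$ factor to which a reciprocal-sums lemma could be applied, so the $q$-saving cannot be extracted this way. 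The paper's proof is organised precisely to avoid this trap: after the shift and H\"older (see \eqref{5.3} and \eqref{5.4}), auxiliary orthogonality is introduced only in the coefficients of $h^j$ for $1\le j\le k-2$ (at the cost of the combinatorial factor $N^{\frac12(k-1)(k-2)-1}$ from summing over frequency vectors), so that \emph{both} the degree-$(k-1)$ and degree-$k$ phases survive; the leftover sum over $n$, which is linear in $n$ with coefficient $k\alpha_k\sum_i(h_i^{k-1}-h_{r+i}^{k-1})$, then produces the factor $\min\{N,\|k\alpha_km\|^{-1}\}$ with $|m|\le 2rN^{k-1}$ as in \eqref{5.6}; the counting function $\rho(m)$ is bounded by the critical mean value in degree $k-1$ (so $J_{\binom{k}{2},k-1}(N)\ll N^{\binom{k}{2}+\varepsilon}$, not degree $k$ as you write); and finally \cite[Lemma 2.2]{Vau1997} converts $\sum_m\min\{N,\|k\alpha_km\|^{-1}\}$ into $N^{k+\varepsilon}(q^{-1}+N^{-1}+qN^{-k})$. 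Since these steps are the entire content of the lemma beyond the transference already present in Lemma \ref{lemma4.1}, their absence (and the misdirected mechanism in your sketch) constitutes a genuine gap.
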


\begin{proof} Just as in the proof of Lemma \ref{lemma4.1}, there is no loss of generality in supposing throughout 
that $1<p<2$. Furthermore, in view of the definition of $\tau_p(k)$, we may suppose in addition that $k\ge 3$. 
Moreover, we again have the upper bounds \eqref{4.9} and \eqref{4.10} employed in the course of the proof of 
Lemma \ref{lemma4.1}. We augment these estimates with a new mean value estimate. Write
\begin{equation}\label{5.2}
r=\tfrac{1}{2}k(k-1).
\end{equation}
It transpires that we are able to bound $|F_k(\boldsymbol \alpha)|$ in terms of the mean value $U_r$, defined as in 
\eqref{4.13}. It is this mean value to which we apply Vinogradov's methods.\par

Observe first that from \eqref{4.3}, one has
\[
G_k(\boldsymbol \alpha ,\beta)=
\frac{1}{N}\sum_{1\le n\le N}\sum_{1-n\le h\le N-n}e(\psi(n+h;\boldsymbol \alpha)+(n+h)\beta).
\]
Thus, by H\"older's inequality, one sees that
\begin{equation}\label{5.3}
|G_k(\boldsymbol \alpha,\beta)|^{2r}\le \frac{1}{N}\sum_{1\le n\le N}|E_k(\boldsymbol \alpha,\beta;n)|^{2r},
\end{equation}
in which we write
\[
E_k(\boldsymbol \alpha,\beta;n)=\sum_{1-n\le h\le N-n}e(\psi(n+h;\boldsymbol \alpha)+h\beta).
\]
Next, write
\[
\varphi(h;\boldsymbol \beta)=\beta_{k-2}h^{k-2}+\ldots +\beta_1h,
\]
and put
\[
D_k(\boldsymbol \alpha ,\boldsymbol \beta;n)=\sum_{1-n\le h\le N-n}
e(\psi(n+h;\boldsymbol \alpha)+\varphi(h;\boldsymbol \beta)).
\]
Then we see via orthogonality that
\[
\int_0^1|E_k(\boldsymbol \alpha ,\beta;n)|^{2r}\d\beta =\sum_{|m_2|\le 2rN^2}\cdots \sum_{|m_{k-2}|\le 2rN^{k-2}}
\int_{[0,1)^{k-2}}|D_k(\boldsymbol \alpha ,\boldsymbol \beta;n)|^{2r}
e(-\boldsymbol \beta\cdot \bfm)\d\boldsymbol \beta ,
\]
where we abbreviate $\beta_2m_2+\ldots +\beta_{k-2}m_{k-2}$ to $\boldsymbol \beta\cdot \bfm$. An application 
of the triangle inequality therefore propels us from \eqref{5.3} to the bound
\begin{equation}\label{5.4}
\int_0^1|G_k(\boldsymbol \alpha,\beta)|^{2r}\d\beta \ll \frac{1}{N}\sum_{1\le n\le N}N^{\frac{1}{2}(k-1)(k-2)-1}
\int_{[0,1)^{k-2}}|D_k(\boldsymbol \alpha ,\boldsymbol \beta;n)|^{2r}\d\boldsymbol \beta .
\end{equation}

\par By orthogonality, the mean value on the right hand side of \eqref{5.4} is equal to
\[
\sum_{1-n\le h_1\le N-n}\cdots \sum_{1-n\le h_{2r}\le N-n}e(\sigma(n,\bfh;\boldsymbol \alpha)),
\]
in which we write
\[
\sigma(n,\bfh;\boldsymbol \alpha)=\sum_{i=1}^r \left( \psi(n+h_i;\boldsymbol \alpha)-\psi(n+h_{r+i};\boldsymbol 
\alpha)\right) ,
\]
and the summation over $h_1,\ldots ,h_{2r}$ is subject to the condition
\[
\sum_{i=1}^r(h_i^j-h_{r+i}^j)=0\quad (1\le j\le k-2).
\]
By expanding the polynomial $\psi(n+h;\boldsymbol \alpha)$ using the binomial theorem, moreover, we see that 
for each summand $\bfh$, we have
\[
\sigma (n,\bfh;\boldsymbol \alpha)=\alpha_k\sum_{i=1}^r(h_i^k-h_{r+i}^k)+(kn\alpha_k+\alpha_{k-1})\sum_{i=1}^r
(h_i^{k-1}-h_{r+i}^{k-1}).
\]
Hence, with the same implicit condition on $h_1,\ldots ,h_{2r}$, we deduce that
\begin{align}
\biggl| \sum_{1\le n\le N}\sum_{1-n\le h_1\le N-n}&\cdots \sum_{1-n\le h_{2r}\le N-n}
e(\sigma(n,\bfh;\boldsymbol \alpha))\biggr| \notag \\
&\le \sum_{|h_1|\le N}\cdots \sum_{|h_{2r}|\le N}\biggl| \sum_{n\in \mathscr I(\bfh)}
e\biggl( kn\alpha_k\sum_{i=1}^r(h_i^{k-1}-h_{r+i}^{k-1})\biggr) \biggl|,\label{5.5}
\end{align}
where $\mathscr I(\bfh)$ denotes the set of integers $n$ in the interval $[1,N]$ satisfying the property that
\[
1-h_i\le n\le N-h_i\quad (1\le i\le 2r).
\]

\par Since the set $\mathscr I(\bfh)$ is a subinterval of the integers lying in $[1,N]$, it follows in a standard manner 
that by summing the implicit geometric progression, one has
\[
\sum_{n\in \mathscr I(\bfh)}e\biggl( kn\alpha_k\sum_{i=1}^r(h_i^{k-1}-h_{r+i}^{k-1})\biggr) \ll \min\biggl\{
N,\biggl\| k\alpha_k\sum_{i=1}^r (h_i^{k-1}-h_{r+i}^{k-1})\biggr\|^{-1}\biggr\} .
\]
By substituting this bound into \eqref{5.5}, and thence into \eqref{5.4}, we conclude thus far that
\begin{equation}\label{5.6}
\int_0^1|G_k(\boldsymbol \alpha ,\beta)|^{2r}\d\beta \ll N^{\frac{1}{2}(k-1)(k-2)-2}\sum_{|m_{k-1}|\le 2rN^{k-1}}
\rho(m_{k-1})\min\{ N,\|km\alpha_k\|^{-1}\},
\end{equation}
where we write $\rho(m)$ for the number of solutions of the system of equations
\begin{align*}
\sum_{i=1}^r(h_i^{k-1}-h_{r+i}^{k-1})&=m,\\
\sum_{i=1}^r(h_i^j-h_{r+i}^j)&=0\quad (1\le j\le k-2),
\end{align*}
with $|h_i|\le N$ $(1\le i\le 2r)$.\par

A standard application of the recent resolution of the main conjecture on Vinogradov's mean value theorem (see 
\cite{BDG2016}, \cite{Woo2016}, \cite{Woo2019}) shows via orthogonality that
\begin{align*}
\rho(m)&=\int_{[0,1)^{k-1}}\biggl| \sum_{|h|\le N}e(\gamma_1h+\ldots +\gamma_{k-1}h^{k-1})\biggr|^{2r}
e(-\gamma_{k-1}m)\d\boldsymbol \gamma \\
&\ll 1+\int_{[0,1)^{k-1}}\biggl| \sum_{1\le h\le N}e(\gamma_1h+\ldots +\gamma_{k-1}h^{k-1})\biggr|^{2r}
\d\boldsymbol \gamma \\
&\ll N^{r+\varepsilon}+N^{2r-\frac{1}{2}k(k-1)}.
\end{align*}
Thus, we infer from \eqref{5.2} and  \eqref{5.6} that
\[
\int_0^1 |G_k(\boldsymbol \alpha ,\beta)|^{2r}\d\beta \ll N^{2r-k-1+\varepsilon}\sum_{|m_{k-1}|\le 2rN^{k-1}}
\min\{ N,\|km\alpha_k\|^{-1}\} .
\]
A standard reciprocal sums lemma (see, for example \cite[Lemma 2.2]{Vau1997}) leads from here to the estimate
\begin{equation}\label{5.7}
\int_0^1 |G_k(\boldsymbol \alpha,\beta)|^{2r}\d\beta \ll N^{2r-1+\varepsilon}(q^{-1}+N^{-1}+qN^{-k}).
\end{equation}

\par We may now return to combine \eqref{5.7} with the bounds \eqref{4.9} and \eqref{4.10} already established. 
Observe first that when $p/(p-1)=2r$, which is to say that $p=2r/(2r-1)$, the estimate \eqref{5.7} may be substituted 
into \eqref{4.9} along with \eqref{4.10} to give
\begin{align*}
F_k(\boldsymbol \alpha)&\ll N^\varepsilon \left( N^{2r-1}(q^{-1}+N^{-1}+qN^{-k})\right)^{1/(2r)}(N^{p-1})^{1/p}\\
&\ll N^{1+\varepsilon}(q^{-1}+N^{-1}+qN^{-k})^{1/(2r)}.
\end{align*}
Since any weakly subconvex $L^q$-set with $q<2r/(2r-1)$ is also a weakly subconvex $L^p$-set with $p=2r/(2r-1)$, 
and
\[
\frac{2r}{2r-1}=1+\frac{1}{k^2-k-1},
\]
this establishes \eqref{5.1} when $k\ge 3$ and $1\le p\le 1+1/(k^2-k-1)$.\par

The situation with $1+1/(k^2-k-1)<p<2$ requires more care. Applying H\"older's inequality to \eqref{4.4}, we 
obtain the upper bound
\begin{equation}\label{5.8}
|F_k(\boldsymbol \alpha)|\le U_1^{\omega_1}U_r^{\omega_r}\biggl( \int_0^1|g(\beta )|^p\d\beta \biggr)^{1/p},
\end{equation} 
where $U_1$ and $U_r$ are defined as in \eqref{4.13}, and
\[
\omega_r=\Bigl( \frac{2}{p}-1\Bigr)\frac{1}{k^2-k-2}\quad \text{and}\quad \omega_1=1-\frac{1}{p}-\omega_r.
\]
Notice here that since
\[
\frac{k^2-k}{k^2-k-1}<p<2,
\]
one has
\[
0<\frac{2}{p}-1<\frac{k^2-k-2}{k^2-k}\quad \text{and}\quad \frac{1}{k^2-k}<1-\frac{1}{p}<1.
\]
It follows that $\omega_r<1-1/p$, whence our application of H\"older's inequality was indeed legitimate.\par

The bound $U_1\le N$ again follows via orthogonality, while the bound \eqref{5.7} supplies an estimate for $U_r$. 
Consequently, on substituting these estimates along with \eqref{4.10} into \eqref{5.8}, we deduce that
\begin{align*}
F_k(\boldsymbol \alpha)&\ll N^\varepsilon (N)^{\omega_1}(N^{2r-1}(q^{-1}+N^{-1}+qN^{-k}))^{\omega_r}
(N^{p-1})^{1/p}\\
&\ll N^{1+\varepsilon}(q^{-1}+N^{-1}+qN^{-k})^{\omega_r}.
\end{align*}
Since $\tau_p(k)=\omega_r$ in present circumstances, this completes the proof of the lemma.
\end{proof}

The proof of Theorem \ref{theorem1.3} is completed by combining the conclusions of Lemmata \ref{lemma4.1} and 
\ref{lemma5.1}. The former addresses the situations with $k\ge 2$ when $\sigma_p(k)\ge \tau_p(k)$, and the latter 
addresses those scenarios in which $k\ge 3$ and $\tau_p(k)\ge \sigma_p(k)$.\par

Enthusiastic readers might choose to entertain themselves by adapting these methods to show that when $k\ge 6$ 
and $1\le p\le 4/3$, one has an analogue of Heath-Brown's refinement of Weyl's inequality for weakly subconvex 
$L^p$-sets $\mathscr A$. Thus, when $\alpha\in \mathbb R$, and $a\in \mathbb Z$ and $q\in \mathbb N$ satisfy 
$(a,q)=1$ and $|\alpha -a/q|\le q^{-2}$, one may adapt the proof of \cite[Theorem 1]{HB1988} to show that
\[
\sum_{n\in \mathscr A(N)}e(\alpha n^k)\ll N^{1+\varepsilon}(N(q^{-1}+N^{-3}+qN^{-k}))^{\frac{4}{3}2^{-k}}.
\]

\section{Equidistribution and subconvex $L^p$-sets}
In order to explore the topic of equidistribution of polynomial sequences with arguments from subconvex 
$L^p$-sets, we apply Weyl's criterion. Recall that the sequence $(a_n)_{n=1}^\infty$ is equidistributed modulo $1$ 
if and only if for any integer $m\ne 0$, we have
\[
\lim_{N\rightarrow \infty}\frac{1}{N}\biggl| \sum_{n=1}^N e(ma_n)\biggr|=0.
\]
Define the polynomial $\psi(x;\boldsymbol \alpha)$ as in \eqref{1.6} with fixed coefficients 
$\boldsymbol \alpha\in \mathbb R^{k+1}$. Then as a particular consequence of work of Weyl \cite{Wey1916}, the 
sequence $(\psi(n;\boldsymbol\alpha))_{n=1}^\infty$ is equidistributed modulo $1$ whenever one at least of the 
coefficients $\alpha_2,\ldots ,\alpha_k$ is irrational. In these circumstances, we see that for each non-zero integer 
$m$, one has
\[
\biggl| \sum_{n=1}^Ne(m\psi(n;\boldsymbol \alpha))\biggr|=o(N).
\]
Our objective in this section is to establish an analogue of this last conclusion in which the summands $n$ are 
restricted to a subconvex $L^p$-set. We begin with the proof of Theorem \ref{theorem1.4}, which concerns strongly 
subconvex $L^p$-sets.

\begin{proof}[The proof of Theorem \ref{theorem1.4}]
Let $\mathscr A=\{a_1,a_2,\ldots\}$, with $a_1<a_2<\ldots $, be a strongly subconvex $L^p$-set with $1\le p<2$. In 
order to facilitate tidier notation, we make a modest preliminary manoeuvre. Since $\mathscr A$ has positive lower 
density in $\mathbb N$, we see that there exists a positive number $c=c(\mathscr A)$ having the property that 
when $M$ is large, there exists an integer $N$ with $N\le cM$ for which one has 
$\{a_1,a_2,\ldots ,a_M\}=\mathscr A(N)$. Next, let $k\ge 2$, and define $\psi(n;\boldsymbol \alpha)$ according to 
\eqref{1.6}. The hypotheses of the theorem permit us the assumption that one at least of the coefficients 
$\alpha_2,\ldots ,\alpha_k$ is irrational. We may therefore suppose that there is an integer $h$ with $2\le h\le k$ for 
which $\alpha_k,\ldots ,\alpha_{h+1}$ are all rational, and $\alpha_h$ is irrational. We may suppose in addition that 
$r$ is a natural number having the property that $r\alpha_j\in \mathbb Z$ $(h+1\le j\le k)$. In this context, if $h=k$, 
then we set $r=1$.\par

According to Weyl's criterion, the sequence $(\psi(a_n;\boldsymbol \alpha))_{n=1}^\infty$ is equidistributed 
modulo $1$ if and only if for any integer $m\ne 0$, one has
\[
\lim_{M\rightarrow \infty}\frac{1}{M}\biggl| \sum_{\nu=1}^M e(m\psi(a_\nu;\boldsymbol \alpha))\biggr|=0.
\]
The latter holds if and only if
\[
\lim_{N\rightarrow \infty}\frac{1}{N}\biggl| \sum_{n\in \mathscr A(N)}e(m\psi(n;\boldsymbol \alpha))\biggr|=0.
\]
Thus, in the notation introduced in \eqref{4.1}, our goal is to show that for any integer $m\ne 0$, one has
\begin{equation}\label{6.1}
|F_k(m\boldsymbol \alpha;N)|=o(N).
\end{equation}
Applying the notation \eqref{4.2} and \eqref{4.3} employed in the course of the proof of Theorem \ref{theorem1.2}, 
we may infer from \eqref{4.4} that
\[
F_k(m\boldsymbol \alpha;N)=\int_0^1 G_k(m\boldsymbol \alpha,\beta)g(-\beta)\d\beta .
\]
An application of H\"older's inequality reveals that
\[
|F_k(m\boldsymbol \alpha;N)|\le \Bigl( \sup_{\beta\in [0,1)}|G_k(m\boldsymbol \alpha,\beta)|\Bigr)^{\frac{2}{p}-1}
\Bigl( \int_0^1|G_k(m\boldsymbol \alpha,\beta)|^2\d\beta\Bigr)^{1-\frac{1}{p}}\Bigl( \int_0^1|g(\beta)|^p\d\beta 
\Bigr)^{\frac{1}{p}}.
\]
Since $\mathscr A$ is a strongly subconvex $L^p$-set with $1\le p<2$, we have
\[
\int_0^1|g(\beta)|^p\d\beta \ll N^{p-1}.
\]
Then it follows via orthogonality that
\begin{equation}\label{6.2}
F_k(m\boldsymbol \alpha;N)\ll (N)^{1-\frac{1}{p}}(N^{p-1})^{\frac{1}{p}}\Bigl( \sup_{\beta\in [0,1)}
|G_k(m\boldsymbol \alpha,\beta)|\Bigr)^{\frac{2}{p}-1}.
\end{equation}
Thus, we may infer that the estimate \eqref{6.1} holds whenever one has
\begin{equation}\label{6.3}
\sup_{\bet\in [0,1)}|G_k(m\boldsymbol \alpha,\beta)|=o(N).
\end{equation}

\par We establish \eqref{6.3} in a fairly routine manner via a sequence of intermediate steps. We begin by 
considering an auxiliary polynomial
\[
\omega(t;\boldsymbol \beta)=\beta_ht^h+\ldots +\beta_1t+\beta_0,
\]
and the associated exponential sum
\[
U(\boldsymbol \beta ;N)=\sum_{1\le n\le N}e(\omega (n;\boldsymbol \beta)).
\]
Suppose that $\beta_h$ is irrational. We claim that for any fixed value of $\eta>1$, there exists a natural number 
$N_0$ having the property that
\begin{equation}\label{6.4}
\sup_{\beta_0,\ldots ,\beta_{h-1}}|U(\boldsymbol \beta;N_0)|<N_0/\eta.
\end{equation}
We establish this claim by contradiction. Suppose, if possible, that for all large values of $N$, one has
\[
\sup_{\beta_0,\ldots ,\beta_{h-1}}|U(\boldsymbol \beta;N)|\ge N/\eta.
\]
Then it follows from an appropriate version of Weyl's inequality (see \cite[Theorem 5.1]{Bak1986}) that there exist 
$a\in \mathbb Z$ and $q\in \mathbb N$ with $(a,q)=1$,
\[
q<\eta^hN^{1/3}\quad \text{and}\quad |q\beta_h-a|<\eta^hN^{1/3-h}.
\]
For each large natural number $Q$, we fix $N=\lfloor (Q/\eta^h)^3\rfloor$. Then we find that one has $q<Q$ and 
$|q\beta_h-a|<Q^{-3/2}$. Since these inequalities hold for all large values of $Q$, it follows via a consideration of 
the continued fraction convergents to $\beta_h$ that this coefficient must be rational (compare 
\cite[Lemma 2.7]{LLW2025}). This deduction contradicts our assumption that $\beta_h$ is irrational, and so we are 
forced to conclude that our claim \eqref{6.4} is indeed valid.\par

Now we return to consider the exponential sum $G_k(m\boldsymbol \alpha,\beta)$ and the bound \eqref{6.3} that 
we seek to establish. Let $\eta>1$ be arbitrary. Put $\beta_h=mr^h\alpha_h$. Then since 
$r\alpha_j\in \mathbb Z$ $(h+1\le j\le k)$, we see that
\[
m\psi(x+ry;\boldsymbol \alpha)+\beta (x+ry)\equiv \beta_hy^h+\ldots +\beta_1y+\beta_0\mmod{1},
\]
for some real numbers $\beta_0,\ldots ,\beta_{h-1}$ depending at most on $r$, $m$, $x$, $\beta$ and 
$\boldsymbol \alpha$. Notice that since $\alpha_h$ is presumed to be irrational, then so too is $\beta_h$. Let 
$N_0$ be the natural number furnished in association with the upper bound \eqref{6.4} that we have established. 
When $N$ is large enough in terms of $r$ and $N_0$, one has
\[
\biggl| \sum_{1\le n\le N}e(m\psi(n;\boldsymbol \alpha)+\beta n)-\frac{1}{N_0}\sum_{1\le x\le N}
\sum_{1\le y\le N_0}e(m\psi(x+ry;\boldsymbol \alpha)+\beta (x+ry))\biggr| \le 2rN_0.
\]
Then we deduce from \eqref{6.4} that
\begin{align*}
\frac{1}{N}\sup_{\beta\in [0,1)}|G_k(m\boldsymbol \alpha,\beta)|&\le \frac{1}{N_0}\sup_{\beta_0,\ldots ,\beta_{h-1}}
|U(\boldsymbol \beta;N_0)|+\frac{2rN_0}{N}\\
&<\frac{1}{\eta}+\frac{2rN_0}{N}.
\end{align*}
Therefore, for each integer $m\ne 0$, we conclude that for every real number $\eta>1$, one has
\[
\limsup_{N\rightarrow \infty}\frac{1}{N}\sup_{\beta\in [0,1)}|G_k(m\boldsymbol \alpha,\beta)|\le \frac{1}{\eta},
\]
Since this upper bound holds for all $\eta>1$, we infer that
\[
\lim_{N\rightarrow \infty}\frac{1}{N}\sup_{\beta\in [0,1)}|G_k(m\boldsymbol \alpha,\beta)|=0,
\]
and the desired conclusion \eqref{6.3} follows. We therefore conclude that \eqref{6.1} holds, and so the sequence 
$(\psi(a_n;\boldsymbol \alpha))_{n=1}^\infty$ is equidistributed modulo $1$. This completes the proof of the 
theorem.
\end{proof}

The reader might well be puzzled that the conclusion of Theorem \ref{theorem1.4} applies only to polynomials of 
the shape $\psi(t;\boldsymbol \alpha)=\alpha_kt^k+\ldots +\alpha_1t+\alpha_0$ when one at least of 
$\alpha_2,\ldots , \alpha_k$ is irrational. It transpires that this condition is essential. Were it to be the case that 
$\alpha_2,\ldots ,\alpha_k$ are rational and $\alpha_1$ is irrational, then it is possible that the sequence 
$(\psi(a_n;\boldsymbol \alpha))_{n=1}^\infty$ is not equidistributed modulo $1$. Consider, for example, an 
irrational number $\theta\in (0,1)$ of finite Diophantine type and the set
\[
\mathscr A=\{ \lfloor n/\theta\rfloor : n\in \mathbb N\}.
\]
On the one hand, this Beatty sequence $\{a_1,a_2,\ldots \}$, with $a_1<a_2<\ldots $, has the property that 
$\{ a_n\theta\}\in (1-\theta,1)$ for each $n\in \mathbb N$. On the other hand, as a consequence of Theorem 
\ref{theorem2.3}, we see that $\mathscr A$ is a strongly subconvex $L^p$-set whenever $p>1$. It is therefore not 
possible that the sequence $(a_n\theta)_{n=1}^\infty$ is equidistributed modulo $1$. A modest adjustment of this 
argument would deliver the same conclusion for the polynomial $\alpha_kt^k+\ldots +\alpha_1t+\alpha_0$ when 
$\alpha_k,\ldots ,\alpha_2$ are all rational and $\alpha_1=\theta$.\par

We finish this section by noting that a conclusion similar to Theorem \ref{theorem1.4} may be established in which 
the sequence $\mathscr A$ is assumed only to be a {\it weakly} subconvex $L^p$-set with $1\le p<2$.

\begin{theorem}\label{theorem6.1}
Suppose that $\mathscr A=\{a_1,a_2,\ldots \}$, with $a_1<a_2<\ldots $, is a weakly subconvex $L^p$-set with 
$1\le p<2$. Let $k\ge 2$, suppose that $(\alpha_0,\alpha_1,\ldots ,\alpha_k)\in \mathbb R^{k+1}$, and define the 
polynomial $\psi(x;\boldsymbol \alpha)$ as in \eqref{1.6}. Suppose in addition that one at least of the coefficients 
$\alpha_2,\ldots,\alpha_k$ is of finite Diophantine type. Then the sequence 
$(\psi(a_n;\boldsymbol \alpha))_{n=1}^\infty$ is equidistributed modulo $1$.
\end{theorem}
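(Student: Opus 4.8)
The plan is to follow the skeleton of the proof of Theorem \ref{theorem1.4}, but to replace the soft bound $\sup_\beta|G_k|=o(N)$ used there by a genuine \emph{power-saving} estimate. This strengthening is exactly what the finite Diophantine type hypothesis buys us, and it is what is needed to absorb the $N^\varepsilon$-loss inherent in the weakly subconvex bound \eqref{1.2}. First I would invoke Weyl's criterion precisely as in the proof of Theorem \ref{theorem1.4}: it suffices to show that for each non-zero integer $m$ one has $F_k(m\boldsymbol\alpha;N)=o(A(N))$, where $F_k$ is defined as in \eqref{4.1} and the relevant values of $N$ run through the terms $a_M$, so that $A(N)=M$. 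Since a weakly subconvex $L^p$-set satisfies $A(N)\gg N^{1-\varepsilon}$, it is enough to produce a \emph{fixed} power saving $F_k(m\boldsymbol\alpha;N)\ll N^{1-\delta}$ for some $\delta=\delta(k,p,m,\boldsymbol\alpha)>0$; indeed this forces $F_k(m\boldsymbol\alpha;N)/A(N)\ll N^{\eta-\delta}\to 0$ on taking $\eta$ small.

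Next I would run the H\"older argument leading to \eqref{6.2} essentially verbatim, keeping the orthogonality identity \eqref{4.4} and the bound $\int_0^1|G_k(m\boldsymbol\alpha,\beta)|^2\d\beta\le N$, but now inserting the weakly subconvex estimate \eqref{4.10} in place of its strong counterpart. This yields
\[
F_k(m\boldsymbol\alpha;N)\ll N^{2-\frac{2}{p}+\varepsilon}\Bigl(\sup_{\beta\in[0,1)}|G_k(m\boldsymbol\alpha,\beta)|\Bigr)^{\frac{2}{p}-1}.
\]
Thus the whole matter reduces to proving a power-saving bound $\sup_{\beta\in[0,1)}|G_k(m\boldsymbol\alpha,\beta)|\ll N^{1-\delta_0}$ for some fixed $\delta_0>0$: substituting this into the last display and recalling that $\tfrac{2}{p}-1>0$ produces $F_k\ll N^{1-\delta_0(2/p-1)+\varepsilon}$, and a sufficiently small choice of $\varepsilon$ then delivers the required saving $N^{1-\delta}$.

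The crux, and the step I expect to be the main obstacle, is therefore the uniform power-saving bound on $\sup_\beta|G_k(m\boldsymbol\alpha,\beta)|$. Here I would appeal to the standard minor-arc form of Weyl's inequality, a routine consequence of \cite[Lemma 2.4]{Vau1997} obtained by iterating the usual major-arc analysis: there is a constant $C=C(k)>0$ such that whenever $\bigl|\sum_{1\le n\le N}e(\gamma_kn^k+\ldots+\gamma_1n)\bigr|\ge N^{1-\sigma}$, there exists a \emph{single} $q$ with $1\le q\le N^{C\sigma}$ for which $\|q\gamma_j\|\le N^{C\sigma-j}$ holds simultaneously for all $2\le j\le k$. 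Applying this with $\gamma_j=m\alpha_j$ $(2\le j\le k)$ and $\gamma_1=m\alpha_1+\beta$, and letting $h\in\{2,\ldots,k\}$ be an index at which $\alpha_h$ is of finite Diophantine type, any $\beta$ with $|G_k(m\boldsymbol\alpha,\beta)|\ge N^{1-\sigma}$ would furnish a $q\le N^{C\sigma}$ with $\|(qm)\alpha_h\|\le N^{C\sigma-h}$. The finite-type hypothesis supplies an exponent $t$ with $\|Q\alpha_h\|\gg Q^{-t}$ for all large $Q$; combining the lower bound $\|(qm)\alpha_h\|\gg(qm)^{-t}\gg N^{-Ct\sigma}$ (with $m$ fixed) against the major-arc upper bound $\|(qm)\alpha_h\|\le N^{C\sigma-h}$ forces $N^{-Ct\sigma}\ll N^{C\sigma-h}$, that is $h\le C(t+1)\sigma+o(1)$, which is untenable once $\sigma<h/\bigl(C(t+1)\bigr)$ and $N$ is large. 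Fixing any such value of $\sigma$ yields the power saving with $\delta_0=\sigma$. The decisive feature — and the essential point of departure from the strongly subconvex case — is that this minor-arc lemma provides one common denominator $q$ serving \emph{all} of the coefficients $m\alpha_2,\ldots,m\alpha_k$ at once, so that I need exploit only the single finite-type coefficient $\alpha_h$ and may remain wholly indifferent to the arithmetic nature of the others. In particular, one avoids the substitution $n=x+ry$ used in the proof of Theorem \ref{theorem1.4}, which would demand that the coefficients above $\alpha_h$ be rational; sidestepping it is exactly what permits the finite-type coefficient to occupy any position among $\alpha_2,\ldots,\alpha_k$.
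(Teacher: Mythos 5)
Your proposal is correct and follows essentially the same route as the paper's own proof: the same H\"older bound (the paper's \eqref{6.5}), the same reduction to a uniform power-saving estimate for $\sup_\beta|G_k(m\boldsymbol\alpha,\beta)|$, and the same contradiction pitting a simultaneous-approximation form of Weyl's inequality against the finite Diophantine type of $\alpha_h$ (you even handle the passage from $o(N)$ to $o(A(N))$ more explicitly than the paper does). The only caveat is attributional: the minor-arc lemma you invoke, giving a \emph{single} denominator $q$ approximating all coefficients at once, is not a routine consequence of \cite[Lemma 2.4]{Vau1997} but is precisely \cite[Theorem 5.1]{Bak1986}, which is what the paper cites.
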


\begin{proof}
We may proceed just as in the proof of Theorem \ref{theorem1.4}, though the hypothesis that $\mathscr A$ be 
weakly subconvex instead of strongly subconvex requires us to replace \eqref{6.2} by the bound
\begin{equation}\label{6.5}
F_k(m\boldsymbol \alpha)\ll (N)^{1-\frac{1}{p}}(N^{p-1+\varepsilon})^{\frac{1}{p}}
\Bigl( \sup_{\beta\in [0,1)}|G_k(m\boldsymbol \alpha ,\beta)|\Bigr)^{\frac{2}{p}-1}.
\end{equation}
Let $\varepsilon$ be a small positive number. Then it follows from Baker \cite[Theorem 5.1]{Bak1986} that whenever 
$P>N^{1-2^{-k}}$ and $|G_k(m\boldsymbol\alpha ,\beta)|>P$, then there exist $q\in \mathbb N$ and 
$a_1,\ldots ,a_k\in \mathbb Z$ having the property that
\[
q<N^\varepsilon (NP^{-1})^k,\quad (q,a_1,\ldots ,a_k)=1,\quad (q,a_2,\ldots ,a_k)\le N^\varepsilon,
\]
\[
|qm\alpha_j-a_j|<(NP^{-1})^kN^{\varepsilon-j}\quad (2\le j\le k)
\]
and
\[
|q(m\alpha_1+\beta)-a_1|<(NP^{-1})^kN^{\varepsilon-1}.
\]

\par We may suppose that there is an index $l$ with $2\le l\le k$ for which $\alpha_l$ is of finite Diophantine type. 
Thus, for some natural number $r$ and positive number $c$, it follows that for all $a\in \mathbb Z$ one has
\begin{equation}\label{6.6}
|qm\alpha_l-a|\ge c(qm)^{-r}.
\end{equation}
We take $\varepsilon=1/(k2^kr)$ and $P=N^{1-1/(k2^kr)}$. Then we deduce from the above discussion that 
whenever
\[
|G_k(m\boldsymbol \alpha ,\beta)|>N^{1-1/(k2^kr)},
\]
there exist $q\in \mathbb N$ and $a\in \mathbb Z$ with
\[
q<N^{2/(2^kr)}=(N^{1/r})^{2^{1-k}}
\]
and
\[
|qm\alpha_l-a|<(N^{1/r})^{2^{1-k}}N^{-l}.
\]
But then we find from \eqref{6.6} that
\[
c(qm)^{-r}\le |qm\alpha_l-a|<(N^{1/r})^{2^{1-k}}N^{-l}<N^{-3/2},
\]
whence for large enough values of $N$, one sees that $q>N^{1/r}$. Thus, we deduce that
\[
N^{1/r}<q<(N^{1/r})^{2^{1-k}},
\]
leading to a contradiction. We must therefore conclude that whenever $N$ is sufficiently large in terms of $m$ and 
$\alpha_l$, one has
\[
|G_k(m\boldsymbol \alpha,\beta)|\le N^{1-1/(k2^kr)},
\]
uniformly in $\beta$. We thus deduce from \eqref{6.5} that
\[
F_k(m\boldsymbol \alpha)\ll N^{1-\left(\frac{2}{p}-1\right) /(k2^kr)},
\]
whence, for each integer $m\ne 0$, one has
\[
\lim_{N\rightarrow \infty}N^{-1}|F_k(m\boldsymbol \alpha)|=0.
\]
This shows that
\[
\lim_{N\rightarrow \infty}\frac{1}{N}\biggl| \sum_{n=1}^Ne(m\psi(a_n;\boldsymbol \alpha))\biggr|=0,
\]
so that $(\psi(a_n;\boldsymbol \alpha))_{n=1}^\infty$ is equidistributed modulo $1$. This completes the proof of 
the theorem.
\end{proof}

\section{Subconvex $L^p$-sets and arithmetic functions}
We take the opportunity in this section to discuss the application of our earlier ideas to the problem of estimating 
averages of arithmetic functions with arguments restricted to subconvex $L^p$-sets $\mathscr A$. In many 
situations of practical interest, such as when $\mathscr A$ consists of square-free numbers, for example, one may 
apply direct approaches in which the arithmetic structure of $\mathscr A$ is employed to successfully analyse such 
averages. Our purpose in this section is two-fold. On the one hand, we show that the tools available for subconvex 
$L^p$-sets provided elegantly simple approaches to the estimation of such averages in which technical tedium is 
avoided. On the other hand, our approach is quite general, so that it applies to subconvex $L^p$-sets possessing 
no evident arithmetic structure.\par

In order to describe the most general form of our conclusions, we must introduce some fairly general notation. 
Consider an arithmetic function $f:\mathbb N\rightarrow \mathbb C$, and introduce the exponential sum 
$H_f(\alpha;N)$ defined by
\[
H_f(\alpha;N)=\frac{1}{N}\sum_{1\le n\le N}f(n)e(n\alpha).
\]
Also, with the large real number $N$ suppressed from the notation, write
\[
\| H_f\|_\infty =\sup_{\alpha\in [0,1)}|H_f(\alpha ;N)|
\]
and the normalised $l^2$-norm
\[
\|f\|_{l^2(N)}=\biggl( \frac{1}{N}\sum_{1\le n\le N}|f(n)|^2\biggr)^{1/2}.
\]

\begin{theorem}\label{theorem7.1}
Suppose that $\mathscr A$ is a strongly subconvex $L^p$-set for some real number $p$ with $1\le p<2$. Let 
$f:\mathbb N\rightarrow \mathbb C$ be any arithmetic function. Then one has
\[
\frac{1}{N}\sum_{n\in \mathscr A(N)}f(n)\ll \| H_f\|_\infty^{\frac{2}{p}-1}\|f\|_{l^2(N)}^{2-\frac{2}{p}}.
\]
\end{theorem}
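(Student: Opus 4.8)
The plan is to follow the template of the abstract remarks closing the introduction, converting the strong subconvexity bound \eqref{1.3} into the asserted interpolation inequality via a single application of H\"older's inequality together with Parseval's identity. Since the introduction shows that no strongly subconvex $L^p$-set exists when $p\le 1$, the hypothesis forces $1<p<2$, and I may work throughout in this range. Writing $h(\alpha)=\sum_{1\le n\le N}f(n)e(n\alpha)=NH_f(\alpha;N)$ and $g(\alpha)=f_N(\alpha;\mathscr A)$ as in \eqref{3.2}, orthogonality yields the fundamental identity
\[
\sum_{n\in \mathscr A(N)}f(n)=\int_0^1 h(\alpha)g(-\alpha)\d\alpha ,
\]
so that the triangle inequality gives $\bigl| \sum_{n\in \mathscr A(N)}f(n)\bigr| \le \int_0^1|h(\alpha)||g(\alpha)|\d\alpha$.

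Next I would separate the two factors by H\"older's inequality with conjugate exponents $p$ and $p'=p/(p-1)$, obtaining
\[
\int_0^1|h(\alpha)||g(\alpha)|\d\alpha \le \Bigl( \int_0^1|h(\alpha)|^{p'}\d\alpha \Bigr)^{1/p'}\Bigl( \int_0^1|g(\alpha)|^p\d\alpha \Bigr)^{1/p}.
\]
The second factor is controlled directly by the defining property \eqref{1.3} of a strongly subconvex $L^p$-set: since $A(N)\le N$, one has $\int_0^1|g(\alpha)|^p\d\alpha =I_p(N;\mathscr A)\ll N^{-1}A(N)^p\le N^{p-1}$, whence this factor is $\ll N^{1-1/p}$. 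The entire problem is thereby reduced to estimating the $L^{p'}$-norm of $h$.

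For the first factor I would interpolate between the supremum and the $L^2$-norm of $h$. As $1<p<2$ forces $p'>2$, one has $\int_0^1|h|^{p'}\d\alpha \le \|h\|_\infty^{p'-2}\int_0^1|h|^2\d\alpha$, and Parseval's identity gives $\int_0^1|h|^2\d\alpha =\sum_{1\le n\le N}|f(n)|^2=N\|f\|_{l^2(N)}^2$. Raising to the power $1/p'$ and invoking the two elementary identities $1-2/p'=2/p-1$ and $1/p'=1-1/p$ then produces $\|h\|_\infty^{2/p-1}\bigl( N\|f\|_{l^2(N)}^2\bigr)^{1-1/p}$. Finally I would substitute $\|h\|_\infty=N\|H_f\|_\infty$ and collect the powers of $N$: the three surviving factors, namely $N^{2/p-1}$ from $\|h\|_\infty$, $N^{1-1/p}$ from the $L^2$-interpolation, and $N^{1-1/p}$ from the bound on $g$, combine to $N^1$, so that dividing through by $N$ leaves precisely $\|H_f\|_\infty^{2/p-1}\|f\|_{l^2(N)}^{2-2/p}$.

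There is no substantive obstacle here; the result is a clean consequence of \eqref{1.3} in the mould of the heuristic already sketched in the introduction. The only point demanding genuine care is the bookkeeping of normalisations---the factors $N^{-1}$ built into both $H_f$ and $\|\cdot\|_{l^2(N)}$---together with the verification that the exponents $1-2/p'$ and $1/p'$ conspire with the power of $N$ furnished by \eqref{1.3} so that every power of $N$ cancels after division by $N$. The constraint $p<2$, which guarantees $p'>2$ and hence legitimises the interpolation $\int|h|^{p'}\le \|h\|_\infty^{p'-2}\int|h|^2$, is the sole place where the range of $p$ enters the argument.
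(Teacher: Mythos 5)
Your proposal is correct and follows essentially the same route as the paper's proof: the orthogonality identity $\frac{1}{N}\sum_{n\in\mathscr A(N)}f(n)=\int_0^1 H_f(\alpha;N)g(-\alpha)\,{\rm d}\alpha$, H\"older's inequality, Parseval, and the bound $\int_0^1|g|^p\,{\rm d}\alpha\ll N^{p-1}$ from \eqref{1.3}. The only cosmetic difference is that the paper performs a single three-factor H\"older application (splitting $|H_f|=|H_f|^{2/p-1}|H_f|^{2-2/p}$ and extracting the supremum directly), whereas you first apply two-factor H\"older with exponents $(p,p')$ and then interpolate the $L^{p'}$-norm between $L^\infty$ and $L^2$; these are the same estimate assembled in a different order.
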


\begin{proof} Recalling our earlier notation \eqref{4.2}, we see that
\[
\frac{1}{N}\sum_{n\in \mathscr A(N)}f(n)=\int_0^1 H_f(\alpha;N)g(-\alpha)\d\alpha .
\]
Hence, by H\"older's inequality and Parseval's identity, we find that
\begin{align*}
\frac{1}{N}\biggl| \sum_{n\in \mathscr A(N)}f(n)\biggr| &\le \Bigl( \sup_{\alpha\in [0,1)}|H_f(\alpha;N)|
\Bigr)^{\frac{2}{p}-1}\Bigl( \int_0^1 |H_f(\alpha;N)|^2\d\alpha \Bigr)^{1-\frac{1}{p}}
\Bigl( \int_0^1|g(\alpha)|^p\d\alpha \Bigr)^{1/p}\\
&\ll \|H_f\|_\infty^{\frac{2}{p}-1}\left( N^{-1}\|f\|_{l^2(N)}^2\right)^{1-\frac{1}{p}}(N^{p-1})^{\frac{1}{p}},
\end{align*}
and the desired conclusion follows.
\end{proof}

We now derive some corollaries of this theorem in order to illustrate the ease with which consequences may be 
extracted within this framework. We first consider the situation in which the arithmetic function 
$f(n)$ is given by the M\"obius function $\mu(n)$.

\begin{proof}[The proof of Theorem \ref{theorem1.5}] It is a consequence of work of Davenport 
\cite[equation (1)]{Dav1937} that for any positive number $A$, one has
\[
\frac{1}{N}\sum_{1\le n\le N}\mu(n)e(n\theta)\ll \log^{-A}(2N),
\]
uniformly in $\theta$. Thus we have
\[
\|H_\mu\|_{\infty}\ll \log^{-A}(2N).
\]
By a trivial estimate, moreover, one has
\[
\|\mu\|_{l^2(N)}=\Bigl( \frac{1}{N}\sum_{1\le n\le N}\mu(n)^2\Bigr)^{1/2}\le 1.
\]
Hence, we deduce from Theorem \ref{theorem7.1} that whenever $\mathscr A$ is a strongly subconvex $L^p$-set 
for some $1\le p<2$, then
\[
\frac{1}{N}\sum_{n\in \mathscr A(N)}\mu(n)\ll \left( \log^{-A}(2N)\right)^{\frac{2}{p}-1},
\]
whence for any $B>0$, one has
\[
\sum_{n\in \mathscr A(N)}\mu(n)\ll_B N\log^{-B}(2N).
\]
This completes the proof of Theorem \ref{theorem1.5}.
\end{proof}

One can modify the summands here in a pedestrian manner. Thus, for example, one can make use of the ideas of 
Davenport \cite{Dav1937}, along the lines pursued by Hua \cite[Chapters 5 and 6]{Hua1965}, to show that for each 
$A>0$ the bound
\[
\sum_{1\le n\le N}\mu(n)e(n^k\theta+n\beta )\ll N\log^{-A}(2N)
\]
holds uniformly in $\theta$ and $\beta$. Putting $f(n)=\mu(n)e(n^k\theta)$, one then has
\[
\|H_f\|_\infty\ll \log^{-A}(2N)\quad \text{and}\quad \|f\|_{l^2(N)}\le 1,
\]
and so the argument above, mutatis mutandis, yields the following corollary.

\begin{corollary}\label{corollary7.2} Suppose that $\mathscr A$ is a strongly subconvex $L^p$-set for some real 
number $p$ with $1\le p<2$. Let $k\in \mathbb N$ and $\alpha\in \mathbb R$. Then, whenever $A>0$ and $N$ is 
sufficiently large in terms of $A$, one has
\[
\sum_{n\in \mathscr A(N)}\mu(n)e(n^k\alpha)\ll N(\log N)^{-A}.
\]
\end{corollary}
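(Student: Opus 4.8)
The plan is to recognise the sum in question as an average over $\mathscr A$ of the arithmetic function $f(n)=\mu(n)e(n^k\alpha)$, and then to apply Theorem~\ref{theorem7.1} directly. To do so I must estimate the two quantities appearing on the right-hand side of that theorem, namely $\|H_f\|_\infty$ and $\|f\|_{l^2(N)}$.

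The $l^2$-norm poses no difficulty: since $|e(n^k\alpha)|=1$, one has $|f(n)|^2=\mu(n)^2\le 1$, so that
\[
\|f\|_{l^2(N)}=\Bigl(\frac{1}{N}\sum_{1\le n\le N}\mu(n)^2\Bigr)^{1/2}\le 1.
\]
The substantive content lies in bounding $\|H_f\|_\infty$. Writing out the definition, I observe that
\[
H_f(\beta;N)=\frac{1}{N}\sum_{1\le n\le N}\mu(n)e(n^k\alpha+n\beta),
\]
so that controlling $\|H_f\|_\infty$ is precisely the problem of bounding the M\"obius-twisted exponential sum $\sum_{1\le n\le N}\mu(n)e(n^k\theta+n\beta)$ uniformly in the linear coefficient $\beta$, with the leading coefficient $\theta=\alpha$ held fixed. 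The input I would invoke is the Davenport--Hua estimate asserting that, for each $A'>0$,
\[
\sum_{1\le n\le N}\mu(n)e(n^k\theta+n\beta)\ll_{k,A'} N\log^{-A'}(2N)
\]
uniformly in $\theta$ and $\beta$; this follows by combining a combinatorial identity for $\mu$ (Vaughan's identity being the most economical) with Weyl's inequality on the minor arcs and a standard major-arc analysis, along the lines of Davenport \cite{Dav1937} as systematised by Hua \cite{Hua1965}. Granting it, one obtains $\|H_f\|_\infty\ll_{A'}\log^{-A'}(2N)$.

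It then remains only to assemble these estimates. Feeding them into Theorem~\ref{theorem7.1}, and using that $\|f\|_{l^2(N)}^{2-2/p}\le 1$ since $0\le 2-\tfrac{2}{p}<1$ throughout the range $1\le p<2$, I would deduce
\[
\frac{1}{N}\sum_{n\in\mathscr A(N)}\mu(n)e(n^k\alpha)\ll \bigl(\log^{-A'}(2N)\bigr)^{\frac{2}{p}-1}.
\]
Since $1\le p<2$ ensures $\tfrac{2}{p}-1>0$, this represents a genuine power-of-log saving. Given a prescribed $A>0$, I would simply select $A'=A/(\tfrac{2}{p}-1)$ in the Davenport--Hua input; multiplying through by $N$ then yields the claimed bound $\sum_{n\in\mathscr A(N)}\mu(n)e(n^k\alpha)\ll N(\log N)^{-A}$, the passage from $\log(2N)$ to $\log N$ costing nothing for large $N$.

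The main obstacle is entirely the uniform Davenport--Hua estimate for the M\"obius-twisted Weyl sum; once that classical input is in hand, the remainder is mere bookkeeping. The merit of this route is that it abstracts away the arithmetic of $\mathscr A$ completely: Theorem~\ref{theorem7.1} transfers any uniform $L^\infty$-saving over the full interval $[1,N]$ into a saving over the subconvex $L^p$-set $\mathscr A$, the only loss being the replacement of the exponent $A'$ by $A'(\tfrac{2}{p}-1)$.
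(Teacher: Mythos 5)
Your proposal is correct and follows exactly the paper's route: the paper likewise invokes the Davenport--Hua bound $\sum_{1\le n\le N}\mu(n)e(n^k\theta+n\beta)\ll N\log^{-A'}(2N)$ uniformly in $\theta$ and $\beta$, sets $f(n)=\mu(n)e(n^k\theta)$ so that $\|H_f\|_\infty\ll\log^{-A'}(2N)$ and $\|f\|_{l^2(N)}\le 1$, and then feeds these into Theorem~\ref{theorem7.1} with the same exponent bookkeeping. No gaps; the argument matches the paper's.
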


A modest adjustment of this example yields a conclusion useful in applications of the circle method to additive 
problems involving primes restricted to subconvex $L^p$-sets. Here, as usual, we write $\Lambda (n)$ for the 
von Mangoldt function.

\begin{corollary}\label{corollary7.3} For each $k\in \mathbb N$, there is a positive number $\sigma=\sigma(k)$ 
with the following property. Suppose that $\mathscr A$ is a strongly subconvex $L^p$-set for some real 
number $p$ with $1\le p<2$. Let $A$ be a large positive real number, and suppose that $N$ is sufficiently large in 
terms of $A$. Suppose also that $\theta$ is a real number having the property that, whenever $a\in \mathbb Z$ 
and $q\in \mathbb N$ satisfy $(a,q)=1$ and $|q\theta -a|\le \log^A(2N)N^{-1}$, then $q>\log^A(2N)$. Then one 
has
\[
\sum_{n\in \mathscr A(N)}\Lambda(n)e(n^k\theta)\ll N\log^{-B}(2N),
\]
where $B=\bigl( \frac{2}{p}-1\bigr) \sigma A-1+\frac{1}{p}$.
\end{corollary}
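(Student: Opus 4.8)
The plan is to deduce this from Theorem~\ref{theorem7.1} applied to the arithmetic function $f(n)=\Lambda(n)e(n^k\theta)$. For this choice one has
\[
H_f(\alpha;N)=\frac{1}{N}\sum_{1\le n\le N}\Lambda(n)e(n^k\theta+n\alpha),
\]
so the estimate of Theorem~\ref{theorem7.1} reduces matters to controlling the two factors $\|H_f\|_\infty$ and $\|f\|_{l^2(N)}$ on its right hand side. The second of these is immediate: since $|f(n)|^2=\Lambda(n)^2$, the Chebyshev-type bound $\sum_{1\le n\le N}\Lambda(n)^2\ll N\log N$ gives $\|f\|_{l^2(N)}\ll(\log N)^{1/2}$, and hence $\|f\|_{l^2(N)}^{2-2/p}\ll(\log N)^{1-1/p}$.

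The real content lies in bounding $\|H_f\|_\infty$, and this is where I expect the main difficulty to reside. The hypothesis imposed on $\theta$ is exactly a minor-arc condition with cutoff $P=\log^A(2N)$: it asserts that $\theta$ admits no approximation $a/q$ with $(a,q)=1$, $q\le P$ and $|q\theta-a|\le PN^{-1}$. Since $\theta$ plays the role of the leading coefficient of the degree-$k$ polynomial $n^k\theta+n\alpha$, I would invoke the classical theory of exponential sums over primes with polynomial argument, following the methods of Davenport~\cite{Dav1937} as systematised by Hua~\cite[Chapters 5 and 6]{Hua1965}. Concretely, a bilinear (Vaughan-type) decomposition of the von Mangoldt weight into Type~I and Type~II sums, followed by Weyl differencing of the resulting polynomial sums, reduces the Diophantine input to the leading coefficient $\theta$ alone. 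Because this reduction does not see the linear coefficient, one obtains a bound uniform in $\alpha$ of the shape
\[
\sup_{\alpha\in[0,1)}\biggl|\sum_{1\le n\le N}\Lambda(n)e(n^k\theta+n\alpha)\biggr|\ll N(\log N)^{-\sigma A},
\]
for a suitable positive number $\sigma=\sigma(k)$ depending only on $k$. The existence of such a $\sigma(k)$, and in particular the fact that the classical minor-arc estimate may be quoted in this clean power-of-log form uniformly in the subsidiary coefficient $\alpha$, is the principal obstacle; its verification is the only substantial step.

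Granting the bound $\|H_f\|_\infty\ll(\log N)^{-\sigma A}$, the conclusion follows by routine combination. Substituting both estimates into Theorem~\ref{theorem7.1} yields
\[
\frac{1}{N}\sum_{n\in\mathscr A(N)}\Lambda(n)e(n^k\theta)\ll(\log N)^{-\sigma A(\frac{2}{p}-1)}(\log N)^{1-\frac{1}{p}},
\]
and, on multiplying by $N$ and collecting the exponents of $\log N$, one recovers the asserted bound $N\log^{-B}(2N)$ with
\[
B=\Bigl(\frac{2}{p}-1\Bigr)\sigma A-1+\frac{1}{p}.
\]
Thus the corollary rests entirely on the uniform minor-arc estimate for the prime exponential sum, with the passage to the subconvex set $\mathscr A$ supplied cleanly by Theorem~\ref{theorem7.1}.
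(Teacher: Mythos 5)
Your proposal is correct and follows essentially the same route as the paper: both apply Theorem~\ref{theorem7.1} to $f(n)=\Lambda(n)e(n^k\theta)$, quote the Vinogradov--Hua minor-arc estimate (valid uniformly in the linear coefficient, which the paper likewise cites rather than proves) to get $\|H_f\|_\infty\ll\log^{-\sigma A}(2N)$, use $\sum_{n\le N}\Lambda(n)^2\ll N\log N$ for the $l^2$ factor, and combine the exponents to obtain $B=\bigl(\tfrac{2}{p}-1\bigr)\sigma A-1+\tfrac{1}{p}$.
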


\begin{proof} The hypotheses concerning $\theta$ ensure that the methods of Vinogradov and Hua (see 
\cite{Hua1965, Vin1947}) show that there is a positive number $\sigma=\sigma(k)$ having the property that
\[
\sum_{1\le n\le N}\Lambda (n)e(n^k\theta +\beta n)\ll N\log^{-\sigma A}(2N),
\]
uniformly in $\beta$. Thus, with $f(n)=\Lambda(n)e(n^k\theta)$, one has 
$\|H_f\|_\infty \ll \log^{-\sigma A}(2N)$
and
\[
\|f\|_{l^2(N)}\le \biggl( \frac{1}{N}\sum_{1\le n\le N}\Lambda(n)^2\biggr)^{1/2}\ll (\log (2N))^{1/2}.
\]
Hence, we deduce from Theorem \ref{theorem7.1} that
\[
\sum_{n\in \mathscr A(N)}\Lambda(n)e(n^k\theta)\ll N\left( \log^{-\sigma A}(2N)\right)^{\frac{2}{p}-1}
(\log (2N))^{1-\frac{1}{p}}.
\]
The desired conclusion now follows.
\end{proof}

By way of illustrating the potential for applications, we note that on taking $A$ sufficiently large, the conclusion of 
Corollary \ref{corollary7.3} would offer minor arc estimates of use in investigating $k$-th powers of prime numbers 
$\pi$ in which $\pi+2$ belongs to a subconvex $L^p$-set such as a Beatty sequence, or the set of squarefree 
numbers, or indeed the squarefree numbers belonging to a Beatty sequence.\par

Estimates for exponential sums with cusp form coefficients exhibit non-trivial cancellation when restricted to 
subconvex $L^p$-sets.

\begin{corollary}\label{corollary7.4}
Suppose that
\[
\sum_{n=1}^\infty a(n)e(nz)
\]
is a cusp form of weight $k>0$ for a discrete group $\Gamma$ for which $\infty$ is a cusp of width $1$. Then, 
whenever $\mathscr A$ is a strongly subconvex $L^p$-set for some real number $p$ with $1\le p<2$, one has
\[
\sum_{n\in \mathscr A(N)}a(n)\ll N^{\frac{k}{2}+1-\frac{1}{p}}(\log (2N))^{\frac{2}{p}-1}.
\]
\end{corollary}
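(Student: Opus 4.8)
The plan is to invoke Theorem \ref{theorem7.1} directly with the arithmetic function $f(n)=a(n)$, so that the entire matter reduces to supplying two classical estimates for the coefficients of a cusp form: a bound for the uniform norm $\|H_f\|_\infty$ of the associated exponential sum, and a bound for the mean square $\|f\|_{l^2(N)}$. Granting these, the conclusion follows by substitution into the inequality
\[
\frac{1}{N}\sum_{n\in \mathscr A(N)}a(n)\ll \|H_f\|_\infty^{\frac{2}{p}-1}\|f\|_{l^2(N)}^{2-\frac{2}{p}}
\]
furnished by Theorem \ref{theorem7.1}, followed by a routine bookkeeping of exponents.

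For the mean square I would appeal to the Rankin--Selberg method. Since $\infty$ is a cusp of width $1$ and $f$ has weight $k$, the Rankin--Selberg Dirichlet series $\sum_{n\ge 1}|a(n)|^2n^{-s}$ admits a meromorphic continuation whose only singularity in the relevant half-plane is a simple pole at $s=k$. A standard Tauberian argument then yields $\sum_{n\le N}|a(n)|^2\ll N^k$, whence
\[
\|f\|_{l^2(N)}=\Bigl( \frac{1}{N}\sum_{1\le n\le N}|a(n)|^2\Bigr)^{1/2}\ll N^{(k-1)/2}.
\]
As a consistency check, this accords with Hecke's bound $a(n)\ll n^{k/2}$, itself obtained by integrating the Fourier expansion of $f$ over a horizontal segment at height $y\asymp 1/n$ and using the boundedness of $y^{k/2}|f(z)|$ on the upper half-plane.

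The deeper input, and the step I expect to be the main obstacle, is the uniform exponential sum estimate of Wilton type,
\[
\sup_{\alpha\in [0,1)}\Bigl| \sum_{1\le n\le N}a(n)e(n\alpha)\Bigr| \ll N^{k/2}\log (2N).
\]
This exhibits essentially square-root cancellation relative to the mean square size $|a(n)|\approx n^{(k-1)/2}$, uniformly in $\alpha$, and its proof rests on the modular transformation behaviour of $f$ under $\Gamma$ rather than on any arithmetic feature of the coefficients. For general $\Gamma$ one argues as in Wilton's classical treatment of the Ramanujan $\tau$-function, splitting the sum according to the proximity of $\alpha$ to its rational approximations and exploiting the functional equation of the additively twisted $L$-function; the uniformity in $\alpha$ is exactly what Theorem \ref{theorem7.1} demands, and is the feature that makes this bound delicate rather than routine. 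Granting it, one has $\|H_f\|_\infty\ll N^{k/2-1}\log (2N)$.

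It then remains only to combine these estimates. Substituting into Theorem \ref{theorem7.1} gives
\[
\frac{1}{N}\sum_{n\in \mathscr A(N)}a(n)\ll \bigl( N^{\frac{k}{2}-1}\log (2N)\bigr)^{\frac{2}{p}-1}\bigl( N^{\frac{k-1}{2}}\bigr)^{2-\frac{2}{p}}.
\]
A direct computation shows that the exponent of $N$ equals $(\tfrac{k}{2}-1)(\tfrac{2}{p}-1)+\tfrac{k-1}{2}(2-\tfrac{2}{p})=\tfrac{k}{2}-\tfrac{1}{p}$, while the logarithm, entering only through $\|H_f\|_\infty$ to the first power, is raised to the power $\tfrac{2}{p}-1$. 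Multiplying through by $N$ therefore yields
\[
\sum_{n\in \mathscr A(N)}a(n)\ll N^{\frac{k}{2}+1-\frac{1}{p}}(\log (2N))^{\frac{2}{p}-1},
\]
which is precisely the claimed bound.
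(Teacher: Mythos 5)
Your proposal is correct and follows essentially the same route as the paper: both apply Theorem \ref{theorem7.1} with $f(n)=a(n)$, feed in the Rankin--Selberg mean-square bound $\|f\|_{l^2(N)}\ll N^{(k-1)/2}$ and the Wilton-type uniform bound $\|H_f\|_\infty\ll N^{k/2-1}\log(2N)$ (the paper simply cites \cite[Theorems 5.1 and 5.3]{Iwa1991} for these, where you sketch their classical proofs), and conclude by the same exponent computation.
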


\begin{proof} We apply Theorem \ref{theorem7.1} with $f(n)=a(n)$. Here, as a consequence of Iwaniec 
\cite[Theorem 5.3]{Iwa1991}, one has
\[
\|H_f\|_\infty =\sup_{\beta\in [0,1)}\biggl| \frac{1}{N}\sum_{1\le n\le N}a(n)e(\beta n)\biggr| \ll N^{-1+k/2}\log (2N),
\]
whilst \cite[Theorem 5.1]{Iwa1991} provides the bound
\[
\|f\|_{l^2(N)}=\biggl( \frac{1}{N}\sum_{1\le n\le N}|a(n)|^2\biggr)^{1/2}\ll N^{(k-1)/2}.
\]
Consequently, we see from Theorem \ref{theorem7.1} that
\begin{align*}
\sum_{n\in \mathscr A(N)}a(n)&\ll N\left( N^{\frac{k}{2}-1}\log (2N)\right)^{\frac{2}{p}-1}
\left( N^{\frac{k-1}{2}}\right)^{2-\frac{2}{p}}\\
&\ll N^{\frac{k}{2}+1-\frac{1}{p}}(\log (2N))^{\frac{2}{p}-1}.
\end{align*}
 This completes the proof of the corollary.
\end{proof}

By way of comparison, the bound \cite[Theorem 5.3]{Iwa1991} shows that
\begin{equation}\label{7.1}
\sum_{1\le n\le N}a(n)\ll N^{k/2}\log (2N),
\end{equation}
whilst by applying the trivial estimate $|a(n)|\ll n^{k/2}$, one finds that
\[
\biggl|\sum_{n\in \mathscr A(N)}a(n)\biggr|\le \sum_{1\le n\le N}|a(n)|\ll N^{\frac{k}{2}+1}.
\]
Thus, Corollary \ref{corollary7.4} saves roughly a factor $N^{1/p}$ over the trivial estimate, and as $p$ approaches 
$1$ the conclusion of Corollary \ref{corollary7.4} approaches the strength of the bound \eqref{7.1} applicable to 
unrestricted sums.\par

This circle of ideas also addresses character sums restricted to subconvex $L^p$-sets. We provide an illustration of 
the possible conclusions in this direction.

\begin{corollary}\label{corollary7.5}
Let $\chi$ be a non-principal character modulo $\pi$, with $\pi$ a prime number. Suppose that $\mathscr A$ is a 
strongly subconvex $L^p$-set for some real number $p$ with $1\le p<2$. Then, whenever $r$ is a positive integer 
exceeding $1$, one has
\[
\sum_{n\in \mathscr A(N)}\chi(n)\ll 
N\left( \pi^{\frac{1}{4r-4}}(\log \pi)^{\frac{1}{2r}}N^{-\frac{1}{r}}\right)^{\frac{2}{p}-1} .
\]
\end{corollary}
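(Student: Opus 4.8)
The plan is to apply Theorem~\ref{theorem7.1} directly with the arithmetic function $f=\chi$, thereby reducing the entire corollary to a single uniform (twisted) Burgess-type estimate. Since $\chi$ is a character one has $|\chi(n)|\le 1$, so the trivial estimate gives $\|\chi\|_{l^2(N)}\le 1$. As $1\le p<2$, the exponent $2-\tfrac{2}{p}$ is non-negative, and hence this factor contributes nothing: Theorem~\ref{theorem7.1} immediately yields
\[
\frac{1}{N}\sum_{n\in\mathscr A(N)}\chi(n)\ll\|H_\chi\|_\infty^{\frac{2}{p}-1}.
\]
Thus the whole content of the corollary is carried by the sup-norm $\|H_\chi\|_\infty=N^{-1}\sup_{\alpha\in[0,1)}|\sum_{1\le n\le N}\chi(n)e(n\alpha)|$, and the set $\mathscr A$ plays no role beyond its strong subconvexity.

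The crux, then, is the uniform bound
\[
\sup_{\alpha\in[0,1)}\Bigl|\sum_{1\le n\le N}\chi(n)e(n\alpha)\Bigr|\ll N^{1-1/r}\pi^{1/(4r-4)}(\log\pi)^{1/(2r)}
\]
for each integer $r>1$; dividing by $N$, substituting into the displayed consequence of Theorem~\ref{theorem7.1}, raising to the power $\tfrac{2}{p}-1$, and multiplying through by $N$ reproduces precisely the asserted estimate. I would establish this bound by Burgess's method, carrying the additive twist $e(n\alpha)$ along throughout the shifting and averaging argument. The key observation is that the only arithmetic input is the Weil bound for the complete sums $\sum_{t\bmod\pi}\prod_{i}\chi(t+b_i)\overline{\chi(t+b_{r+i})}$, and this is insensitive to additive twists: in the twisted $2r$-th moment the extra factor $e(\beta\sum_i(b_i-b_{r+i}))$ does not depend on $t$, equals $1$ on the diagonal where the $b_i$ pair up, and has modulus $1$ off the diagonal, so the estimate $\ll_r B^r\pi+B^{2r}\sqrt\pi$ is unchanged. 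Consequently Burgess's argument goes through once the twist is factored through the substitution $n\mapsto n+ab$.

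The main obstacle will be the derivation of this twisted Burgess bound and, in particular, arranging the optimisation so that the exponents emerge as the clean pair $\pi^{1/(4r-4)}(\log\pi)^{1/(2r)}$ rather than the sharper but $\varepsilon$-laden $\pi^{(r+1)/(4r^2)+\varepsilon}$. This forces one to treat the second-moment count of the shifting variables without recourse to a divisor bound, accepting a mild power of $\log\pi$ in exchange for an explicit and slightly larger power of $\pi$; one checks readily that $\tfrac{1}{4r-4}$ exceeds $\tfrac{r+1}{4r^2}$, so this crude form is indeed weaker than the optimal Burgess exponent and hence attainable by an $\varepsilon$-free optimisation. Everything else is soft: the reduction through Theorem~\ref{theorem7.1} and the trivial $l^2$-bound are immediate, and uniformity in $\alpha$ is automatic because both the twisted Weil input and the Burgess optimisation are independent of $\alpha$. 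I would therefore isolate the twisted Burgess estimate as the single substantive lemma and dispatch the corollary itself in two lines.
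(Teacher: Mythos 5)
Your opening reduction is precisely the paper's: the paper proves this corollary by applying Theorem \ref{theorem7.1} with $f=\chi$, noting $\|\chi\|_{l^2(N)}\le 1$, and quoting the uniform bound
\[
\|H_\chi\|_\infty \ll N^{-\frac{1}{r}}\pi^{\frac{1}{4r-4}}(\log \pi)^{\frac{1}{2r}}
\]
directly from Chamizo \cite[Theorem 1.1]{Cha2011}. The corollary is thus a three-line deduction in the paper, and your proposal stands or falls with the single item you leave to prove, namely the twisted Burgess estimate. There your sketch has a genuine gap.

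The additive twist does not simply pass through Burgess's substitution. Writing $n+ab\equiv a(a^{-1}n+b)\pmod{\pi}$, one has
\[
\chi(n+ab)\,e((n+ab)\alpha)=\chi(a)\,e(n\alpha)\,\chi(a^{-1}n+b)\,e(ab\alpha),
\]
so after the modulus-one factors $\chi(a)$ and $e(n\alpha)$ are discarded, the inner sum to be estimated is $\sum_{b\le B}\chi(t+b)e(a\alpha b)$ with $t\equiv a^{-1}n\pmod{\pi}$. This sum depends on $a$ through the frequency $a\alpha$, not only on the residue $t$. That destroys the step from which Burgess's method draws all of its strength: the collapse of the $\approx AN$ pairs $(a,n)$ onto residues $t$ with multiplicity $\nu(t)$, followed by \emph{one} computation, independent of $a$, of the complete moment $\sum_{t\bmod{\pi}}\bigl|\sum_{b\le B}\chi(t+b)\bigr|^{2r}$. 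Your observation that the Weil input is insensitive to additive twists is correct, but it repairs a step that was never in danger; the damage occurs one step earlier.

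Concretely, carrying out your plan forces you either to estimate the complete moment separately for each $a$ (costing a factor $A^{1/(2r)}$ after H\"older), or to take absolute values so that only the product $h=ab$ survives; in either case the double average degenerates into a one-variable shift argument. Optimising what remains yields a bound of the shape $N^{1-1/(2s)}\pi^{1/(4s)}$, that is, $N^{1-1/r}\pi^{1/(2r)}$ for even $r$. Since $\frac{1}{2r}>\frac{1}{4r-4}$ exactly when $r>2$, this falls strictly short of the exponent asserted in the corollary for every $r\ge 3$; the two coincide only at $r=2$. Your appeal to the fact that $\frac{1}{4r-4}$ exceeds the optimal Burgess exponent $\frac{r+1}{4r^2}$ is therefore a non sequitur: the target is weaker than \emph{untwisted} Burgess, but it is stronger than what the naively twisted argument can deliver, which is precisely why the uniform twisted bound is a published theorem rather than a remark. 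To close the gap you must either cite Chamizo, as the paper does, or supply a genuinely new ingredient, e.g.\ a rational approximation $|\alpha-c/q|\le 1/(qN)$ with shifts restricted to multiples of $q$ (so the twist is essentially invariant under the shifts and the two-variable collapse survives), combined with a completion or P\'olya--Vinogradov argument when $q$ is large; no case analysis of this kind appears in your sketch, which treats all $\alpha$ uniformly.
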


\begin{proof} We apply Theorem \ref{theorem7.1} with $f(n)=\chi(n)$. Here, as a consequence of Chamizo 
\cite[Theorem 1.1]{Cha2011}, one has
\[
\| H_f\|_\infty = \sup_{\beta\in [0,1)}\biggl| \frac{1}{N}\sum_{1\le n\le N}\chi(n)e(\beta n)\biggr| 
\ll N^{-\frac{1}{r}}\pi^{\frac{1}{4r-4}}(\log \pi)^{\frac{1}{2r}},
\]
whilst $\|f\|_{l^2(N)}\le 1$. Consequently, we find that the desired conclusion is immediate from Theorem 
\ref{theorem7.1}.
\end{proof}

Notice that the conclusion of Corollary \ref{corollary7.5} confirms that non-trivial cancellation occurs in the 
character sum
\[
\sum_{n\in \mathscr A(N)}\chi(n)
\]
whenever $N>\pi^{\frac{r}{4r-4}}(\log \pi)^{\frac{1}{2}}$. Thus, whenever $\varepsilon>0$, we may take $r$ 
sufficiently large in terms of $\varepsilon$ so as to ensure that non-trivial cancellation occurs provided only that 
$N>\pi^{1/4+\varepsilon}$.\par

We should note that, although we have restricted ourselves in this section to problems involving strongly 
subconvex $L^p$-sets, this is not an essential hypothesis. Were we to modify our hypotheses throughout to 
consider weakly subconvex $L^p$-sets, then a self-evident adaptation of our methods would yield the same upper 
bounds in every conclusion, save that an additional factor of $N^\varepsilon$ should be inserted throughout. We 
note further that the examples described above provide just some of many possible applications of the ideas within 
the orbit of this memoir. Moreover, more sophisticated strategies are suggested when information concerning 
$4$-th and higher moments of $H_f(\alpha;N)$ are available, as is often the case. The basic message to be borne 
away from this discussion is that subconvex $L^p$-sets lend themselves well to problems involving averages of 
arithmetic functions.

\section{Subconvex $L^p$-sets modulo $q$}
In certain applications of subconvex $L^p$-sets in number theory, it may be convenient to work with discrete rather 
than continuous moments, As a model example of such a scenario, consider a natural number $q$ and the 
discrete moment
\begin{equation}\label{8.1}
S_p(N;\mathscr A;q)=\frac{1}{q}\sum_{a=1}^q\biggl| \sum_{n\in \mathscr A(N)}e(na/q)\biggr|^p.
\end{equation}
Our principal result in this section is the analogue of the mean value estimate \eqref{1.3} for strongly subconvex 
$L^p$-sets $\mathscr A$ provided by Theorem \ref{theorem1.6}.

\begin{proof}[The proof of Theorem \ref{theorem1.6}]
We adjust the notation \eqref{3.2} by writing
\begin{equation}\label{8.2}
f_N(\alpha)=\sum_{n\in \mathscr A(N)}e(n\alpha).
\end{equation}
Thus, when $1<p<2$, one has
\begin{equation}\label{8.3}
|f_N(\alpha)|^p=f_N(\alpha)^{p/2}f_N(-\alpha )^{p/2}.
\end{equation}
Equipped with this relation, we seek to convert the discrete mean value $S_p(N;\mathscr A;q)$ into a continuous 
analogue by applying the Sobolev-Gallagher inequality (see \cite[Lemma 1.1]{HLM1971}). Thus, whenever 
$a\in \mathbb Z$ and $q\in \mathbb N$, we have
\[
|f_N(a/q)|^p\le q\int_{-1/(2q)}^{1/(2q)}|f_N(\beta+a/q)|^p\d\beta +
\frac{1}{2}\int_{-1/(2q)}^{1/(2q)}\Bigl| \frac{{\rm d}\,}{{\rm d}\beta} |f_N(\beta+a/q)|^p\Bigr| \d\beta .
\]
When $f_N(\beta+a/q)\ne 0$, it follows from \eqref{8.3} that
\[
\frac{{\rm d}\,}{{\rm d}\beta} |f_N(\beta+a/q)|^p=\frac{p}{2}|f_N(\beta+a/q)|^p
\biggl( \frac{f_N'(\beta+a/q)}{f_N(\beta+a/q)}-\frac{f_N'(-\beta-a/q)}{f_N(-\beta-a/q)}\biggr) ,
\]
whence
\[
\Bigl| \frac{{\rm d}\,}{{\rm d}\beta} |f_N(\beta+a/q)|^p\Bigr| \le p|f_N(\beta+a/q)|^{p-1}|f'_N(\beta+a/q)|.
\]
Since $p>1$, we therefore deduce from \eqref{8.1} that
\begin{equation}\label{8.4}
S_p(N;\mathscr A;q)\le \int_0^1|f_N(\alpha)|^p\d\alpha +
\frac{p}{2q}\int_0^1|f_N(\alpha)|^{p-1}|f'_N(\alpha)|\d\alpha .
\end{equation}

\par We must now bound mean values involving $f'_N(\alpha)$ in terms of mean values involving $f_N(\alpha)$. 
For this purpose, we draw inspiration from the argument following \cite[equation (3.2)]{Woo2015}. We observe that
\[
f_N'(\alpha)=2\pi {\rm i}\sum_{n\in \mathscr A(N)}ne(n\alpha).
\]
Recall the definition \eqref{8.1}. Then, on writing $\mathscr A(N)=\{ a_1,a_2,\ldots ,a_M\}$, with 
\[
a_1<a_2<\ldots <a_M,
\]
we see that the sum on the right hand side here may be rewritten via partial summation in the form
\begin{align*}
\sum_{m=1}^Ma_me(a_m\alpha)&=a_1f_{a_1}(\alpha)+
\sum_{m=2}^Ma_m\left( f_{a_m}(\alpha)-f_{a_{m-1}}(\alpha)\right) \\
&=a_Mf_{a_M}(\alpha)-\sum_{m=1}^{M-1}(a_{m+1}-a_m)f_{a_m}(\alpha).
\end{align*}
We therefore see that for all real numbers $\alpha$, one has
\begin{align*}
\biggl| \sum_{m=1}^Ma_me(a_m\alpha)\biggr|&\le a_M|f_{a_M}(\alpha)|+\left( \max_{1\le w\le M-1}|f_{a_w}(\alpha)|
\right) \sum_{1\le m\le M-1}|a_{m+1}-a_m|\\
&\le (a_M+(a_M-a_1))\max_{1\le w\le M}|f_{a_m}(\alpha)|\\
&\le 2N\max_{1\le n\le N}|f_n(\alpha)|.
\end{align*}
An application of the Carleson-Hunt theorem (see \cite[Theorem 1]{Hun1968}) in the form given by Bombieri 
\cite[page 12]{Bom1989} shows that
\[
\int_0^1\left( \max_{1\le n\le N}|f_n(\alpha)|\right)^p\d\alpha \ll_p \int_0^1|f_N(\alpha)|^p\d\alpha .
\]
We therefore deduce that whenever $\mathscr A$ is a strongly subconvex $L^p$-set, then one has
\begin{align}
\int_0^1|f'_N(\alpha)|^p\d\alpha &\ll N^p\int_0^1\left( \max_{1\le n\le N}|f_n(\alpha)|\right)^p\d\alpha \notag \\
&\ll N^p\int_0^1|f_N(\alpha)|^p\d\alpha \notag \\
&\ll N^{2p-1}.\label{8.5}
\end{align}

\par We now substitute the estimate \eqref{8.5} into \eqref{8.4} following an application of H\"older's inequality. 
Thus, on applying once again the hypothesis that $\mathscr A$ be a strongly subconvex $L^p$-set, we find that
\begin{align*}
S_p(N;\mathscr A;q)&\ll N^{p-1}+\frac{1}{q}\biggl( \int_0^1|f_N(\alpha)|^p\d\alpha\biggr)^{1-1/p}
\biggl( \int_0^1|f_N'(\alpha)|^p\d\alpha \biggr)^{1/p}\\
&\ll N^{p-1}+\frac{1}{q}(N^{p-1})^{1-1/p}(N^{2p-1})^{1/p}\\
&\ll N^{p-1}+N^p/q.
\end{align*}
This delivers the conclusion of the theorem.
\end{proof}  

In order to demonstrate the utility of this mean value estimate, we note an application to character sums that 
in many circumstances provides an alternative to the estimate given in Corollary \ref{corollary7.5}.

\begin{corollary}\label{corollary8.1}
Let $q$ be a large natural number, and suppose that $\chi$ is a primitive character modulo $q$. Let $p$ be a real 
number with $1<p<2$, and suppose that $\mathscr A$ is a strongly subconvex $L^p$-set. Then, one has
\[
\sum_{n\in \mathscr A(N)}\chi(n)\ll q^{1/2}N^{1-1/p}\left( 1+N/q\right)^{1/p}.
\]
\end{corollary}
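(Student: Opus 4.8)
The plan is to reduce the character sum to the discrete moment $S_p(N;\mathscr A;q)$ controlled by Theorem \ref{theorem1.6}, using the separation of a primitive character into additive characters via its Gauss sum. Recall that for a primitive character $\chi$ modulo $q$, with Gauss sum $\tau(\overline\chi)=\sum_{a=1}^q\overline\chi(a)e(a/q)$, one has the identity
\[
\chi(n)=\frac{1}{\tau(\overline\chi)}\sum_{a=1}^q\overline\chi(a)e(na/q),
\]
valid for \emph{every} integer $n$ — it is precisely primitivity that guarantees this holds without the coprimality restriction $(n,q)=1$ — together with the evaluation $|\tau(\overline\chi)|=q^{1/2}$. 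First I would insert this identity into the character sum and interchange the order of summation, so that
\[
\sum_{n\in\mathscr A(N)}\chi(n)=\frac{1}{\tau(\overline\chi)}\sum_{a=1}^q\overline\chi(a)\sum_{n\in\mathscr A(N)}e(na/q).
\]
Taking absolute values and using $|\overline\chi(a)|\le 1$ together with $|\tau(\overline\chi)|=q^{1/2}$ then yields
\[
\biggl|\sum_{n\in\mathscr A(N)}\chi(n)\biggr|\le q^{-1/2}\sum_{a=1}^q\biggl|\sum_{n\in\mathscr A(N)}e(na/q)\biggr|.
\]

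Next I would pass from this $L^1$-average over residues to the $L^p$-average supplied by Theorem \ref{theorem1.6}, by an application of H\"older's inequality with exponents $p$ and $p/(p-1)$ in the form
\[
\sum_{a=1}^q\biggl|\sum_{n\in\mathscr A(N)}e(na/q)\biggr|\le q^{1-1/p}\biggl(\sum_{a=1}^q\biggl|\sum_{n\in\mathscr A(N)}e(na/q)\biggr|^p\biggr)^{1/p}.
\]
The inner $p$-th moment is exactly $q\,S_p(N;\mathscr A;q)$ in the notation \eqref{8.1}, so Theorem \ref{theorem1.6} bounds it by $\ll q(N^{p-1}+N^p/q)=qN^{p-1}(1+N/q)$. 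Substituting this estimate and collecting the powers of $q$ and $N$ — noting that $q^{-1/2}\cdot q^{1-1/p}\cdot(qN^{p-1})^{1/p}=q^{1/2}N^{1-1/p}$ — then delivers
\[
\biggl|\sum_{n\in\mathscr A(N)}\chi(n)\biggr|\ll q^{1/2}N^{1-1/p}(1+N/q)^{1/p},
\]
which is the desired conclusion.

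I expect no serious obstacle here: the argument is a clean two-line reduction followed by H\"older's inequality and a single invocation of Theorem \ref{theorem1.6}. The one point requiring care is the use of the full Gauss-sum identity for all $n$, which is exactly where the primitivity hypothesis on $\chi$ enters; for an imprimitive character the separation formula acquires a defect term and the clean evaluation $|\tau(\overline\chi)|=q^{1/2}$ no longer holds, so the bound would degrade. Everything else is routine bookkeeping with the exponents.
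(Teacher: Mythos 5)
Your proposal is correct and is essentially the paper's own argument: both proofs expand the character sum over the additive characters $e(na/q)$ using primitivity of $\chi$ (so that the Gauss sum has modulus $q^{1/2}$), apply H\"older's inequality to reach the discrete moment $S_p(N;\mathscr A;q)$ of \eqref{8.1}, and conclude via Theorem \ref{theorem1.6}. The only cosmetic difference is that you bound $|\overline\chi(a)|\le 1$ pointwise before applying H\"older against constants, whereas the paper keeps the complete sums $T(a)$ inside H\"older and bounds $\frac{1}{q}\sum_a|T(a)|^{p/(p-1)}$ by interpolating between $|T(a)|\le\sqrt q$ and Parseval; since $|T(a)|$ equals $\sqrt q$ or $0$ for primitive $\chi$, the two bookkeepings yield identical bounds.
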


\begin{proof} We recall the notation \eqref{8.2}, and in addition define
\[
T(a)=\sum_{m=1}^q\chi(m)e(ma/q).
\]
Then, by orthogonality, one sees that
\[
\sum_{n\in \mathscr A(N)}\chi(n)=\frac{1}{q}\sum_{a=1}^qT(a)f_N(-a/q).
\]
An application of H\"older's inequality reveals, therefore, that
\begin{equation}\label{8.6}
\biggl| \sum_{n\in \mathscr A(N)}\chi(n)\biggr| \le \biggl( \frac{1}{q}\sum_{a=1}^q|f_N(a/q)|^p\biggr)^{1/p}
\biggl( \frac{1}{q}\sum_{a=1}^q|T(a)|^{p/(p-1)}\biggr)^{1-1/p}.
\end{equation}
Since we assume $\chi$ to be primitive, it follows from \cite[Theorem 9.7]{MV2007} that $|T(a)|\le \sqrt{q}$. Thus, by 
orthogonality, we see that whenever $1<p<2$, one has
\[
\frac{1}{q}\sum_{a=1}^q|T(a)|^{\frac{p}{p-1}}\le (\sqrt{q})^{\frac{2-p}{p-1}}\frac{1}{q}\sum_{a=1}^q|T(a)|^2
\le q^{\frac{p}{2p-2}}.
\]
We therefore deduce from Theorem \ref{theorem1.6} and \eqref{8.6} that
\begin{align*}
\biggl| \sum_{n\in \mathscr A(N)}\chi(n)\biggr| &\ll \left( N^{p-1}(1+N/q)\right)^{1/p}
\left( q^{p/(2p-2)}\right)^{1-1/p}\\
&\ll q^{1/2}N^{1-1/p}(1+N/q)^{1/p}.
\end{align*}
This yields the conclusion of the corollary.
\end{proof}

Notice that as $p\rightarrow 1$, the estimate provided by Corollary \ref{corollary8.1} has strength approaching the 
classical estimate of P\'olya-Vinogradov (see \cite[Theorem 9.18]{MV2007}, for example). Meanwhile, our estimate 
remains non-trivial for all values of $p$ smaller than $2$.

\bibliographystyle{amsbracket}
\providecommand{\bysame}{\leavevmode\hbox to3em{\hrulefill}\thinspace}

\end{document}